\newcommand{\mcm}[3]{\newcommand{#1}[#2]{{\ensuremath{#3}}}} 
\mcm{\tuple}{1}{\langle #1 \rangle}
\mcm{\name}{1}{\ulcorner #1 \urcorner}
\mcm{\Nbb}{0}{\mathbb{N}}
\mcm{\Zbb}{0}{\mathbb{Z}}
\mcm{\Rbb}{0}{\mathbb{R}}
\mcm{\Cbb}{0}{\mathbb{C}}
\mcm{\Qbb}{0}{\mathbb{Q}}
\mcm{\Bcal}{0}{\cal B}
\mcm{\Ccal}{0}{\cal C}
\mcm{\Dcal}{0}{\cal D}
\mcm{\Ecal}{0}{\cal E}
\mcm{\Fcal}{0}{\cal F}
\mcm{\Gcal}{0}{\cal G}
\mcm{\Hcal}{0}{\cal H}
\mcm{\Ical}{0}{\cal I}
\mcm{\Jcal}{0}{\cal J}
\mcm{\Kcal}{0}{\cal K}
\mcm{\Lcal}{0}{\cal L}
\mcm{\Mcal}{0}{\cal M}
\mcm{\Ncal}{0}{\cal N}
\mcm{\Ocal}{0}{{\cal O}}
\mcm{\Pcal}{0}{{\cal P}}
\mcm{\Qcal}{0}{{\cal Q}}
\mcm{\Rcal}{0}{{\cal R}}
\mcm{\Scal}{0}{{\cal S}}
\mcm{\Tcal}{0}{{\cal T}}
\mcm{\Ucal}{0}{{\cal U}}
\mcm{\Vcal}{0}{{\cal V}}
\mcm{\Xcal}{0}{{\cal X}}
\mcm{\Ycal}{0}{{\cal Y}}
\mcm{\Mfrak}{0}{\mathfrak M}
\mcm{\restric}{0}{\upharpoonright}
\mcm{\upset}{0}{\uparrow}
\mcm{\onto}{0}{\twoheadrightarrow}
\mcm{\smallNbb}{0}{{\small \mathbb{N}}}
\DeclareMathOperator{\preop}{op}
\mcm{\op}{0}{^{\preop}}
\newcommand{\supst}{\textsuperscript{st}}
\newcommand{\se}{\subseteq}
\newcommand{\theoremize}[2]{\newaliascnt{#1}{thm} \newtheorem{#1}[#1]{#2} \aliascntresetthe{#1}}
\theoremstyle{plain}
\newtheorem{thm}{Theorem}[section]
\theoremstyle{definition}
\theoremstyle{plain}
\newcommand{\sm}{\setminus}
\DeclareMathOperator{\supp}{supp}
\newcommand{\ct}{^\complement}
\begin{document}
\title{Infinite trees of matroids}
\author{Nathan Bowler and Johannes Carmesin}
\maketitle

\begin{abstract}
We generalise the construction of infinite matroids from trees of matroids to allow the matroids at the nodes, as well as the field over which they are represented, to be infinite.
\end{abstract}

\section{Introduction}

In 2008, Bruhn et al \cite{matroid_axioms} introduced several equivalent axiomatisations for infinite matroids, providing a foundation on which a theory of infinite matroids with duality can be built. 
We shall work with a slightly better behaved subclass of infinite matroids, called tame matroids. This class includes all finitary matroids and all the other motivating examples of infinite matroids but  
is easier to work with than the class of infinite matroids in general
\cite{THINSUMS}, \cite{BC:rep_matroids}, \cite{BC:determinacy},   \cite{BC:wild_matroids},  \cite{BC:ubiquity}, \cite{BCC:graphic_matroids}.
In \cite{BC:determinacy}, we gave a construction by means of which 
finite matroids can be stuck together to get infinite 
tame matroids. The construction of \cite{BC:determinacy} was restricted to the countable setting. In this paper, we extend it to the general setting.

\vspace{0.3cm}

A large collection of motivating examples of infinite matroids arises from locally finite graphs $G$.
First of all, two well-established matroids associated to such a graph $G$ are the  \emph{finite cycle matroid $M_{FC}(G)$}, whose circuits are the finite cycles in $G$, and the \emph{topological cycle matroid $M_{TC}(G)$}, whose circuits are the edge sets of topological circles in the topological space $|G|$ obtained from $G$ by adding the ends \cite{RD:HB:graphmatroids}. More generally, we say a tame matroid is a {\em $G$-matroid} if all of its circuits are edge sets of topological circles in $|G|$ and all of its cocircuits are bonds of $G$. Thus both $M_{FC}(G)$ and $M_{TC}(G)$ are $G$-matroids.

It turns out that any $G$-matroid $M$ is determined by a set $\Psi$ of ends of $G$, in that the circuits of $M$ are the \emph{$\Psi$-circuits}, that is, the edge sets of those topological circles that only use ends from $\Psi$ \cite{BC:ubiquity}.
Unfortunately, there are graphs $G$ and sets $\Psi$ of ends such that the set of $\Psi$-circuits is not the set of circuits of a matroid \cite{BC:determinacy}.
But this can only happen if $\Psi$ is topologically unpleasant.

\begin{thm}[\cite{BC:determinacy}]\label{ref2}
 Let $\Psi$ be a Borel set of ends of a locally finite graph $G$. Then the $\Psi$-circuits of $G$ are the circuits of a matroid. 
\end{thm}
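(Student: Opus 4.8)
The strategy is to realise the prospective matroid $M_\Psi$ — the one whose circuits should be the $\Psi$-circuits — as the matroid obtained by gluing together a tree of matroids in the sense developed in this paper, and to run an induction on the Borel rank of $\Psi$. We may assume $G$ is connected, handling the general case by taking a direct sum over the components; then $G$ is countable and its end space $\Omega$ is a compact, metrisable, zero-dimensional space, with a clopen basis given by the sets $\Omega_C:=\Omega\cap\overline C$ for $S\se V(G)$ finite and $C$ a component of $G-S$. The axioms (C1) and (C2) for the $\Psi$-circuits are immediate: $\emptyset$ is not the edge set of a topological circle, and if the circle with edge set $C'$ were properly contained in the one with edge set $C$ then deleting the interior of an edge in $C\sm C'$ would exhibit the first circle as a subspace of an arc of $|G|$, which is impossible. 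Thus the entire difficulty is to verify the infinite circuit-elimination axiom (C3) and the maximality axiom (IM).

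For the base case take $\Psi$ clopen. By compactness of $\Omega$ there is a finite $S\se V(G)$ such that whether an end lies in $\Psi$ depends only on the component of $G-S$ containing it. Build a finite tree of matroids whose parts correspond to $S$ and to the components of $G-S$: the part at $S$ carries the cycle matroid of the (finite) torso of $G$ at $S$, and the part at a component $C$ carries $M_{FC}(\overline C)$ if $\Omega_C\cap\Psi=\emptyset$ and $M_{TC}(\overline C)$ if $\Omega_C\se\Psi$ — both of which are tame matroids. A finite tree of tame matroids always yields a matroid (the straightforward case of the construction), and a direct check shows that its circuits are precisely the $\Psi$-circuits.

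For the inductive step it suffices to treat $\Psi=\bigcup_n\Psi_n$ and $\Psi=\bigcap_n\Psi_n$ with each $\Psi_n$ of smaller rank, since every Borel subset of $\Omega$ is obtained from clopen sets by iterating these two operations. Fix an exhaustion of $G$ by finite balls and the associated tree-decomposition into finite parts. The plan is to present $M_\Psi$ as the matroid of a tree of matroids over this decomposition whose node matroids are built from the previously constructed $M_{\Psi_n}$ restricted to finite windows — and it is here that one genuinely needs the feature of the present construction that the matroids at the nodes may be infinite (tame) rather than finite. The gluing data will be coherent thanks to a compactness phenomenon: the set of ends traversed by any fixed topological circle — and, dually, the set of ends on one side of any bond meeting it — is a compact subset of $\Omega$, so its membership in $\bigcup_n\Psi_n$ or in $\bigcap_n\Psi_n$ is already decided at a finite stage of the exhaustion. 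Feeding this into the general tree-of-matroids construction of the paper then produces a matroid, and one identifies its circuits with the $\Psi$-circuits.

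The step I expect to be the main obstacle is checking that the infinite amalgam really satisfies (IM): a directed union, or a filtered intersection, of matroids need not be a matroid, and what must rescue the situation is exactly the hypothesis that $\Psi$ is Borel. Concretely, a failure of (IM) would yield an infinite strictly nested family of local obstructions running along the exhaustion of $G$; encoding the search for such a family as an infinite game in which one player plays edges and ends while the other plays separations, the winning condition consults $\Psi$ only through countably many membership queries, so the game is Borel and hence determined by Martin's Borel determinacy theorem. Showing that neither player can have a winning strategy — the losing of the obstruction-builder being where the combinatorics of $G$-matroids is actually used — then contradicts the assumed failure of (IM). Organising this determinacy argument so that it meshes with the tree-of-matroids construction, rather than being invoked afresh at every limit rank, is the part that will need the most care.
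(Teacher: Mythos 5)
Your plan is not the paper's route, and as it stands it has a genuine gap at its central step. The inductive step on Borel rank hinges on the claim that, because the set of ends used by a fixed topological circle is compact (closed) in $\Omega$, its membership in $\bigcup_n\Psi_n$ or $\bigcap_n\Psi_n$ is ``decided at a finite stage of the exhaustion''. That is false: compactness buys a finite-stage decision only when the $\Psi_n$ are open (so that a finite subcover exists), but here they are arbitrary Borel sets of lower rank, and the set of ends used by a single circle can itself be a Cantor set whose membership in a countable union or intersection of such sets is not witnessed by any finite separator. Without this, you have no mechanism for passing from the matroids $M_{\Psi_n}$ to $M_{\bigcup_n\Psi_n}$ or $M_{\bigcap_n\Psi_n}$; indeed it is nowhere shown (and is not known) that the class of ``good'' $\Psi$ is closed under countable unions and intersections, which is precisely why the known proofs avoid any induction on Borel rank. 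Moreover, the proposed node matroids ``built from the previously constructed $M_{\Psi_n}$ restricted to finite windows'' cannot encode the relevant data: all the $\Psi$-matroids differ only ``at infinity'', so finite windows cannot distinguish the $\Psi_n$, and no concrete construction of these node matroids is given. Finally, the part you yourself flag as the main obstacle --- the determinacy argument for (IM) --- is only an intention, not an argument, and it is aimed at the wrong axiom.

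For comparison, the paper proves the theorem with a single application of Borel determinacy to one game, with no induction on rank. One takes a tree-decomposition of (a subdivision of) $G$ into finite parts (\autoref{td}), and for a fixed partition $E=P\,\dot\cup\, Q\,\dot\cup\,\{e\}$ sets up a positional game between Sarah and Colin along the decomposition tree whose winning condition is the preimage of $\Psi$ under a continuous map from infinite plays to ends; Borel-ness of $\Psi$ makes the game determined (\autoref{pos_det}). A winning strategy for Sarah yields an element of $\Ccal_\Psi$ through $e$ inside $P+e$; a winning strategy for Colin yields, via K\"onig's Infinity Lemma (\autoref{Infinity_Lemma}), a subtree $S_\sigma$ all of whose ends avoid $\Psi$, and then the $\emptyset$-case of (O2) (just the finite-cycle matroid, \autoref{emptyset}) produces the required element of $\Dcal_\Psi$ inside $Q+e$. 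This establishes (O2); (O1) and tameness are quoted from the earlier paper, and since the ground set of a locally finite graph is countable, \autoref{ortho_axioms+} (with (O3), (O3$^*$) coming free from \autoref{O3_psi}) finishes the proof --- so (IM), which worries you, never needs a separate argument in this setting. If you want to salvage your approach, the place to look is exactly how determinacy can replace your false compactness claim, which leads you back to the paper's single-game argument rather than a rank induction.
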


$\Psi$-circuits in graphs are a special case of a more general construction:
A \emph{tree of presentations} $\Tcal$ over a field $k$ consists of a tree $T_\Tcal$ 
with a matroid presentaed over $k$ at each node, where the ground sets of these matroids are only allowed to overlap if they are at adjacent nodes.
Very roughly, the \emph{circuits} of $\Tcal$ are obtained by gluing together local circuits at the nodes of some subtree. Given a set $\Psi$ of ends of $T$, the $\Psi$-circuits are those circuits of $\Tcal$
 for which the underlying subtree has all its ends in $\Psi$.
The following theorem implies \autoref{ref2}.

\begin{thm} [\cite{BC:determinacy}]\label{ref1}
Let $k$ be a finite field and $\Tcal$ be a tree of presentations\footnote{In \cite{BC:determinacy} 
we worked with `trees of matroids' instead of `trees of presentations'. We have made this change since the set of $\Psi$-circuits we get for such a tree may in general not only depend on the structure of the matroids attached to each node but also on their presentations.} 
over $k$.
If $\Psi$ is a Borel set of ends of $T_\Tcal$, then the $\Psi$-circuits are the circuits of a matroid, called the $\Psi$-matroid of $\Tcal$.
\end{thm}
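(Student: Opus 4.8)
The plan is to present the $\Psi$-circuits as the circuits of a tame matroid by pairing them with an explicit cocircuit system and checking the known circuit--cocircuit characterisation of tame matroids (valid for a circuit system $\Ccal$ and a cocircuit system $\Ccal^*$) from \cite{matroid_axioms}. For the cocircuits I would take the $\Psi\ct$-circuits of the \emph{dual} tree of presentations $\Tcal^*$, got by dualising the finite matroid at each node while keeping all the shared (edge) elements: dualising swaps the circuit and cocircuit roles of the local matroids but leaves the ``which ends are used'' bookkeeping untouched, and $\Psi\ct$ is Borel exactly when $\Psi$ is. (This candidate is forced: when $\Tcal$ comes from a tree-decomposition of a locally finite graph $G$, taking $\Psi$ to be all ends makes these the finite bonds of $G$ and $\Psi=\emptyset$ makes them all bonds, matching the cocircuits of $M_{TC}(G)$ and of $M_{FC}(G)$ respectively.) The whole set-up is then self-dual under $(\Tcal,\Psi)\mapsto(\Tcal^*,\Psi\ct)$, so it suffices to verify, for the $\Psi$-circuits: (C1), (C2), circuit elimination; that a $\Psi$-circuit meets a $\Psi\ct$-circuit of $\Tcal^*$ in a finite set of size $\ne 1$; and the duality axiom --- for every element $e$ and every partition $E\setminus\{e\}=P\sqcup Q$ in which $P$ contains no $\Psi$-circuit and $Q$ no $\Psi\ct$-circuit of $\Tcal^*$, adding $e$ to one part creates such a circuit. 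Every clause but the last reduces by direct (if technical) tree arguments to the finite matroids at the nodes: circuit elimination, for instance, holds because eliminating inside $C_1\cup C_2$ leaves the resulting subtree inside the connected union of the subtrees of $C_1$ and $C_2$ and so creates no new end, while a circuit--cocircuit intersection lies in a subtree with no ends and is hence finite. The real content is the duality axiom, and that is where it matters that $\Psi$ is Borel.

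For the duality axiom I would fix $e$ and a partition $E\setminus\{e\}=P\sqcup Q$ as above and play a game $\Gamma$ on the tree $T_\Tcal$ rooted at the node carrying $e$. A position is a finite legal partial object near $e$ which at each visited node has been extended either ``towards a circuit of $\Tcal$ through $e$ using only $P\cup\{e\}$'' or ``towards a cocircuit of $\Tcal$ through $e$ using only $Q\cup\{e\}$''; orthogonality of the finite matroid at each node guarantees that one of these two extensions is always available, so the game is never stuck. The two players, Circ and Cocirc, between them push the object out to ever further nodes, tracing a ray and hence, in the limit, an end $\psi$ of $T_\Tcal$ (a finite play, in which the object closes off, is won by the player whose kind of object it is). Circ wins an infinite play iff the object built is a genuine circuit of $\Tcal$ through $e$ within $P\cup\{e\}$ and $\psi\in\Psi$; Cocirc wins iff it is a genuine cocircuit through $e$ within $Q\cup\{e\}$ and $\psi\in\Psi\ct$.

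The payoff set of $\Gamma$ is Borel: legality of all the local data is a closed condition on the play, the map sending a play to the end it traces is continuous into the Polish end space of $T_\Tcal$, and the payoff is obtained by pulling the Borel sets $\Psi$ and $\Psi\ct$ back along this map and intersecting with clopen legality sets. Hence Martin's Borel Determinacy Theorem applies: $\Gamma$ is determined. A winning strategy for Circ, run against an opponent that systematically explores every direction, glues together into a globally consistent choice of local circuits on a subtree all of whose ends lie in $\Psi$ --- that is, into an actual $\Psi$-circuit through $e$ inside $P\cup\{e\}$; symmetrically a winning strategy for Cocirc yields a cocircuit (a $\Psi\ct$-circuit of $\Tcal^*$) through $e$ inside $Q\cup\{e\}$. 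Either way the duality axiom holds at $e$, and the verification is complete. As $\Psi$-circuits in a locally finite graph arise from a tree-decomposition, this also re-proves \autoref{ref2}.

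Before resorting to determinacy I would try to induct on the Borel rank of $\Psi$ (open and closed sets at the base, countable unions and intersections in the step), but there is no serviceable way to combine the matroids $M_{\Psi_n}$ at a limit stage --- matroids admit no well-behaved infinitary union or intersection --- which is exactly why the uniform game argument is the right tool. I expect the genuine difficulties to be (i) setting up $\Gamma$ so that its legal infinite plays correspond \emph{precisely} to the infinite objects sought, with the correct bookkeeping of elements shared between adjacent nodes and a genuinely continuous ``which end is traced'' map; and (ii) the extraction step --- turning an abstract winning strategy into a concrete global circuit or cocircuit and checking legality along every branch --- which hinges on the local orthogonality of the finite node matroids making $\Gamma$ appropriately zero-sum. (For Martin's theorem to apply verbatim one also wants $T_\Tcal$ countable, so that its end space is Polish.)
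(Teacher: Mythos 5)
Your overall plan --- pair the $\Psi$-circuits with the $\Psi\ct$-(co)circuits, verify the tame circuit--cocircuit (orthogonality) axioms, and obtain the crucial duality/(O2) axiom from Borel determinacy of a game played along the tree rooted at the node carrying $e$, extracting the global object from a winning strategy via a K\"onig-type argument --- is exactly the route taken in \cite{BC:determinacy} and mirrored in Sections \ref{games}--\ref{O2} of this paper. But there is a genuine gap at the heart of your sketch: the claim that the situation is ``self-dual'', so that a single game $\Gamma$ is zero-sum and a winning strategy for either player directly yields the corresponding object. As you describe it, the two winning conditions (``the object is a circuit within $P+e$ tracing an end in $\Psi$'' versus ``the object is a cocircuit within $Q+e$ tracing an end in $\Psi\ct$'') are not complements of one another, so determinacy does not apply to $\Gamma$ as stated; and making them complementary is essentially equivalent to the statement (O2) you are trying to prove, so the zero-sum issue you flag at the end is not a technicality but the whole difficulty. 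In the actual proof the winning condition is purely about the end traced (one player wins an infinite play iff that end lies in $\Psi$), the circuit-building structure is encoded in the legality of moves of one player only (the other player merely ``challenges'' with covectors on the finite sets $E(tt')$), and the cocircuit half is \emph{not} obtained by symmetry inside the same game: one needs a second, dual game (the cocircuit game) and a transfer lemma saying that if the challenger wins the circuit game then he wins the cocircuit game.

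That transfer lemma (Sublemma 8.6 of \cite{BC:determinacy}; \autoref{stellagain} here) is precisely where the hypothesis that all node matroids are represented over a common (finite) field $k$ is used: the local cocircuit fragments produced along the different branches of the challenger's strategy subtree must be glued into a single covector at each node, and this gluing is a linear-algebra statement, not a consequence of local (O2) at the nodes. Your sketch never uses $k$ at all --- your appeal to ``orthogonality of the finite matroid at each node guarantees that one of these two extensions is always available'' only gives a local circuit-or-cocircuit dichotomy at a single node and says nothing about assembling consistent choices across the dummy edges $E(tt')$, which is the step the paper explicitly identifies as relying on representability over a common field. Without this ingredient your extraction of a $\Psi\ct$-cocircuit from a winning strategy for Cocirc does not go through, and with it the argument ceases to be the symmetric one-game argument you describe. (Your circuit-side extraction, via finite branching and K\"onig's Lemma, and your continuity/Borel argument for the payoff set are fine and match the paper.)
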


$\Psi$-matroids also appear naturally in the study of planar duality for infinite graphs \cite{DP:dualtrees} and in the reconstruction theorem of tame matroids from their canonical decompositions into 3-connected minors \cite{BC:ubiquity}. 

The $\Psi$-matroid construction of \autoref{ref1} can be thought of as giving, in a sense, a limit of the matroids induced from finite subtrees, but with the advantage that we are able to freely specify a great deal of information `at infinity', namely the set $\Psi$.
If we choose $\Psi$ to be empty, this corresponds to taking the direct limit.
On the other hand, taking $\Psi$ to be the set of all ends corresponds to taking the inverse  limit.

The purpose of this paper is to prove an extension of \autoref{ref1}. We have proved this extension with an application in mind: it is
used as a tool in the proof of an  extension of \autoref{ref2} to arbitrary graphs \cite{C:undom_td}. 
We have tried to avoid the need for further generalisations by making 
the version in this paper as general as possible.

\vspace{0.3 cm}

Next, let us discuss which of the restrictions from \autoref{ref1} we can weaken.
First, in that theorem all of the matroids at the nodes are required to be finite. Allowing arbitrary infinite matroids at the nodes is unfortunately not possible - in fact, even for infinite stars of matroids in which the central node is infinite but all leaves are finite it is possible for our gluing construction to fail to give a matroid. But this is the only problem - that is, we are able to show that if the matroids at the nodes of the tree work well when placed at the centre of such stars, then they can also be glued together along arbitrary trees. The advantage of this approach is its great generality, but the disadvantage is that the class of {\em stellar} matroids, that is, those which fit well at the centre of stars, is not characterised in simpler terms. However, since in all existing applications the matroids involved can be easily seen to be stellar we do not see this as a great problem.

Second, in \autoref{ref1} the matroids at the nodes were required to be representable over a common field $k$. This continues to play a necessary role in our construction, because it is based on a gluing construction for finite matroids which in turn relies on representability over a common field. Because we now allow the matroids at the nodes to be infinite, we require them to be representable in the sense of \cite{THINSUMS}, which introduced a notion of representability for infinitary matroids. 

However, we are able to drop the requirement that $k$ be finite.
Thus we obtain the following more general result.

\begin{thm}\label{main_intro}
Let $k$ be any field and let $\Tcal$ be a stellar tree of presentations presented over $k$, and let $\Psi$ be a Borel set of ends of $\Tcal$. Then the $\Psi$-circuits are the circuits of a matroid.
\end{thm}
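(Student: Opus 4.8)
The plan is to verify directly that the $\Psi$-circuits of $\Tcal$ form the set of circuits of a matroid, by checking the circuit axioms for infinite matroids from \cite{matroid_axioms}. That $\emptyset$ is not a $\Psi$-circuit and that no $\Psi$-circuit properly contains another follow immediately from the definition together with the corresponding facts for the matroids at the nodes; that the resulting matroid is tame then follows from a short separate argument, since circuit--cocircuit intersections in $\Tcal$ are finite by construction. So the substance of the proof lies in the infinite circuit elimination axiom and, above all, in the axiom (CM) of \cite{matroid_axioms} guaranteeing that every independent set extends to a maximal independent subset of any prescribed set. Both of the generalisations over \autoref{ref1} --- an arbitrary field and infinite (stellar) node matroids --- bear on these two axioms, and the proof must handle them together with the infinite tree and the Borel set $\Psi$.

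The first step is a reduction from arbitrary trees to single nodes. The \emph{stellar} hypothesis says precisely that a star with a node matroid at its centre and finite matroids at its leaves already induces a matroid, and from this I would proceed in two stages. First I would show that gluing stellar presentations along a \emph{finite} tree again yields a stellar presentation --- equivalently, that the relevant generalised $2$-sum of stellar matroids over $k$ is stellar --- so that any finite subtree of $T_\Tcal$ may be contracted to a single node. Secondly I would pass from finite to arbitrary stellar leaf matroids by exhausting each leaf matroid by finite minors, using its thin-sums representation in the sense of \cite{THINSUMS}, and taking a suitable limit of the matroids provided by the stellar hypothesis. After this, \autoref{main_intro} reduces to a statement about gluing stellar presentations along a single, possibly infinite, tree equipped with a Borel set $\Psi$ of ends.

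There remain two sources of difficulty: the arbitrary field and the pair consisting of the infinite tree and $\Psi$. For the field, I would revisit the finite-matroid gluing lemma underlying \autoref{ref1}; that construction is linear-algebraic and goes through over any field, so it is enough to find the places where $|k| < \infty$ was used --- typically compactness over finitely many choices of coordinates --- and replace them with direct arguments. For the tree and $\Psi$, I would follow \cite{BC:determinacy} and argue by transfinite induction on the Borel rank of $\Psi$: the base cases $\Psi = \emptyset$ (a direct limit) and $\Psi$ the set of all ends (an inverse limit), and more generally $\Psi$ clopen, are handled directly via the stellar hypothesis; closure under complementation comes from matroid duality, using that the dual of a stellar tree of presentations is again stellar; and closure under countable unions --- where a $\Psi$-circuit of $\bigcup_n \Psi_n$ need not be a $\Psi_n$-circuit for any single $n$ --- is extracted from the determinacy of an infinite game played along $T_\Tcal$ whose payoff set is Borel because $\Psi$ is. Borel determinacy then yields a winning strategy, from which the required maximal independent set can be read off.

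I expect the axiom (CM), with all four kinds of infinity present at once, to be the main obstacle. Each ingredient --- the arbitrary field, the infinite node matroids, the infinite tree, and the Borel (rather than merely open or closed) set $\Psi$ --- on its own defeats the naive compactness arguments that would otherwise produce maximal independent sets, so one is forced into a single transfinite recursion along the tree that is compatible with the thin-sums representations at the nodes, respects the stratification of $\Psi$ by Borel rank, and still delivers an extendable independent set at each limit stage. Making this recursion close up --- in particular, ensuring that the game used to verify (CM) genuinely has a Borel payoff set once infinite leaf matroids are allowed --- is where I expect most of the effort to go; by comparison, the passage from finite to arbitrary fields and the reduction to stars, while not wholly routine, should be comparatively mechanical.
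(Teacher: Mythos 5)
Your outline identifies the right pressure points (an (O2)-type statement and a base/(IM)-type statement, determinacy of a Borel game, duality), but as it stands it has genuine gaps, and two of its structural choices would not go through. First, the proposed reduction --- contracting finite subtrees after showing that gluing stellar presentations along finite trees is again stellar, and then ``exhausting each leaf matroid by finite minors \ldots and taking a suitable limit'' --- is neither justified nor needed: an infinite tree need not have leaves at all, the exhaustion-and-limit step is exactly the kind of compactness argument that fails for infinite matroids (this failure is the reason the stellar hypothesis is introduced), and after your reduction you say the problem ``reduces to gluing stellar presentations along a single, possibly infinite, tree with a Borel set of ends'', which is the original theorem, so nothing has been gained. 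Likewise the transfinite induction on the Borel rank of $\Psi$ is superfluous once you invoke Borel determinacy (which you do at the countable-union step anyway): the paper simply observes that the payoff set of a positional game along $T$ is the continuous preimage of $\Psi$, hence Borel, and applies determinacy directly for all Borel $\Psi$ at once; no stratification by rank is used, and it is not clear your induction could be closed at union stages without determinacy doing all the work.

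More importantly, the two ideas that actually carry the generalisation are missing. For (O2), when the presentations at the nodes are infinite (or $k$ is infinite), Sarah's winning strategy in the circuit game may branch infinitely, so one can no longer conclude that the subtree $S_\sigma$ of positions she visits has all its ends in $\Psi$; the paper's new ingredient is \autoref{reduced}, producing a \emph{reduced} winning strategy whose branching over each edge $tu$ is bounded by $|E(tu)|$ via a linear-dependence argument, after which K\"onig's Lemma applies --- this is precisely the replacement for the finiteness of $k$ that you say should be found, but your sketch does not supply it (nor the transfer between circuit and cocircuit games via \autoref{stellagain}). For (IM)/(CM), which you yourself flag as the main obstacle, you offer only ``a single transfinite recursion along the tree'' with no mechanism; the paper's proof works because \autoref{pickPQ} lets one replace each branch hanging off a node by a finite minor preserving the restriction and contraction to the finite attachment set, so that \autoref{IMstar} (using stellarity at that node) produces the local piece of a base with a component-separation property (condition 8) that makes the pieces compatible along the tree. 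Finally, note the paper never verifies infinite circuit elimination directly: it works with the orthogonality axioms and \autoref{ortho_axioms+_cor}, so only (O1), tameness, (O2) and (IM) need checking; verifying elimination head-on, as you propose, would be an additional unaddressed burden.
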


We not only extend \autoref{ref2}, we also give a new and simpler proof of it, see \autoref{easy_o2}.

The paper is organised as follows. After recalling some preliminaries in \autoref{prelims}, in \autoref{easy_o2} we give a new proof of \autoref{ref2} which is simpler than the original one. 
However, to understand the rest of this paper, it is not necessary to read that section.
In the proof of our main result, we will rely on the determinacy of certain games, and in \autoref{games} we prove a lemma that allows us to simplify winning strategies in  these games. We then introduce presentations of infinite matroids over a field in \autoref{pres}, and the gluing construction along a tree in \autoref{treepres}. The proof that this construction gives rise to matroids is given in Sections \ref{O2} and \ref{IM}.

\section{Preliminaries}\label{prelims}

Throughout, notation and terminology for (infinite) graphs are those of~\cite{DiestelBook10}, and for matroids those of~\cite{Oxley,matroid_axioms}. We will rely on the following lemma from \cite{DiestelBook10}:

\begin{lem}[K\"onig's Infinity Lemma \cite{DiestelBook10}]\label{Infinity_Lemma}
Let $V_0,V_1,\ldots$ be an infinite sequence of disjoint non-empty finite sets, and let $G$ be a graph on their union. Assume that every vertex $v$ in $V_n$ with $n\geq 1$ has a neighbour $f(v)$ in $V_{n-1}$. Then $G$ includes a ray $v_0v_1\ldots$ with $v_n\in V_n$ for all $n$.
\end{lem}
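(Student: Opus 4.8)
The plan is the classical ``fat subtree'' argument. First I would record the basic structure: since the $V_n$ are pairwise disjoint and each is non-empty, the union $V:=\bigcup_n V_n$ is infinite, and the map $f$ makes $V$ into a forest in which each $v\in V$ has a well-defined \emph{level} $n$ (the unique $n$ with $v\in V_n$) and, if $n\geq 1$, a \emph{parent} $f(v)$ at level $n-1$. Since applying $f$ strictly decreases the level, iterating $f$ from any $v$ reaches after finitely many steps a unique root $r(v)\in V_0$. For $v\in V$ let $D(v)$ be the set of \emph{descendants} of $v$, i.e.\ those $w\in V$ such that $v=w$ or $v=f^{(i)}(w)$ for some $i\geq 1$; note $v\in D(v)$. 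Call $v$ \emph{fat} if $D(v)$ is infinite.

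Next I would establish two facts. First, some vertex of $V_0$ is fat: the sets $D(v)$ for $v\in V_0$ partition $V$ (each $w$ lies in exactly $D(r(w))$), $V$ is infinite, and $V_0$ is finite, so one of these finitely many pieces is infinite. Second, every fat vertex $v$, say of level $n$, has a fat child, i.e.\ some $v'\in V_{n+1}$ with $f(v')=v$ and $D(v')$ infinite: indeed $D(v)=\{v\}\cup\bigcup\{D(v'):f(v')=v\}$, the index set $\{v':f(v')=v\}$ is a finite subset of $V_{n+1}$, and $D(v)$ is infinite, so at least one $D(v')$ is infinite.

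Finally I would build the ray by recursion. Using the first fact, pick a fat $v_0\in V_0$. Given a fat $v_n\in V_n$, use the second fact to choose a fat $v_{n+1}\in V_{n+1}$ with $f(v_{n+1})=v_n$; by hypothesis $f(v_{n+1})$ is a neighbour of $v_{n+1}$ in $G$, so $v_n v_{n+1}\in E(G)$. The resulting sequence $v_0 v_1 v_2\ldots$ satisfies $v_n\in V_n$ for all $n$, has consecutive terms adjacent in $G$, and has pairwise distinct terms because the $V_n$ are disjoint; hence it is a ray of $G$ of the desired form.

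I do not expect any genuine obstacle here. The only points needing a little care are checking that $D(v)$ is well defined — which rests on the fact that iterating $f$ always drops to a strictly lower level and therefore terminates in $V_0$ — and noting that each vertex has only finitely many children, since they all lie in the finite set $V_{n+1}$; both are immediate from the hypotheses that the $V_n$ are finite and pairwise disjoint.
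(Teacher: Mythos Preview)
Your argument is correct and is the standard ``fat subtree'' (pigeonhole) proof of K\"onig's Infinity Lemma. Note, however, that the paper does not give its own proof of this statement: it is simply quoted from \cite{DiestelBook10}, with only the remark that it is equivalent to the usual formulation that every locally finite rayless tree is finite. So there is nothing to compare against; your proposal supplies a valid proof where the paper merely cites one.
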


Note that this is equivalent to the usual formulation of K\"onig's Lemma, namely that every locally finite rayless tree is finite.

Given a graph $G$, the set of its ends is denoted by $\Omega(G)$.
An end $\omega$ is in the \emph{closure of} some edge set $F$ if for each finite separator $S$, the unique component of $G\sm S$ including a tail of each ray in $\omega$ contains a vertex incident with an edge of $F$ in $G$.

A \emph{walk} in a digraph is a sequence $w_1...w_n$ of vertices such that $w_iw_{i+1}$
is an edge for each $i<n$.
For a walk $W=w_1...w_n$ and a vertex $x$ in $W$, 
let $i$ be minimal with $w_i=x$.
Then we denote by $Wx$ the walk $w_1...w_i$ and by $xW$ the walk $w_i...w_n$.

In this paper we define \emph{tree decompositions} slightly differently than in \cite{DiestelBook10}.
Namely, we impose the additional requirement that each edge of the graph is contained in a unique part of the tree decomposition. Clearly, each tree decomposition in the sense of 
\cite{DiestelBook10} can easily be transformed into such a tree decomposition of the same width.
Throughout this paper, \emph{even} means finite and a multiple of 2. 

\vspace{0.3 cm}

Having dealt with the graph theoretic preliminaries, we now define positional games. 
A \emph{positional game} is played in a digraph $D$ with a marked starting vertex $a$. The vertices of the digraph are called {\em positions} of the game.
The game is played between two players between whom play alternates.
At any point in the game, there is a \emph{current position}, which initially is $a$.
In each move, the player whose turn it is to play picks an out-neighbour $x$ of the current position, and then the current position is updated to $x$. Thus a \emph{play} in this game is 
encoded as a walk in $D$ starting at an out-neighbour of $a$. If a player cannot move, they lose. 
 If play continues forever, then the players between them generate an infinite walk starting at a neighbour of $a$. Then the first player wins if this walk is in the \emph{set $\Phi$ of winning conditions}, which is part of the data of the positional game.

A \emph{strategy for the first player} is a set $\sigma$ of finite plays $P$ all ending with a move of the first player
such that the following is true for all $P\in \sigma$:
Let $m$ be a move of the second player such that $P m$ is a legal play.
Then there is a unique move $m'$ of the first player such that $P m m'\in \sigma$.
Furthermore, we require that $\sigma$ is closed under \emph{2-truncation}, that is, 
for every nontrivial $P\in \sigma$ there are some
$P'\in \sigma$ and moves $m$ and $m'$ of the second player and the first player, respectively, such that
$P' m m'=P$.

An infinite play \emph{belongs to} a strategy $\sigma$ for the first player if all its odd length finite initial plays are in $\sigma$.
A strategy for the first player is \emph{winning} if the first player wins in all infinite plays belonging to $\sigma$.
Similarly, one defines \emph{strategies} and \emph{winning strategies} for the second player.

\vspace{0.3 cm}

Finally, we summarise the matroid theoretic preliminaries. Given a matroid $M$, by $\Ccal(M)$ we denote the set of circuits of $M$, and by $\Scal(M)$ we denote the set of \emph{scrawls} of $M$, where a \emph{scrawl} is just any (possibly empty) union of circuits.\footnote{Matroids can be axiomatised in terms of their scrawls \cite{BC:rep_matroids}.} 
The {\em orthogonality axioms}, introduced in \cite{BC:determinacy}, are as follows, where 
$\Ccal$ and $\Dcal$ are sets of subsets of a groundset $E$, and can be thought of as the sets of circuits and cocircuits of some matroid, respectively.

\begin{itemize}
        \item[(O1)] $|C\cap D|\neq 1$ for all $C\in \Ccal$ and $D\in \Dcal$. 
        \item[(O2)] For all partitions $E=P\dot\cup Q\dot\cup \{e\}$
either $P+e$ includes an element of $\Ccal$ through $e$ or
$Q+e$ includes an element of $\Dcal$ through $e$.
\item[(O3)]For every $C\in \Ccal$, $e\in C$ and $X\subseteq E$, there is some $C_{min}\in \Ccal$ with $e\in C_{min} \se X \cup C$ such that $C_{min}\sm X$ is minimal.
\item [(O3$^*$)]For every $D\in \Dcal$, $e\in D$ and $X\subseteq E$, there is some $D_{min}\in \Dcal$ with $e\in D_{min} \se X \cup D$ such that $D_{min}\sm X$ is minimal.
\end{itemize}

\begin{thm}
[{\cite[Theorem 4.2]{BC:determinacy}}]\label{ortho_axioms+}
Let $E$ be a countable set and let $\Ccal,\Dcal\subseteq \Pcal(E)$.
Then there is a unique matroid $M$ such that $\Ccal(M)\subseteq \Ccal \subseteq \Scal(M)$
and $\Ccal(M^*)\subseteq \Dcal \subseteq \Scal(M^*)$ if and only if 
 $\Ccal$ and $\Dcal$ satisfy (O1), (O2), (O3) and (O3$^*$).
\end{thm}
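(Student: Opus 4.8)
\emph{The plan} is to handle uniqueness and the ``only if'' direction first, both of which are soft, and then the ``if'' direction, which carries all the work. For uniqueness: if $M$ is a matroid with $\Ccal(M)\se\Ccal\se\Scal(M)$ and $\Ccal(M^{*})\se\Dcal\se\Scal(M^{*})$, then since every nonempty member of $\Ccal$ is a union of circuits of $M$ (hence contains one) while every circuit of $M$ lies in $\Ccal$, the circuits of $M$ are exactly the inclusion-minimal nonempty members of $\Ccal$, and dually the cocircuits of $M$ are the inclusion-minimal nonempty members of $\Dcal$; moreover $B$ is a base of $M$ iff $B$ contains no nonempty member of $\Ccal$ and $E\sm B$ contains no nonempty member of $\Dcal$ (a base being an independent set whose complement is independent in $M^{*}$). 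Hence $M$ is determined by $\Ccal$, which gives uniqueness.

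For the ``only if'' direction, assume $M$ is such a matroid. Axiom (O1) holds because a circuit of $M$ through $e$ inside $C\in\Ccal$ and a cocircuit of $M$ through $e$ inside $D\in\Dcal$ would meet in exactly $\{e\}$, contradicting orthogonality of circuits and cocircuits. For (O2), given $E=P\,\dot\cup\,Q\,\dot\cup\,\{e\}$ we have either $e\in\Cl_{M}(P)$, so $P+e$ contains a circuit of $M$ (a member of $\Ccal$) through $e$, or, using the duality $e\in\Cl_{M}(P)\iff e\notin\Cl_{M^{*}}(Q)$, $e\in\Cl_{M^{*}}(Q)$, so $Q+e$ contains a cocircuit of $M$ (a member of $\Dcal$) through $e$. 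Finally (O3) and (O3$^{*}$) are dual, and (O3) follows from the circuit and maximality axioms of infinite matroids by a routine argument: choose a circuit $C_{0}\se C$ of $M$ with $e\in C_{0}$, work inside $M\restric(X\cup C)$, extend a maximal independent subset of $X\cap(X\cup C)$ to a maximal independent subset of $(X\cup C)-e$, and take the fundamental circuit of $e$, whose trace outside $X$ one checks is minimal.

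For the ``if'' direction, assume $\Ccal,\Dcal$ satisfy (O1)--(O3$^{*}$) and put
\[
\Ical:=\{I\se E:\ I\text{ has no nonempty subset in }\Ccal\},\qquad \Ical^{*}:=\{J\se E:\ J\text{ has no nonempty subset in }\Dcal\},
\]
and $\Bcal:=\{B\se E:\ B\in\Ical\text{ and }E\sm B\in\Ical^{*}\}$. The plan is to show $\Bcal$ is the base set of a matroid $M$; the four containments then come cheaply, since $\Ccal(M)\se\Ccal$ and $\Ccal(M^{*})\se\Dcal$ hold once $\Ical(M)=\Ical$ and $\Ical(M^{*})=\Ical^{*}$, while $\Ccal\se\Scal(M)$ follows from (O1) alone (if $C\in\Ccal$, $e\in C$ and $e\notin\Cl_{M}(C-e)$, then $(E\sm C)+e$ contains a cocircuit $D^{*}$ of $M$ through $e$, and $|C\cap D^{*}|=1$ contradicts (O1)), and dually for $\Dcal\se\Scal(M^{*})$. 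Among the base axioms, $\Ical$ and $\Ical^{*}$ are obviously closed under subsets, and the exchange axiom can be verified directly from (O1) and (O2): given $B_{1},B_{2}\in\Bcal$ and $x\in B_{1}\sm B_{2}$, apply (O2) to $(B_{1}-x,\ E\sm B_{1},\ x)$ --- the first alternative being ruled out by $B_{1}\in\Ical$ --- to get $D\in\Dcal$ through $x$ with $D\se(E\sm B_{1})+x$, pick $y\in(D\cap B_{2})\sm\{x\}$ (nonempty as $E\sm B_{2}\in\Ical^{*}$), and deduce $B_{1}-x+y\in\Bcal$; the one subtlety is that a member of $\Dcal$ meeting $B_{1}$ in the single element $x$ is unique, which follows again from (O1) and (O2).

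The remaining, and main, obstacle is the maximality axiom: every $I\in\Ical$ should extend to a member of $\Bcal$, and more generally, for $I\se X\se E$ with $I\in\Ical$ the family of members of $\Ical$ between $I$ and $X$ should have a maximal element. A naive greedy or Zorn argument fails here because members of $\Ccal$ can be infinite, so a forbidden circuit may be assembled only ``in the limit''; instead the plan is to use determinacy. Fixing an enumeration of $E$, one plays a game in which a Builder incrementally commits elements to a candidate base extending $I$ while an Adversary probes undecided elements, trying to force a member of $\Ccal$ into the base or a member of $\Dcal$ into its complement. Axiom (O2) ensures Builder always has a legal reply to a probe, (O1) ensures the two options for that reply never conflict, and (O1), (O3), (O3$^{*}$) together ensure the Adversary cannot win: a winning Adversary strategy would have to assemble a forbidden set, which by (O3)/(O3$^{*}$) may be taken minimal, and then (O2) applied to a suitable partition yields a contradiction. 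The game can be set up so that its winning condition is Borel --- indeed so that Builder loses only on reaching a position from which a required move is illegal, which is finitely checkable via (O2) --- so it is determined, and a winning Builder strategy yields the desired $B\in\Bcal$. The relativised statement for $I\se X\se E$ then follows by running the same argument on the minor of $(\Ccal,\Dcal)$ obtained by contracting $I$ and deleting $E\sm X$, once one has checked --- and here (O3), (O3$^{*}$) are essential, to handle the passage to minimal sets --- that (O1)--(O3$^{*}$) are preserved under deletion and contraction. With $\Bcal\ne\emptyset$, exchange, and maximality in hand, $\Bcal$ is a base system, completing the proof.
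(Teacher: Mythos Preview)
This theorem is not proved in the present paper; it is quoted from \cite{BC:determinacy} and used as a black box, so there is no proof here to compare your attempt against.

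That said, your proposal has a genuine gap at the decisive step. The uniqueness and ``only if'' arguments are fine, and it is correct that for the ``if'' direction everything reduces to establishing the maximality axiom for the candidate base system $\Bcal$. But your determinacy argument is not a proof: the game is never precisely specified, and the two natural readings both fail. If Builder's payoff is simply ``never gets stuck'' (Builder can always commit $e_n$ to $P$ or to $Q$ while keeping $P_n\in\Ical$ and $Q_n\in\Ical^*$), then (O1) and (O2) already guarantee Builder never gets stuck---no determinacy is needed---but nothing prevents an infinite $C\in\Ccal$ from lying inside $B=\bigcup_n P_n$, so $B$ need not be in $\Bcal$. If instead Builder's payoff is ``$B\in\Bcal$'', you must show Adversary cannot win, and your one-line invocation of (O3)/(O3$^*$) does not do this; you have not said what Adversary's moves are or how an Adversary strategy would ``assemble a forbidden set'', nor why (O2) applied to ``a suitable partition'' then yields a contradiction. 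The real work---showing that the limit set $B$ is independent and its complement coindependent---is precisely where (O3) and (O3$^*$) must be deployed carefully, and this is simply asserted.

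There is also a smaller, repairable gap in your exchange argument. After choosing $y\in D\cap B_2$ you correctly verify $B_1-x+y\in\Ical$, but to show $(E\setminus B_1)+x-y\in\Ical^*$ your ``uniqueness'' remark is not what is needed. What works is: apply (O2) to the partition $(B_1,(E\setminus B_1)-y,y)$ to get $C\in\Ccal$ with $y\in C\subseteq B_1+y$; then $C\cap D\subseteq\{x,y\}$ and $y\in C\cap D$, so (O1) forces $x\in C$; now for any hypothetical $D'\in\Dcal$ with $x\in D'\subseteq (E\setminus B_1)+x-y$ one has $C\cap D'=\{x\}$, contradicting (O1).
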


We shall not be able to rely on this characterisation of matroids
since we will be dealing with possibly uncountable groundsets $E$.
So we will need an extra axiom.
A set $I\se E$ is \emph{independent} if it does not include any nonempty element of $\Ccal$.
Given $X\se E$, a \emph{base of $X$} is a maximal independent subset of $X$. A {\em base} of $(\Ccal, \Dcal)$ is a maximal independent subset of the ground set $E$.
\begin{itemize}
        \item[(IM)] 
Given an independent set $I$ and a superset $X$,
there exists a base of $X$ including $I$.
\end{itemize}

The proof of \autoref{ortho_axioms+} as in \cite{BC:determinacy} also proves the following:

\begin{cor}\label{ortho_axioms+_cor}
Let $E$ be a set and let $\Ccal,\Dcal\subseteq \Pcal(E)$.
Then there is a unique matroid $M$ such that $\Ccal(M)\subseteq \Ccal \subseteq \Scal(M)$
and $\Ccal(M^*)\subseteq \Dcal \subseteq \Scal(M^*)$ if and only if 
 $\Ccal$ and $\Dcal$ satisfy (O1), (O2), and (IM).
\end{cor}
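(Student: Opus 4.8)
The plan is to obtain both implications from the proof of \autoref{ortho_axioms+} given in \cite{BC:determinacy}, the only new ingredient being to locate the single place where countability (and the axioms (O3), (O3$^*$)) enter that argument and to replace it by the hypothesis (IM).

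For the ``only if'' direction I would argue directly. Let $M$ be a matroid with $\Ccal(M)\se\Ccal\se\Scal(M)$ and $\Ccal(M^*)\se\Dcal\se\Scal(M^*)$. The key preliminary observation is that a set is $\Ccal$-independent precisely when it is independent in $M$: by $\Ccal(M)\se\Ccal$ a $\Ccal$-independent set contains no circuit of $M$, and by $\Ccal\se\Scal(M)$ any nonempty member of $\Ccal$ is a nonempty union of circuits of $M$ and so contains one. From this, (IM) for $(\Ccal,\Dcal)$ is literally (IM) for $M$, and uniqueness follows at once, since every such $M$ is forced to have the $\Ccal$-independent sets as its independent sets. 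For (O1) I would use that a union of circuits $S$ and a union of cocircuits $S^*$ of $M$ satisfy $|S\cap S^*|\neq 1$: if $x\in S\cap S^*$, pick a circuit $C\se S$ and a cocircuit $D\se S^*$ with $x\in C\cap D$; since $|C\cap D|\neq 1$ there is $y\in(C\cap D)\sm\{x\}\se S\cap S^*$. Applying this with $S\in\Ccal$ and $S^*\in\Dcal$ gives (O1). For (O2) I would invoke the standard fact that, for a partition $E=P\dot\cup Q\dot\cup\{e\}$, either $e\in\Cl_M(P)$ --- in which case some circuit of $M$ through $e$ is contained in $B_P+e$ for $B_P$ a base of $P$, hence in $P+e$ --- or $e$ lies in a cocircuit of $M$ disjoint from $P$, hence contained in $Q+e$; this translates into (O2) through $\Ccal(M)\se\Ccal$ and $\Ccal(M^*)\se\Dcal$.

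For the ``if'' direction I would reuse the construction of \cite{BC:determinacy}: put $\Ical:=\{I\se E: I\text{ is }\Ccal\text{-independent}\}$ and show $\Ical$ is the independence system of the required matroid. The proof of \autoref{ortho_axioms+} establishes, from (O1)--(O3$^*$) and countability of $E$, that $\Ical$ obeys the infinite-matroid independence axioms and that the resulting matroid $M$ satisfies $\Ccal(M)\se\Ccal\se\Scal(M)$ and $\Ccal(M^*)\se\Dcal\se\Scal(M^*)$. The point to be checked is that in that proof countability, (O3) and (O3$^*$) are needed \emph{only} to verify the axiom (IM) for $\Ical$, whereas (I1)--(I3) and the four circuit/scrawl containments rest on (O1) and (O2) alone; granting this, substituting our hypothesis (IM) yields the statement for arbitrary $E$. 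As a sanity check that the remaining steps are insensitive to the size of $E$: with (O1) alone, if $B$ is a maximal element of $\Ical$ then $E\sm B$ contains no nonempty member of $\Dcal$ --- else pick such a $D$ and $e\in D$; maximality yields a nonempty $C\in\Ccal$ with $e\in C\se B+e$, and (O1) forces some $f\in(C\cap D)\sm\{e\}\se B\cap(E\sm B)$, which is absurd --- and with (O2) alone, any $B\in\Ical$ for which $E\sm B$ contains no nonempty member of $\Dcal$ is maximal in $\Ical$ (apply (O2) to $E=B\dot\cup((E\sm B)\sm\{e\})\dot\cup\{e\}$ for a putative extension $B+e$). Since (O1) and (O2) are symmetric in $\Ccal$ and $\Dcal$, these two facts and their duals identify the bases of $\Ical$ with the complements of the maximal $\Dcal$-independent sets, and the hypothesis (IM) guarantees that such bases exist relative to every pair $I\se X$; the deduction of (I3) and of the containments then proceeds exactly as in \cite{BC:determinacy}.

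The step I expect to be the main obstacle is precisely this bookkeeping: verifying that the proof of \autoref{ortho_axioms+} in \cite{BC:determinacy} invokes countability and the axioms (O3), (O3$^*$) nowhere except in producing maximal independent subsets of prescribed sets, so that all such invocations can be absorbed into a single application of (IM). The ``only if'' direction and the (O1)/(O2)-only portions above are routine.
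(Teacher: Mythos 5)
Your proposal is correct and matches the paper's approach: the paper offers no more of a proof than the observation that the argument for \autoref{ortho_axioms+} in \cite{BC:determinacy} goes through verbatim once countability and (O3), (O3$^*$) — which enter only in producing maximal independent subsets — are replaced by the hypothesis (IM), which is exactly your reduction. Your additional spelling-out of the ``only if'' direction and of the (O1)/(O2)-only steps is consistent with, and slightly more detailed than, what the paper records.
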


\vspace{0.3 cm}

We say that $(\Ccal, \Dcal)$ is {\em tame} if the intersection of any set in $\Ccal$ with any set in $\Dcal$ is finite.
In the proof of our main result we will be in the situation that we have a 
pair $(\Ccal,\Dcal)$ of subsets of the powerset of some set $E$ that satisfies (O1) and (O2) and is tame. We call such a pair an \emph{orthogonality system}.


\begin{thm}
[{\cite[Theorem 4.4]{BC:determinacy}}]\label{O3_psi}
Any orthogonality system satisfies $(O3)$ and $(O3)^*$.
\end{thm}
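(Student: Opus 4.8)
\emph{Reduction of (O3$^*$) to (O3).} The hypotheses are symmetric under interchanging $\Ccal$ and $\Dcal$: (O1) is symmetric since $|C\cap D|=|D\cap C|$, tameness is symmetric, and (O2) for $(\Dcal,\Ccal)$ is just (O2) for $(\Ccal,\Dcal)$ after also swapping the roles of $P$ and $Q$. Hence $(\Dcal,\Ccal)$ is again an orthogonality system, and (O3$^*$) for $(\Ccal,\Dcal)$ is literally (O3) for $(\Dcal,\Ccal)$. So it suffices to prove (O3).

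\emph{Set-up and overall strategy.} Fix $C\in\Ccal$, $e\in C$ and $X\subseteq E$, and put
$\Fcal=\{C'\in\Ccal: e\in C'\subseteq C\cup X\}$ and $\mathcal K=\{C'\setminus X: C'\in\Fcal\}\subseteq\Pcal(C\setminus X)$. Note $\mathcal K\neq\emptyset$ since $C\in\Fcal$, and a set $C_{min}$ as required by (O3) is exactly a member of $\Fcal$ whose trace outside $X$ is $\subseteq$-minimal in $\mathcal K$. I would produce such a minimal element by applying Zorn's Lemma to $(\mathcal K,\supseteq)$; thus the whole content of the argument is the chain condition: for every $\subseteq$-chain $(K_i)_{i\in I}$ in $\mathcal K$ there is $C^{*}\in\Fcal$ with $C^{*}\setminus X\subseteq K^{*}:=\bigcap_{i}K_i$ (so that $C^{*}\setminus X\in\mathcal K$ is a lower bound for the chain).

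\emph{Proving the chain condition.} Apply (O2) to the partition of $E$ with $P=(K^{*}\cup X)\setminus\{e\}$ and $Q=E\setminus(P\cup\{e\})$. In the first alternative we obtain $C^{*}\in\Ccal$ with $e\in C^{*}\subseteq K^{*}\cup X\cup\{e\}\subseteq C\cup X$, so $C^{*}\in\Fcal$; and $C^{*}\setminus X\subseteq(K^{*}\cup X\cup\{e\})\setminus X\subseteq K^{*}$, using that if $e\notin X$ then $e\in C_i\setminus X=K_i$ for every $i$, hence $e\in K^{*}$. So this alternative gives what we want. I claim the second alternative is impossible. If it held, there would be $D\in\Dcal$ with $e\in D$ and $D\cap(K^{*}\cup X)\subseteq\{e\}$. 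Pick $C_i\in\Fcal$ with $C_i\setminus X=K_i$; since $e\in C_i\cap D$, (O1) gives some $g_i\in(C_i\cap D)\setminus\{e\}$, and $g_i\notin X$ (otherwise $g_i\in D\cap X\subseteq\{e\}$), so $g_i\in(K_i\cap D)\setminus\{e\}$ for every $i$. But by tameness $C\cap D$ is finite, and $(K_i\cap D)_i$ is a $\subseteq$-chain of subsets of $C\cap D$, so it has a least member $K_{i_0}\cap D$; then $K_{i_0}\cap D\subseteq\bigcap_i(K_i\cap D)=K^{*}\cap D\subseteq\{e\}$, contradicting $g_{i_0}\in(K_{i_0}\cap D)\setminus\{e\}$. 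Hence the first alternative holds and the chain condition is established.

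\emph{Conclusion and main obstacle.} Zorn's Lemma now yields a $\subseteq$-minimal element $K_{min}\in\mathcal K$; any $C_{min}\in\Fcal$ with $C_{min}\setminus X=K_{min}$ witnesses (O3). The one genuinely delicate point is the chain condition, and within it the impossibility of the second (O2)-alternative: the role of (O2) is precisely to manufacture a ``limit circuit'' $C^{*}$, and the role of tameness is precisely to force a finite $\subseteq$-chain to stabilise and thereby kill the competing cocircuit. The remaining verifications (that $C^{*}\in\Fcal$, and the symmetry transferring the axioms) are routine bookkeeping. One caveat I would keep an eye on throughout is the boundary case $e\in X$, where $e\notin K^{*}$ and every member of $\mathcal K$ avoids $e$; the argument above goes through verbatim, but the various ``$\subseteq\{e\}$'' bounds must be re-read with this in mind.
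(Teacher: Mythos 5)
Your proof is correct: the symmetry reduction of (O3$^*$) to (O3), and the Zorn argument whose chain condition you secure by applying (O2) to the partition around $K^{*}\cup X$ and ruling out the cocircuit alternative via tameness of $C\cap D$ together with (O1), all go through, including the boundary case $e\in X$. Note that this paper does not prove the statement itself but cites it from Bowler--Carmesin's determinacy paper, and your argument is essentially the proof given there, so there is nothing substantive to compare beyond that.
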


\begin{rem}\label{base_char}
A set $B$ is a base for an orthogonality system $(\Ccal,\Dcal)$ if and only if
for each $x\notin B$, there is some $o\in \Ccal$ with $x\in o\se B+x$ and 
 for each $x\in B$, there is some $d\in \Dcal$ with $x\in d\se (E\sm B)+x$.
\end{rem}

Given $X\se E$, then the \emph{restriction $\Ccal\restric_X$ of $\Ccal$ to $X$} consists of those $o\in \Ccal$ included in $X$.
Similarly, the \emph{contraction $\Ccal.X$ of $\Ccal$ to $X$} is the set of those $a\se X$ such that
there is some $o\in \Ccal$ with $a=o\sm X$. 
We let $(\Ccal,\Dcal)\restric_X=(\Ccal\restric_X,\Dcal.X)$ and
$(\Ccal,\Dcal).X=(\Ccal.X,\Dcal\restric X)$. As usual we let $(\Ccal,\Dcal)\sm X= (\Ccal,\Dcal)\restric_{ (E\sm X)}$ and $(\Ccal,\Dcal)/ X= (\Ccal,\Dcal).(E\sm X)$.

\begin{rem}\label{minor_closed}
 If $(\Ccal,\Dcal)$ is an orthogonality system, then for any $X\se E$ both $(\Ccal,\Dcal)\restric_X$ and 
$(\Ccal,\Dcal).X$ are orthogonality systems.
\qed
\end{rem}

\begin{cor}\label{base}
Let $(\Ccal,\Dcal)$ be an orthogonality system such that for any two disjoint sets $A$ and $B$
the orthogonality system  $(\Ccal,\Dcal)/A\sm B$ has a base. Then $(\Ccal,\Dcal)$ satisfies (IM).
\end{cor}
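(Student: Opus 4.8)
The plan is to reduce (IM) to the hypothesis by working inside a single minor of $(\Ccal,\Dcal)$. Fix an independent set $I$ and a superset $X\supseteq I$; we must produce a maximal independent subset of $X$ that contains $I$. Observe first that both the hypothesis and the conclusion refer only to the independence relation, which is determined by the collection of circuits alone, so $\Dcal$ will play no active role. The natural candidate is the minor $N:=(\Ccal,\Dcal)/I\sm(E\sm X)$. Since $I\se X$, the sets $A:=I$ and $B:=E\sm X$ are disjoint, so the hypothesis applies and $N$ has a base $J$, that is, a maximal independent subset of the ground set of $N$. Unwinding the definitions from \autoref{prelims}, that ground set is $(E\sm I)\sm(E\sm X)=X\sm I$, and the circuits of $N$ are precisely the nonempty sets of the form $o\sm I$ with $o\in\Ccal$ and $o\se X$ (first take the circuits $\{o\sm I:o\in\Ccal\}$ of the contraction $(\Ccal,\Dcal)/I$, then keep those lying in $X\sm I$).

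The key step is the routine transfer observation: for any $J\se X\sm I$, the set $J$ is independent in $N$ if and only if $I\cup J$ is independent in $(\Ccal,\Dcal)$. Indeed, if $I\cup J$ contains a nonempty $o\in\Ccal$, then $o\se I\cup J\se X$ and, since $I$ is independent, $o\not\se I$, so $o\sm I$ is a nonempty circuit of $N$ contained in $(I\cup J)\sm I=J$; hence $J$ is dependent in $N$. Conversely, if $o\sm I$ is a nonempty circuit of $N$ with $o\sm I\se J$, then $o$ is a nonempty member of $\Ccal$ contained in $I\cup J$, so $I\cup J$ is dependent in $(\Ccal,\Dcal)$. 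This is just the analogue for orthogonality systems of the familiar fact that, when $I$ is independent, a set is independent after contracting $I$ exactly when its union with $I$ is independent; if convenient it could be recorded as a separate remark.

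It remains to assemble these. Applying the transfer observation to the base $J$ of $N$, the set $I\cup J$ is independent in $(\Ccal,\Dcal)$, and plainly $I\se I\cup J\se X$. For maximality, suppose $I\cup J\se B'\se X$ with $B'$ independent in $(\Ccal,\Dcal)$. Then $B'\sm I\se X\sm I$, and by the transfer observation applied to $B'\sm I$ this set is independent in $N$; since it contains the base $J$ of $N$, maximality of $J$ forces $B'\sm I=J$, whence $B'=I\cup J$. Thus $I\cup J$ is a base of $X$ containing $I$, which is exactly what (IM) demands.

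I do not anticipate a genuine obstacle: the content is entirely in the bookkeeping with the minor operations. The two points to be careful about are that $I\se X$ is what makes $A=I$ and $B=E\sm X$ disjoint (so that the hypothesis is applicable) and simultaneously makes the ground set of $N$ equal to $X\sm I$, and that the circuits of the contraction and of the subsequent deletion are read off correctly from the definitions in \autoref{prelims}. (One may invoke \autoref{minor_closed} to know $N$ is an orthogonality system, though the argument above only uses that $N$ has a base, which is given.)
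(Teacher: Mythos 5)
Your proof is correct and takes essentially the same route as the paper: apply the hypothesis with $A=I$ and $B=E\sm X$ to get a base $J$ of $(\Ccal,\Dcal)/I\sm(E\sm X)$, and then verify that $I\cup J$ is a base of $X$ containing $I$. The paper leaves this verification as ``straightforward to check''; your transfer observation is precisely that check, carried out correctly.
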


\begin{proof}
Given $I$ and $X$ as in (IM), by assumption there is a base $B$ of $(\Ccal,\Dcal)/I\sm (E\sm X)$. It is straightforward to check that
$I\cup B$ is a base of $X$ with respect to $(\Ccal,\Dcal)$.
\end{proof}

In orthogonality systems we already have a notion of connectedness:
We say that two edges $e$ and $f$ are \emph{in the same connected component}
if there is some minimal nonempty $o\in \Ccal$ containing both $e$ and $f$.

\begin{lem}\label{connected_component}
 Being in the same component is an equivalence relation.

Moreover, $e$ is in the same connected component as $f$ if and only if there is some
minimal nonempty $d\in \Dcal$ containing both $e$ and $f$.
\end{lem}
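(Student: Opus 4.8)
The plan is to prove \autoref{connected_component} by combining the orthogonality axiom (O1) with the existence of bases in minors, exploiting the duality between $\Ccal$ and $\Dcal$ that (O1) and (O2) provide.

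First I would deal with the "Moreover" part, namely the equivalence between the circuit-based and cocircuit-based notions of connectedness. Suppose $e$ and $f$ lie in a common minimal nonempty $o\in\Ccal$. Consider the orthogonality system $(\Ccal,\Dcal)\restric_{o}$, which is an orthogonality system by \autoref{minor_closed}. In this restriction, $o$ itself is a base (it is independent since $o$ is a minimal circuit, and it spans since it is the whole ground set of the minor — or more carefully, one checks via \autoref{base_char} that $o\sm e$ is a base and $e$ witnesses a circuit). Then I would apply (O2) inside this minor to the partition $o = \{e\}\,\dot\cup\, (o\sm\{e,f\})\,\dot\cup\,\{f\}$: since $o$ is a circuit and hence contains no proper sub-circuit, the option "$P+f$ includes a circuit through $f$" with $P = o\sm\{e,f\}$ fails (that circuit would be contained in $o\sm e \subsetneq o$), so there must be some $d\in\Dcal$ with $f\in d\se (o\sm e)+f = o\sm e$... wait, that gives $d$ avoiding $e$. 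Let me instead partition as $\{f\}\dot\cup(o\sm\{e,f\})\dot\cup\{e\}$ within the minor so that the cocircuit option forces $e\in d\se o$; then (O1) forces $|d\cap o|\neq 1$, and since $o$ is a circuit one shows $d$ must contain $f$ as well, after taking a minimal such $d$ using \autoref{O3_psi}. The symmetric implication is dual, swapping the roles of $\Ccal$ and $\Dcal$ using the self-dual form of (O1) and (O2). This part I expect to require the most care — getting the partition right so that the right option in (O2) is the one that survives, and then upgrading an arbitrary $d$ to a minimal one containing both $e$ and $f$.

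Second, for the equivalence relation claim: reflexivity needs the convention that we only relate edges that actually lie in some circuit (an edge in no circuit is related only to itself — or one treats loops appropriately), symmetry is immediate, and transitivity is the real content. Given minimal circuits $o_1\ni e,f$ and $o_2\ni f,g$, I would look at the minor $(\Ccal,\Dcal)\restric_{o_1\cup o_2}$ and argue that $e$ and $g$ lie in a common minimal circuit there. The clean way is to use the just-proved cocircuit characterisation together with an analogue of the finite matroid fact that "same component" is transitive: if $e\not\sim g$, then by the cocircuit characterisation there is a partition of the component structure, i.e. one can find a set $X$ with $e\in X$, $g\notin X$ such that no circuit meets both $X$ and its complement — but $f$ lies on one side, contradicting either $e\sim f$ or $f\sim g$. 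Formally I would define $X$ to be the union of all minimal circuits through $e$ (restricted to the relevant minor), show using (O1)/(O2) that no circuit crosses the cut $(X, E\sm X)$, hence $f\in X$ and then $g\in X$, a contradiction.

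The main obstacle will be carefully justifying, inside a minor, the step that turns the failure of one option of (O2) into a cocircuit with prescribed containment, and in particular ensuring minimality of that cocircuit so that (O1) can be leveraged to force it to contain the second distinguished edge. Everything else is bookkeeping with \autoref{minor_closed}, \autoref{O3_psi} (to get minimal circuits and cocircuits inside minors), and \autoref{base_char}; the duality between the two halves of the statement means I only need to do the hard direction once.
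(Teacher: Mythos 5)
The paper itself gives no argument here (its proof is literally ``just as for finite matroids''), so you are trying to supply what the paper omits; your overall plan for the ``Moreover'' part is indeed the standard one, but as written it has two genuine problems, and the transitivity part has a third. For the ``Moreover'' direction: restricting to $o$ and noting that $o$ is the only nonempty element of $\Ccal\restric_o$ is right, but the partition you finally settle on is the wrong one. With $P=\{f\}$, $Q=o\sm\{e,f\}$ and distinguished element $e$, the surviving option of (O2) produces $d\in\Dcal$ with $e\in d\cap o\se o\sm f$; (O1) then only gives $|d\cap o|\ge 2$ and certainly does not force $f\in d$ --- this $d$ avoids $f$ by construction. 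The partition you wrote first and then discarded is the correct one (you just mis-identified $Q$): taking $P=o\sm\{e,f\}$ and $Q=\{f\}$ with distinguished element $e$, the circuit option fails by minimality of $o$, so some $d\in\Dcal$ satisfies $e\in d\cap o\se\{e,f\}$, and (O1) pins $d\cap o=\{e,f\}$. The second problem is the minimality upgrade: appealing to \autoref{O3_psi}, i.e.\ (O3$^*$) with $X=\emptyset$, only gives a $d_1\in\Dcal$ minimal among those members of $\Dcal$ with $e\in d_1\se d$ (any such member does still contain $f$, by the same (O1) argument), but the lemma asks for a minimal nonempty element of $\Dcal$; nothing in your sketch rules out a nonempty $d_2\in\Dcal$ with $d_2\subsetneq d_1$ and $e\notin d_2$ (hence, by (O1), also $f\notin d_2$). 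In the matroid case this step is exactly the fact that cocircuits of a restriction are traces of genuine cocircuits; in the abstract orthogonality-system setting it needs its own argument, which you do not supply.

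The transitivity argument is circular as sketched. If $X$ is the union of all minimal circuits through $e$, then excluding a minimal circuit $c$ that meets $X$ in some $x$ and $E\sm X$ in some $y$ requires producing a minimal circuit containing $e$ and $y$ from one containing $e,x$ together with $c\ni x,y$ --- which is precisely the instance of transitivity you are trying to prove; the vague appeal to (O1)/(O2) does not engage this. In the finite-matroid proof the real engine is circuit elimination applied to a counterexample minimising $|C_1\cup C_2|$ (or a rank/submodularity argument), neither of which is available verbatim for minimal nonempty elements of $\Ccal$ in an orthogonality system; adapting that engine is exactly the content hidden behind the paper's one-line proof, and your plan leaves it unaddressed. (Your reflexivity caveat is fine and minor, and the duality between the two halves of the ``Moreover'' statement is indeed symmetric in $\Ccal$ and $\Dcal$, so that part of the plan is unproblematic.)
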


\begin{proof}
 Just as for finite matroids.
\end{proof}

A \emph{connected component of $(\Ccal, \Dcal)$} is an equivalence class for the equivalence relation ``being in the same component''.

\begin{lem}\label{connected_component}
Given a connected component $X$ of $(\Ccal, \Dcal)$, then $(\Ccal, \Dcal)\restric_X=(\Ccal, \Dcal).X$.
\end{lem}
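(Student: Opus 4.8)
We first reduce to showing $\Ccal\restric_X=\Ccal.X$. Since (O1), (O2) and tameness are unaffected by interchanging $\Ccal$ and $\Dcal$, and since (as established in the previous lemma) being in the same connected component can equally be read off from the minimal nonempty members of $\Dcal$, the pair $(\Dcal,\Ccal)$ is again an orthogonality system and has exactly the same connected components as $(\Ccal,\Dcal)$; in particular $X$ is a component of it. Hence the same argument that proves $\Ccal\restric_X=\Ccal.X$, run in $(\Dcal,\Ccal)$, proves $\Dcal\restric_X=\Dcal.X$, and then $(\Ccal,\Dcal)\restric_X=(\Ccal\restric_X,\Dcal.X)=(\Ccal.X,\Dcal\restric_X)=(\Ccal,\Dcal).X$.

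The inclusion $\Ccal\restric_X\se\Ccal.X$ is immediate: if $o\in\Ccal$ and $o\se X$, then $o=o\sm(E\sm X)$, so $o\in\Ccal.X$. For the reverse inclusion, fix $o\in\Ccal$; I must show $o\cap X\in\Ccal$, for it then automatically lies in $\Ccal\restric_X$. The key point is that $X$ behaves like a separator: every minimal nonempty member $c$ of $\Ccal$ is contained in a single connected component, since any two of its elements lie in a common component and ``lying in the same component'' is transitive. By \autoref{O3_psi} the system satisfies (O3); applying (O3) to $o$ and each of its elements $e$, with the auxiliary set taken to be empty, exhibits $o$ as a union of minimal nonempty members of $\Ccal$. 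Since distinct components are disjoint, $o\cap X$ is exactly the union of those minimal nonempty $c\in\Ccal$ that are contained in $X$ (equivalently, in $o\cap X$); a further application of (O3) then shows that this union is itself a member of $\Ccal$.

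The step I expect to be the real obstacle is this last one: passing from ``no minimal member of $\Ccal$ straddles $X$'' to ``$o\cap X\in\Ccal$ for every $o\in\Ccal$'', i.e.\ checking that $\Ccal$ is stable under restriction to a component also for its non-minimal members. For the minimal members the situation is exactly as for finite matroids, and the work is to push this through to arbitrary $o$; here (O3), together with tameness and (O1) (used to bound how $o$ can meet a cocircuit lying inside $X$), must be applied with some care, and the degenerate cases (loops or coloops, and $o\cap X=\emptyset$) are best handled separately.
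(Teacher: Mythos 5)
Your symmetry reduction and the observation that every minimal nonempty member of $\Ccal$ lies inside a single component are fine, but the step you yourself flag is a genuine gap, and it cannot be closed by (O3) or indeed by any argument using only (O1), (O2), (O3) and tameness. There are two problems. First, (O3) with empty auxiliary set only produces, for each $e\in o$, a member $C_{min}\in\Ccal$ with $e\in C_{min}\se o$ that is minimal \emph{among such members}; it is not automatic that $C_{min}$ is a minimal nonempty member of $\Ccal$ (it might properly contain a nonempty member avoiding $e$), so even your claim that $o$ is a union of minimal nonempty members needs a further argument (it does hold when there is an underlying matroid, e.g.\ for countable $E$ via \autoref{ortho_axioms+}, since then $\Ccal$ lies between the circuits and the scrawls of that matroid). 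Second, and decisively, the passage from ``$o\cap X$ is a union of minimal nonempty members lying in $X$'' to ``$o\cap X\in\Ccal$'' is not a consequence of the axioms at all: orthogonality systems need not contain such unions. Concretely, let $M$ be the direct sum of the cycle matroid of the prism graph (two vertex-disjoint triangles $T_1,T_2$ joined by a matching; this matroid is connected) with a single two-element circuit $B$; put $\Ccal=\Ccal(M)\cup\{T_1\cup T_2\cup B\}$ and $\Dcal=\Ccal(M^*)$. This pair is tame and satisfies (O1) and (O2) (every member of $\Ccal$ is a scrawl of $M$), its minimal nonempty members are exactly the circuits of $M$, and its components are $X=E(\mathrm{prism})$ and $B$; but the trace $(T_1\cup T_2\cup B)\cap X=T_1\cup T_2$ is not in $\Ccal$, so $\Ccal.X\neq\Ccal\restric_X$. (Even without the extra member, $\emptyset$ lies in $\Ccal.X$ but typically not in $\Ccal\restric_X$.)

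So the statement cannot be proved at the level of generality at which you are arguing; the paper's terse ``just as for finite matroids'' presupposes that $\Ccal$ and $\Dcal$ are closed under taking traces on components, which is what the finite-matroid argument really uses. That closure holds for the systems the lemma is actually applied to: for the circuit/cocircuit or scrawl systems of a matroid (a circuit meets only one component), and for systems of the form $(S(V),S(W))$ coming from presentations, where in the finite case $V$ decomposes as a direct sum of its restrictions to the components, so $v\restric_X\in V$ and hence $\supp(v)\cap X\in S(V)$. A correct write-up therefore has to invoke the specific structure of the $\Ccal$ and $\Dcal$ at hand rather than (O1)--(O3) and tameness alone; your instinct that this last step is ``the real obstacle'' was exactly right, but it is not an obstacle that more careful use of (O3) can overcome.
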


\begin{proof}
 Just as for finite matroids.
\end{proof}

\section{A simpler proof in a special case}\label{easy_o2}

The aim of this section is to give a simpler proof of 
a result from \cite{BC:determinacy}, \autoref{thm:loc_fin} below, which implies \autoref{ref2}. 
Given a locally finite graph $G$, a \emph{$\Psi$-circuit of $G$} is the edge set of a topological cycle of $G$ that only uses ends from $\Psi$.
A \emph{$\Psi\ct$-bond of $G$} is the edge set of a bond of $G$ that only has ends of $\Psi\ct$ in its closure.

\begin{thm}\label{thm:loc_fin}
Let $G$ be a locally finite graph and $\Psi$ a Borel set of ends.
Then there is a matroid $M_\Psi(G)$ whose circuits are the $\Psi$-circuits and whose cocircuits are the $\Psi\ct$-bonds. 
\end{thm}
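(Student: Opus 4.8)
My plan is to use the machinery of orthogonality systems built up in the preliminaries, together with the determinacy-based results that are available. Let $\Ccal$ be the set of $\Psi$-circuits of $G$ and $\Dcal$ the set of $\Psi\ct$-bonds. The strategy is: (1) check that $(\Ccal,\Dcal)$ is an orthogonality system, i.e.\ verify (O1), (O2) and tameness; (2) invoke \autoref{ortho_axioms+_cor} (via \autoref{base}) to conclude there is a matroid with these as circuits and cocircuits; and (3) identify that matroid's circuits and cocircuits as exactly $\Ccal$ and $\Dcal$, not merely as a sandwiched pair.

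First I would dispose of the easy parts. Tameness is immediate: a topological circle and a bond of a locally finite graph meet in a finite set (a classical fact from \cite{DiestelBook10}), so the intersection of any member of $\Ccal$ with any member of $\Dcal$ is finite. Axiom (O1) is the statement that no $\Psi$-circuit meets a $\Psi\ct$-bond in exactly one edge; this is the standard orthogonality of circles and bonds in $|G|$, and the end-condition is not even needed here. The substantive axiom is (O2): given a partition $E(G)=P\,\dot\cup\,Q\,\dot\cup\,\{e\}$, I must produce either a $\Psi$-circuit through $e$ inside $P+e$ or a $\Psi\ct$-bond through $e$ inside $Q+e$. This is exactly where the topological content and the hypothesis that $\Psi$ is Borel enter, and it is the main obstacle. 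The natural approach is to set up a positional game on $G$ in which the two players alternately grow a ray (or a nested sequence of separators) with one player trying to force a topological circle through $e$ using $P$-edges and ends in $\Psi$, the other trying to force a bond through $e$ using $Q$-edges and ends outside $\Psi$; the winning condition is a Boolean combination of ``the limit end lies in $\Psi$'' with an arithmetical condition, hence Borel, so the game is determined by Martin's theorem. One then argues that a winning strategy for the first player yields the desired $\Psi$-circuit and a winning strategy for the second yields the desired $\Psi\ct$-bond. Getting the game set up so that a winning strategy genuinely produces a \emph{topological} circle (closed edge set, connected, every vertex even, ends used lie in $\Psi$) rather than something weaker is the delicate bookkeeping step; I expect to use K\"onig's Infinity Lemma (\autoref{Infinity_Lemma}) to extract the limiting ray/end from a strategy.

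Having established that $(\Ccal,\Dcal)$ is an orthogonality system, \autoref{O3_psi} gives (O3) and (O3$^*$) for free. To get a genuine matroid I would then verify (IM) using \autoref{base}: for any disjoint $A,B$ I must show the minor $(\Ccal,\Dcal)/A\sm B$ has a base, and by \autoref{base_char} a base is a set $B'$ such that every edge outside $B'$ lies on a circuit of the minor inside $B'+x$ and every edge of $B'$ lies in a cocircuit inside the complement plus $x$. Since minors of this orthogonality system are again of the same type (restrictions/contractions of $\Psi$-circuit structure correspond to taking topological minors of $G$), I can build such a $B'$ by a direct topological spanning-tree-type argument in $|G|$, or alternatively run the (O2)-game again to certify, edge by edge, membership in the base. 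Then \autoref{ortho_axioms+_cor} produces a unique matroid $M$ with $\Ccal(M)\se\Ccal\se\Scal(M)$ and $\Ccal(M^*)\se\Dcal\se\Scal(M^*)$.

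Finally I must upgrade these inclusions to equalities, i.e.\ show every $\Psi$-circuit is actually a circuit of $M$ (not just a scrawl) and dually. This is where tameness pays off again: in a tame situation a scrawl that is a circuit of the ``wrong'' side can be excluded by an orthogonality argument, since a minimal nonempty member of $\Ccal$ meeting every member of $\Dcal$ finitely and satisfying (O1) must be a circuit. Concretely, a $\Psi$-circuit $C$ is a nonempty scrawl of $M$; if it properly contained a circuit $C'$ of $M$ then $C'\in\Scal(M)=\Scal$-side, and I would derive a contradiction with the minimality built into the definition of topological circle together with (O1) against a suitable $\Psi\ct$-bond separating a single edge of $C\sm C'$ — again produced by the (O2)-game. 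The dual statement for bonds is symmetric. I expect step (2)–(3) to be routine given the framework; the real work, and the only place where Borelness is used, is the (O2) game in step (1).
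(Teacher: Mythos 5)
Your overall route coincides with the paper's: verify (O1), tameness and (O2) for an orthogonality system and then quote the axiomatic results, with (O2) coming from Borel determinacy of a positional game. But the two points you defer as "delicate bookkeeping" are exactly where the proof lives, and one of your assertions is not just delicate but wrong in the symmetric form you state it. First, the game is not played directly on $G$ by "growing a ray": the paper first subdivides to $G'$, takes a tree-decomposition into finite parts (\autoref{td}), and replaces the $\Psi$-circuits and $\Psi\ct$-bonds by the larger families $\Ccal_\Psi,\Dcal_\Psi$ of underlying sets of precircuits/precocircuits, with \autoref{to_cpsi} transferring (O2) back to the circuits and bonds themselves. The finiteness of the torsos is what makes the play-to-end map continuous (so that "the limit end lies in $\Psi$" really is a Borel winning set) and what makes K\"onig's Lemma applicable when extracting objects from strategies; without this reduction your game is not even well specified. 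Second, and more seriously, a winning strategy for the second player does not "yield the desired $\Psi\ct$-bond" by an argument dual to the first-player case: the game is asymmetric (one player plays local even-degree sets avoiding $Q$, the other challenges on adhesion sets), and there is no direct dual extraction. The paper's actual argument is: from Colin's winning strategy $\sigma$ one only concludes that the tree $S_\sigma$ of positions reachable under $\sigma$ has all ends in $\Psi\ct$ (\autoref{winning_strat_Colin}, via K\"onig); one then contracts everything outside the parts indexed by $S_\sigma$, shows the resulting graph $H$ admits no precircuit through $e$ inside $P+e$ with rayless underlying tree (Sarah could otherwise play along it and Colin would eventually be unable to challenge), and only then invokes (O2) for the finite cycle/bond pair (\autoref{emptyset}) in $H$ to produce a precocircuit through $e$ inside $Q+e$, which is a $\Psi\ct$-precocircuit of $G'$ because of the end condition on $S_\sigma$. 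Some such step (or, as in Sections \ref{O2}--\ref{IM} of the general argument, a separate cocircuit game plus a lemma that Colin wins one iff he wins the other) is indispensable, and it is missing from your plan.

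Your step (2) is both unnecessary and, as sketched, not a proof. Since $G$ is locally finite (and one may assume connected), the edge set is countable, so once (O1), (O2) and tameness are established, (O3) and (O3$^*$) follow from \autoref{O3_psi} and \autoref{ortho_axioms+} already yields the matroid; no appeal to (IM) or \autoref{ortho_axioms+_cor} is needed. If you do insist on (IM), the phrase "direct topological spanning-tree-type argument, or run the (O2)-game again edge by edge" does not constitute an argument: constructing a base of $M_\Psi(G)$ for an arbitrary Borel $\Psi$ is genuinely nontrivial (it is essentially the content of \autoref{IM} in the general setting) and should not be waved through. Your final upgrading step is fine but can be done in one line: the $\Psi$-circuits form an antichain, so from $\Ccal(M)\subseteq\Ccal\subseteq\Scal(M)$ every member of $\Ccal$ is a union of circuits of $M$ each of which lies in $\Ccal$, forcing $\Ccal=\Ccal(M)$, and dually for the bonds.
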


Given a locally finite graph $G$ with a tree decomposition $(T,P(t)|t\in V(T))$, 
the \emph{torso $\bar P(t)$} of a part $P(t)$ is the multigraph obtained from $P(t)$
by adding for each neighbour $t'$ of $t$ and any two $v,w\in V(P(t))\cap V(P(t'))$
an edge $(tt',v,w)$ between $v$ and $w$. In this section, we shall only consider tree decompositions with each torso finite.
A \emph{precircuit} is a pair $(S,o)$ where $S$ is a connected subtree of $T$ and 
$o$ sends each $t\in V(S)$ to some $o(t)\se E(\bar P(t))$ that has even degree at each vertex such that for any neighbour $t'$ of $t$ in $T$ we have 
$o(t)\cap E(\bar P(t'))=\emptyset$ if $t'\notin V(S)$, and
$o(t)\cap E(\bar P(t'))=o(t')\cap E(\bar P(t))$ otherwise.
A \emph{precocircuit} is the same as a precircuit except that here $o(t)$ is a cut of $\bar P(t)$ instead of a set that has even degree at each vertex.
Given $\Psi\se \Omega(T)$, a precircuit $(S,o)$ is a \emph{$\Psi$-precircuit} if 
all ends of $S$ are in $\Psi$.
Similarly, one defines a $\Psi$-precocircuit.
We denote the set of underlying sets of $\Psi$-precircuits by $\Ccal_\Psi(G)$, and 
the set of underlying sets of $\Psi\ct$-precocircuits by $\Dcal_\Psi(G)$. 

The following follows from the fact that the finite circuits of any graph are the circuits of a matroid.

\begin{rem}\label{emptyset}
 The pair $(\Ccal_\emptyset(G),\Dcal_\emptyset(G))$ satisfies (O2).
\qed
\end{rem}

It suffices to prove \autoref{thm:loc_fin} for a graph $G'$ obtained from a locally finite graph $G$ by subdividing each edge. Indeed, then $M_\Psi(G)$ is a contraction minor of $M_\Psi(G')$. 
So from now on we fix a graph $G'$ obtained from a connected locally finite graph $G$ by subdividing each edge.\footnote{Mathematically, it is not strictly necessary to work with $G'$ instead of $G$, but 
this way we can cite a lemma from \cite{BC:determinacy}.}
We abbreviate $\Ccal_\Psi=\Ccal_\Psi(G')$ and $\Dcal_\Psi=\Dcal_\Psi(G')$.

\begin{lem}\label{td}[Lemma 7.5, Lemma 7.6 and Lemma 7.7 from \cite{BC:determinacy}]
There is a tree-decomposition $(T,P(t)|t\in V(T))$ of $G'$
with each part finite and a homeomorphism $\iota$ between $\Omega(G')$ and $\Omega(T)$
such that for each $\Psi\se \Omega(G')$, the set of minimal nonempty elements 
in $\Ccal_{\iota (\Psi)}$ is the set of $\Psi$-circuits of $G'$ and set of minimal nonempty elements 
in $\Dcal_{\iota(\Psi)}$ is the set of $\Psi\ct$-bonds of $G'$.

Furthermore, each $P(t)$ is connected and $T$ is locally finite.
\end{lem}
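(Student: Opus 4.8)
The statement packages three things — a tree-decomposition of $G'$ with the listed properties, a homeomorphism $\iota$ of the two end spaces, and the identification of the minimal nonempty members of $\Ccal_{\iota(\Psi)}$ and $\Dcal_{\iota(\Psi)}$ with the $\Psi$-circuits and the $\Psi\ct$-bonds — and the plan is to establish each in turn, thereby reconstructing Lemmas~7.5--7.7 of \cite{BC:determinacy}. For the tree-decomposition, fix a root vertex $v_0$ and for $n\ge 0$ let $B_n$ be the ball of radius $n$ about $v_0$ in $G'$; since $G'$ is locally finite each $B_n$ is finite, and each component of $G'-B_n$ contains a neighbour of the sphere $S_n$, of which there are only finitely many, so $G'-B_n$ has only finitely many components. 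Take the nodes of $T$ of level $n+1$ to be the components $C$ of $G'-B_n$, with $(n+1,C)$ joined to $(n,C')$ whenever $C\se C'$, together with a root at level $0$ joined accordingly; then $T$ is a locally finite tree. Rather than letting the part at $(n+1,C)$ be $V(C)\cap S_{n+1}$, which need not be connected, I would take a finite connected subgraph of $G'[V(C)]$ containing $V(C)\cap S_{n+1}$ and, for each child $(n+2,C')$, the finite attachment set $N(C')\cap V(C)$; these can be chosen coherently, and the hypothesis that $G'$ is a subdivision lets one conveniently take the resulting adhesion sets to consist of degree-$2$ vertices, which keeps the torsos $\bar P(t)$ simple. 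One then checks this really is a tree-decomposition in the sense of \autoref{prelims}, with finite connected parts.

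\emph{The homeomorphism.} By construction the level-$(n+1)$ nodes of $T$ are canonically the components of $G'-B_n$, compatibly with the maps ``pass to the containing component'', and since $T$ is a layered locally finite tree the components of $T$ minus its nodes of level $\le n$ are canonically this same finite set. Thus $\Omega(G')$ and $\Omega(T)$ are the inverse limits of two isomorphic inverse systems of finite discrete spaces, so the induced bijection $\iota\colon\Omega(G')\to\Omega(T)$ — sending an end $\omega$ to the end of $T$ along the nested components $C_n$ of $G'-B_n$ that absorb the tails of $\omega$ — is a homeomorphism of compact spaces. In particular $\iota$ maps the basic clopen set ``ends with a tail in $C_n$'' onto ``ends of $T$ through the node $(n+1,C_n)$''.

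\emph{Circuits and cocircuits.} It remains to show that the minimal nonempty members of $\Ccal_{\iota(\Psi)}$ are exactly the $\Psi$-circuits and those of $\Dcal_{\iota(\Psi)}$ exactly the $\Psi\ct$-bonds. Given a $\Psi$-circuit, realised by a topological circle $\theta$ in $|G'|$ all of whose ends lie in $\Psi$, I would let $S$ be the subtree spanned by the nodes whose part meets $\theta$ and, for $t\in S$, let $o(t)$ consist of the edges of $\theta$ lying in $\bar P(t)$, each arc by which $\theta$ detours through a neighbouring part being recorded as the corresponding virtual edge; a parity-and-coherence computation (crucially using that each edge lies in a unique part, and that $\theta$ crosses each adhesion set an even number of times) shows that $o(t)$ has even degree everywhere and that the $o(t)$ cohere, so $(S,o)$ is a precircuit, its ends are precisely the ends of $\theta$ and hence lie in $\iota(\Psi)$ by the correspondence of clopen sets above, and since a topological circle has no proper nonempty even subgraph the edge set of $\theta$ is a minimal nonempty member of $\Ccal_{\iota(\Psi)}$. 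Conversely, from a minimal nonempty $(S,o)$ I would reassemble the real edges into an even subgraph $H$ of $G'$ and argue, using minimality (to exclude proper sub-circles), connectedness of $S$, and K\"onig's Lemma (\autoref{Infinity_Lemma}) along $S$, that $H$ is the edge set of a single topological circle whose ends are the ends of $S$ and so lie in $\Psi$. The statement about $\Dcal_{\iota(\Psi)}$ and $\Psi\ct$-bonds is the exact dual, with ``even subgraph'' replaced by ``cut'' and circles by bonds throughout. I expect this last part to be the main obstacle — in particular, verifying that a minimal precircuit really traces out an honest topological circle in $|G'|$ (controlling the behaviour at ends, where wild circles may occur) and that minimality passes in both directions; by comparison the tree-decomposition and the homeomorphism are routine.
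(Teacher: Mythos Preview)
The paper does not give its own proof of this lemma: it is stated with the bracketed citation to Lemmas~7.5--7.7 of \cite{BC:determinacy} and then used without further argument. So there is nothing in the present paper to compare your attempt against.

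That said, your sketch is a plausible reconstruction along standard lines. The ball-decomposition around a root is the natural way to obtain a locally finite tree-decomposition of a connected locally finite graph into finite connected parts, and identifying $\Omega(G')$ with $\Omega(T)$ as inverse limits of the same system of components is the expected argument. You are right to flag the final part as the real obstacle: showing that a minimal nonempty $\iota(\Psi)$-precircuit yields an honest topological circle in $|G'|$ (and not, say, a disjoint union of cycles or something wilder at the ends), and that minimality transfers in both directions, is where the substance of the cited lemmas presumably lies; your outline gestures at this but does not supply it. One smaller point to watch: the paper's convention in \autoref{prelims} requires each edge of $G'$ to lie in a \emph{unique} part, and your informal description of the parts does not yet guarantee this --- you would need to allocate edges explicitly to avoid overlap between adjacent parts.
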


To simplify notation, we shall suppress the bijection $\iota$ from now on.
\begin{lem}\label{o1_tame}[Lemma 7.2 and Lemma 7.7 from \cite{BC:determinacy}]
The pair $(\Ccal_\Psi,\Dcal_\Psi)$ satisfies (O1) and tameness.
\end{lem}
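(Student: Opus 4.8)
The plan is to establish tameness and (O1) simultaneously, exploiting that a $\Psi$-precircuit and a $\Psi\ct$-precocircuit are supported on subtrees of $T$ that share no end. Fix a $\Psi$-precircuit $(S,o)$ with underlying set $C\in\Ccal_\Psi$ and a $\Psi\ct$-precocircuit $(S',o')$ with underlying set $D\in\Dcal_\Psi$. Each edge $e$ of $G'$ lies in a unique part $P(t)$, and the real edges of $P(t)$ occur in no other torso; so $e\in C$ forces $e\in o(t_e)$ for the unique $t_e$ with $e\in E(P(t_e))$, whence $t_e\in V(S)$, and likewise $e\in D$ forces $t_e\in V(S')$. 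Thus $C\cap D\subseteq\bigcup_{t\in V(S)\cap V(S')}E(P(t))$, and moreover $e\in C\cap D$ if and only if $e\in o(t_e)\cap o'(t_e)$.

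For tameness I would show that $V(S)\cap V(S')$ is finite. As the intersection of two subtrees of $T$, the subgraph $S\cap S'$ is connected (possibly empty); were it infinite then, $T$ being locally finite, \autoref{Infinity_Lemma} would produce a ray in $S\cap S'$, and the end of $T$ it determines would be an end of $S$ and an end of $S'$ at once — impossible, since the former lies in $\Psi$ and the latter in $\Psi\ct$ and these are disjoint. Hence $S\cap S'$ is finite; as each $P(t)$ is finite, $C\cap D$ is finite.

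For (O1) I would show that $|C\cap D|$ is even, which gives $|C\cap D|\neq 1$. In the finite torso $\bar P(t)$ the set $o(t)$ has even degree at every vertex, so it lies in the cycle space over $\mathbb F_2$, while $o'(t)$ is a cut, hence lies in the cut space; these two spaces are orthogonal, so $|o(t)\cap o'(t)|$ is even for each $t\in V(S)\cap V(S')$. Using the compatibility clauses in the definitions of precircuit and precocircuit, I would check that a virtual edge $(tt',v,w)$ lying in $o(t)\cap o'(t)$ also lies in $o(t')\cap o'(t')$, and in no further such set, which forces $t,t'\in V(S)\cap V(S')$; whereas each real edge of $C\cap D$ lies in exactly one set $o(t_e)\cap o'(t_e)$. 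Summing the finite family of even numbers $|o(t)\cap o'(t)|$ over $t\in V(S)\cap V(S')$ therefore yields $|C\cap D|+2N$, where $N$ counts the virtual edges that occur; so $|C\cap D|$ is even.

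The only delicate point is the bookkeeping in the last paragraph: one must keep track that ``underlying set'' refers to the real edges only, that a real edge singles out one node of $T$, and that the compatibility conditions make the virtual edges cancel in pairs; granting this, the parity count is immediate, and the topological input (no shared end, hence finite overlap by \autoref{Infinity_Lemma}) is what makes the sum finite. Alternatively, as the statement is quoted verbatim as Lemmas 7.2 and 7.7 of \cite{BC:determinacy}, one could simply cite those.
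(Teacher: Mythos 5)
Your argument is correct, and it is worth noting that the paper itself gives no proof at all here: the lemma is stated purely as a citation of Lemmas 7.2 and 7.7 of \cite{BC:determinacy}, which is also the fallback you mention in your last sentence. Your self-contained argument is sound and is essentially the standard one underlying those cited lemmas: since every real edge of $G'$ lies in a unique part, $C\cap D$ is confined to the parts indexed by $V(S)\cap V(S')$, and the intersection of the two subtrees is a connected, locally finite tree with no end (an end of it would lie in both $\Psi$ and $\Psi\ct$), hence finite by K\"onig's Lemma, giving tameness; and the $\mathbb{F}_2$-orthogonality of cycle space and cut space in each finite torso, combined with the compatibility conditions forcing each shared virtual edge $(tt',v,w)$ to be counted at exactly the two nodes $t,t'$ while each real edge is counted once, yields $\sum_{t\in V(S)\cap V(S')}|o(t)\cap o'(t)|=|C\cap D|+2N$, so $|C\cap D|$ is even and in particular never $1$, which is (O1). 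The bookkeeping step you flag as delicate does go through exactly as you describe (nonemptiness of $o(t)\cap E(\bar P(t'))$ forces $t'\in V(S)$, and likewise for $S'$), so no gap remains; the only difference from the paper is that you prove the statement rather than outsource it.
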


\begin{lem}\label{to_cpsi}
The pair $(\Ccal_\Psi,\Dcal_\Psi)$ satisfies (O2) if and only if the  
pair consisting of the set of $\Psi$-circuits and the set of $\Psi\ct$-bonds does.
\end{lem}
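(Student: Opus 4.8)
The statement asks to compare (O2) for the pair $(\Ccal_\Psi,\Dcal_\Psi)$ of underlying sets of $\Psi$-precircuits / $\Psi\ct$-precocircuits with (O2) for the pair $(\Ccal',\Dcal')$ consisting of honest $\Psi$-circuits and $\Psi\ct$-bonds. The key structural fact, supplied by \autoref{td}, is that the minimal nonempty members of $\Ccal_\Psi$ are exactly the $\Psi$-circuits, and the minimal nonempty members of $\Dcal_\Psi$ are exactly the $\Psi\ct$-bonds. So the two pairs have the same ``minimal'' data and differ only by the passage to arbitrary (non-minimal) precircuits/precocircuits. The plan is to show that (O2) is insensitive to this difference, using that (O2) only ever needs an element *through* a prescribed element $e$, and that — by minimality — one can always shrink a precircuit containing $e$ to a $\Psi$-circuit still containing $e$, and dually.

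Concretely, I would argue both directions as follows. Suppose first $(\Ccal',\Dcal')$ satisfies (O2), and let $E=P\,\dot\cup\,Q\,\dot\cup\{e\}$ be a partition of the ground set of $G'$. By (O2) for the circuit/bond pair, either $P+e$ contains a $\Psi$-circuit through $e$ or $Q+e$ contains a $\Psi\ct$-bond through $e$. But every $\Psi$-circuit is (the underlying set of) a $\Psi$-precircuit, hence lies in $\Ccal_\Psi$, and likewise every $\Psi\ct$-bond lies in $\Dcal_\Psi$; so the same disjunction witnesses (O2) for $(\Ccal_\Psi,\Dcal_\Psi)$. For the converse, assume $(\Ccal_\Psi,\Dcal_\Psi)$ satisfies (O2) and take the same kind of partition. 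By (O2) we get either some $o\in\Ccal_\Psi$ with $e\in o\se P+e$, or some $d\in\Dcal_\Psi$ with $e\in d\se Q+e$. In the first case $o$ is the underlying set of a $\Psi$-precircuit $(S,\bar o)$; I would like to extract from it a $\Psi$-circuit through $e$ inside $o$. Here is where \autoref{td} does the work: the minimal nonempty elements of $\Ccal_\Psi$ below $o$ are $\Psi$-circuits, and I need one that still contains $e$. Using \autoref{O3_psi} (every orthogonality system satisfies (O3)), applied with $C=o$, the element $e$, and $X=\emptyset$, there is a minimal $C_{min}\in\Ccal_\Psi$ with $e\in C_{min}\se o$; minimality forces $C_{min}$ to be a minimal nonempty element of $\Ccal_\Psi$, hence a $\Psi$-circuit, and it is contained in $P+e$ and passes through $e$. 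The dual argument (using (O3$^*$), legitimate by \autoref{O3_psi} since tameness and (O1) hold by \autoref{o1_tame}, and (O2) we are assuming) handles the cocircuit case. This gives (O2) for the circuit/bond pair.

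The main obstacle — and the only genuine content beyond bookkeeping — is the converse direction's need to *shrink* a precircuit through $e$ to a $\Psi$-circuit through $e$ while staying inside $P+e$. One has to be slightly careful that (O3) is actually available: it is, because by \autoref{o1_tame} the pair $(\Ccal_\Psi,\Dcal_\Psi)$ satisfies (O1) and tameness, and together with the hypothesis (O2) of this direction it is an orthogonality system, so \autoref{O3_psi} applies and yields both (O3) and (O3$^*$). One subtlety to double-check is that ``$C_{min}$ minimal in $\Ccal_\Psi$ with $e\in C_{min}$'' really coincides with ``$C_{min}$ is a minimal nonempty element of $\Ccal_\Psi$'': since $e\in C_{min}$ it is nonempty, and any nonempty $o'\subsetneq C_{min}$ either contains $e$ (contradicting minimality of $C_{min}$ among sets through $e$ inside $C_{min}$, taking $X=\emptyset$ and re-applying (O3)) or, if it avoids $e$, one still has to rule it out — this is exactly the standard fact that in the presence of (O1)+(O2) a set through $e$ that is inclusion-minimal among such sets is already inclusion-minimal among all nonempty elements, which can be seen by another application of (O3) inside $o'$. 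I would spell this out in one line. Everything else is immediate from the definitions and the cited lemmas.
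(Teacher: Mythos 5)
Your argument is correct and is essentially the paper's own proof: the direction from the circuit/bond pair to $(\Ccal_\Psi,\Dcal_\Psi)$ is immediate because, by \autoref{td}, every $\Psi$-circuit lies in $\Ccal_\Psi$ and every $\Psi\ct$-bond lies in $\Dcal_\Psi$, and for the converse the paper likewise combines \autoref{o1_tame} with the assumed (O2) to get an orthogonality system, invokes \autoref{O3_psi}, and shrinks the (O2)-witness to a minimal element, which is a $\Psi$-circuit (or dually a $\Psi\ct$-bond) by \autoref{td}. Concerning the subtlety you flag, ``another application of (O3) inside $o'$'' is not by itself a proof (to identify the resulting minimal-through-$f$ set as a circuit you would need the very fact being established); the clean way to see that an element of $\Ccal_\Psi$ minimal subject to containing $e$ is a minimal nonempty element --- a point the paper's proof also leaves implicit --- is \autoref{ortho_axioms+} (the ground set is countable), which sandwiches $\Ccal_\Psi$ between the circuits and the scrawls of a matroid, so any element through $e$ contains a circuit through $e$ and no circuit properly contains a nonempty scrawl.
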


\begin{proof}
One implication is obvious, for the other assume that
 $(\Ccal_\Psi,\Dcal_\Psi)$ satisfies (O1), (O2) and tameness.
Then by \autoref{O3_psi} $(\Ccal_\Psi,\Dcal_\Psi)$ satisfies (O3) and (O3$^*$).
Now let $E=P\dot\cup Q\dot\cup \{e\}$ be a partition.
If there is some $o\in \Ccal_\Psi$ with $e\in o\se P+e$, we can pick it minimal
by (O3), thus $o$ is a $\Psi$-circuit.
Otherwise by (O2), there is some $d\in \Dcal_\Psi$ with $e\in d\se Q+e$, and we conclude in the same way as above, which completes the proof.
\end{proof}

By \autoref{ortho_axioms+}, \autoref{o1_tame} and \autoref{to_cpsi}, the set of $\Psi$-circuits and the set of $\Psi\ct$-bonds are the sets of circuits and cocircuits of a matroid if and only if $\Ccal_\Psi$ and $\Dcal_\Psi$ satisfy (O2). Thus to prove \autoref{thm:loc_fin}, it suffices to show that $(\Ccal_\Psi,\Dcal_\Psi)$ satisfies (O2). 
So let $E(G')=P\dot\cup Q\dot\cup \{e\}$ be a partition.

We consider $T$ as a rooted tree rooted at the unique node $t_e$ such that $e\in E(P(t_e))$.
We consider the following positional game $(D,a,\Phi)$ played between two players called Sarah and Colin, where Sarah makes the first move.
$D$ is a directed graph whose underlying graph is bipartite with bipartition $(D_1,D_2)$.
The set $D_1$ is the union of sets $X(t)$, 
one for each $t\in V(T)$, where $X(t)$ is the set of those $F\se E(\bar P(t))\sm Q$ that have even degree at each vertex. 
The set $D_2$ contains the singleton of the starting vertex $a$ and includes the union of sets $Y(tt')$
one for each $tt'\in E(T)$, where $Y(tt')$ is the powerset of $E(\bar P(t))\cap E(\bar P(t'))$ without the empty set.  
We have a directed edge from $a$ to any $x\in X(t_e)$ containing $e$.
For $tt'\in E(T)$ directed away from the root, we have an edge from $x\in X(t)$ to $y\in Y(tt')$ if
$x\cap E(\bar P(t'))=y$. 
We have an edge from $y\in Y(tt')$ to $x\in X(t)$ if
$x\cap E(\bar P(t))=y$.

In order to complete the definition of the positional game, it remains to define $\Phi$. 
Given a play $Z$, by $Z[n]$ we denote the unique node $t$ with distance $n$ from $t_e$ such that the $2n+1\supst$ move of $Z$ is in $X(t)$.
For each infinite play $Z$, the sequence $(Z[n]|n\in \Nbb)$ is a ray of $T$ which belongs to some end $\omega_Z$. 
Let $f$ be the map from the space of infinite positional plays to the space of ends of $G$ defined via $f(Z)=\omega_Z$. 
It is straightforward to check that the map $f$ is continuous.
Let $\Phi$ be the inverse image of $\Psi$ under $f$. 
As being Borel is preserved under inverse images of continuous maps, we get the following.

\begin{rem}\label{pos_det}
 If $\Psi$ is Borel, then $\Phi$ is Borel and thus the positional game is determined.
\end{rem}

\begin{lem}\label{winning_strat_Sarah}
 If Sarah has a winning strategy, there is some $o\in \Ccal_\Psi$ with $e\in o\se P+e$.
\qed
\end{lem}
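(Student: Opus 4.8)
The plan is to extract a $\Psi$-precircuit through $e$ from a winning strategy $\sigma$ for Sarah. The key observation is that Sarah's moves always land in sets $X(t)$ of even-degree subgraphs avoiding $Q$, and a winning play traces out a ray whose end lies in $\Psi$; so if I can glue Sarah's choices together coherently along a subtree, the compatibility conditions built into the definition of the game edges will force the result to be a precircuit, while the winning condition $\Phi = f^{-1}(\Psi)$ will ensure the subtree has all ends in $\Psi$.

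First I would set up a bookkeeping device. Against $\sigma$, Colin has many possible plays; I want to consider, for each node $t$ reachable by following $\sigma$, the set $S$ of all nodes that occur as $Z[n]$ for some play $Z$ consistent with $\sigma$, together with the ``union'' $o(t)$ of the subgraphs Sarah chooses at $t$. The danger is that Sarah might make different choices at $t$ depending on Colin's earlier moves, so $o(t)$ would not be well-defined. To handle this, I would first invoke the game-simplification lemma promised in Section~\ref{games} (the excerpt says such a lemma is proved there) to replace $\sigma$ by a winning strategy whose move at any node depends only on the node and the single incoming edge-set $y \in Y(tt')$ along which play arrived --- equivalently, a ``positional'' or ``memoryless'' strategy for Sarah. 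With such a strategy, Sarah's response at $t$ is determined by the set $y = o(\text{parent})\cap E(\bar P(t'))$ passed in, so each node $t\in S$ carries a single well-defined set $o(t)\in X(t)$, and by construction $o(t)$ has even degree at each vertex and misses $Q$, while $e\in o(t_e)$.

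Next I would verify that $(S,o)$ is a precircuit. Connectedness of $S$ as a subtree is immediate since $S$ is closed under taking the parent in the rooted tree $T$ (every play passes through all ancestors). For the compatibility condition: if $t\in S$ and $t'$ is a neighbour with $t'\notin S$, then no play consistent with $\sigma$ that is at $t$ proceeds to $t'$; but the only reason play would not proceed is that $o(t)\cap E(\bar P(t')) = \emptyset$ (otherwise there is an edge from $o(t)$ to $y := o(t)\cap E(\bar P(t'))\in Y(tt')$, and Colin could take it), so $o(t)\cap E(\bar P(t'))=\emptyset$ as required. If instead $t'\in S$, then some play goes $\dots\, o(t)\, y\, o(t')\dots$ with $y = o(t)\cap E(\bar P(t')) = o(t')\cap E(\bar P(t))$ by the definitions of the two edge types of $D$, giving exactly the matching condition. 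Finally, $o := \bigcup_{t\in S} o(t)$ satisfies $e\in o\subseteq P+e$: membership of $e$ is clear, and no edge of $Q$ appears since each $o(t)\subseteq E(\bar P(t))\sm Q$; and $o\in\Ccal_\Psi$ because every end of $S$ arises as $\omega_Z = f(Z)$ for an infinite play $Z$ belonging to $\sigma$ --- here I use König's Infinity Lemma (\autoref{Infinity_Lemma}) on the levels of $S$ to produce, for any ray of $S$, an actual infinite $\sigma$-play realizing it --- and since $\sigma$ is winning, $Z\in\Phi = f^{-1}(\Psi)$, so $\omega_Z\in\Psi$.

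The main obstacle is the well-definedness of $o(t)$, i.e.\ ensuring Sarah's choices can be made to depend only on local data; this is precisely what the strategy-simplification lemma of Section~\ref{games} is designed to provide, so the bulk of the real work has been offloaded there, and the remaining argument is a routine check that the compatibility axioms of a precircuit are literally transcriptions of the adjacency relation in $D$. A secondary point to be careful about is that $S$ must be nonempty and contain $t_e$ --- this follows because $a$ has at least one out-edge (Sarah can always move: the required even-degree subgraph through $e$ inside $E(\bar P(t_e))\sm Q$ exists by \autoref{emptyset}, or more simply because if Sarah had no legal first move the game would be trivially unwinnable for her), and a winning strategy in particular prescribes a first move.
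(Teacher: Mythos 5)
Your overall plan---read a $\Psi$-precircuit off Sarah's strategy, with the compatibility conditions coming from the edge relation of $D$ and membership of the ends in $\Psi$ coming from the winning condition---is the right one, and your verification of the precircuit conditions and of $e\in o\se P+e$ is essentially correct. The genuine gap is precisely the step you yourself identify as carrying the main weight: the well-definedness of $o(t)$. You justify it by replacing $\sigma$ with a \emph{memoryless} strategy, attributing such a reduction to \autoref{games}. But \autoref{games} proves no such thing: \autoref{reduced} produces a \emph{reduced} strategy, which is a minimality condition with respect to a well-order on positions and still allows Sarah's move to depend on her own earlier moves; it is not positionality. Nor can you appeal to general theory, since Borel determinacy does not provide memoryless winning strategies for arbitrary Borel winning conditions. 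So, as written, the crucial step of your argument rests on a lemma that does not exist in the form in which you use it.

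The repair is easy and needs no modification of $\sigma$ at all: in this game Colin has no freedom beyond choosing which child to challenge, because from a position $(t,x)$ his only legal move towards a child $t'$ is the set $x\cap E(\bar P(t'))$ (legal only if nonempty). Hence, by induction along $T$, every node $t$ is reached by at most one play consistent with $\sigma$ (Sarah's responses are unique by the definition of a strategy, and Colin's moves are forced once the path to $t$ is fixed), so $o(t)$, taken to be Sarah's move in that unique play, is well defined for an arbitrary winning strategy; your compatibility and end arguments then go through as you wrote them. In fact K\"onig's Lemma is also unnecessary on Sarah's side: the plays reaching the nodes of any ray of $S$ are nested, so their union is already an infinite play belonging to $\sigma$; the compactness argument is only needed on Colin's side (\autoref{winning_strat_Colin}), where Sarah's responses to a fixed challenge are not unique. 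Two small points: the underlying set should be $\bigl(\bigcup_{t\in S}o(t)\bigr)\cap E(G')$, i.e.\ with the dummy edges discarded, for the inclusion in $P+e$ to be meaningful; and note that your worry is genuine in the vector-valued setting of \autoref{O2}, which is exactly where the reduced strategies of \autoref{games} are actually needed.
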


Given a winning strategy $\sigma$, then $S_{\sigma}$ is the induced subforest of $T$
whose nodes $t$ are those such that some $F\in X(t)$ appears as a move in a play according to $\sigma$ or are $t_e$. Note that $S_{\sigma}$ is connected.

\begin{lem}\label{winning_strat_Colin}
 If Colin has a winning strategy $\sigma$, then $S_{\sigma}$ 
has all ends in $\Psi\ct$.
\end{lem}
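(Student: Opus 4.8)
The plan is to argue contrapositively: suppose $S_\sigma$ has an end $\omega \notin \Psi$, and derive that Colin does not in fact win every play consistent with $\sigma$. The key point is that an end $\omega$ of $S_\sigma$ is a ray $t_e = t_0 t_1 t_2 \ldots$ all of whose nodes appear in plays according to $\sigma$; we want to stitch together a single infinite play $Z$ consistent with $\sigma$ whose node-sequence $(Z[n] \mid n \in \Nbb)$ is exactly this ray, so that $f(Z) = \omega \notin \Psi$, hence $Z \notin \Phi$, so the first player (Sarah) wins $Z$ — contradicting that $\sigma$ is winning for Colin (the second player).

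The main work is producing that play. First I would record the shape of a $\sigma$-play near the root: Colin's strategy $\sigma$ determines Colin's response after every legal Sarah move, and by definition of $S_\sigma$ each $t_i$ on the ray carries some $F \in X(t_i)$ that occurs in \emph{some} $\sigma$-play. The obstacle is that a priori these witnessing plays for the different $t_i$ need not be compatible — the play reaching $t_{i+1}$ need not pass through the same position at $t_i$. To fix this I would build $Z$ greedily along the ray: having constructed the finite initial segment of $Z$ up to a position $x_i \in X(t_i)$ that lies on a $\sigma$-play, I need that Colin, moving next from $x_i$, is forced along the edge of $T$ towards $t_{i+1}$ (not back towards the root and not down a different branch), or else that Sarah can nonetheless steer the play so that it continues through $t_{i+1}$. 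Here I would invoke the structure of $D$: from $x_i \in X(t_i)$ the out-edges go to the sets $Y(t_i t')$ for children $t'$ of $t_i$, so it is Colin's move to pick which child to descend into (or he may be stuck). Since the node $t_{i+1}$ appears in a $\sigma$-play, I would use the tree structure plus a suitable compactness / König argument over the finitely many positions available at each bounded depth to extract a \emph{single} infinite $\sigma$-play whose nodes are cofinal with (indeed equal to) the ray — essentially: the set of finite $\sigma$-plays whose node-sequence is an initial segment of $t_0 t_1 \ldots$ is an infinite, finitely-branching tree (finitely branching because each $X(t)$ and each $Y(tt')$ is finite, as the torsos $\bar P(t)$ are finite), hence by K\"onig's Infinity Lemma (\autoref{Infinity_Lemma}) contains a ray, which is the desired play $Z$.

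Once $Z$ is in hand, $Z[n] = t_n$ for all $n$ by construction, so $\omega_Z = \omega \notin \Psi$, whence $Z \notin \Phi = f^{-1}(\Psi)$; therefore Sarah wins the play $Z$, contradicting that $\sigma$ is a winning strategy for Colin. This forces every end of $S_\sigma$ to lie in $\Psi$, equivalently in $\Psi\ct$'s complement — wait, we need them in $\Psi\ct$; but note the game for this half is set up with $\Phi$ the \emph{Colin-losing} condition, so ``Colin wins'' means the play avoids $\Psi$, i.e.\ lands in $\Psi\ct$, so each end of $S_\sigma$ is in $\Psi\ct$, as claimed. The one step I expect to be genuinely delicate is the extraction of the compatible play: one must check that the tree of partial $\sigma$-plays tracking the ray really is infinite — i.e.\ that for each $n$ there is \emph{some} $\sigma$-play whose node sequence begins $t_0 \ldots t_n$ — which should follow from connectedness of $S_\sigma$ together with the closure of $\sigma$ under $2$-truncation and the definition of $S_\sigma$, but needs to be spelled out carefully. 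The rest is routine.
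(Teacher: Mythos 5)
Your proposal is correct and follows essentially the same route as the paper: for each end of $S_\sigma$ you use the definition of $S_\sigma$ (together with truncation-closure of $\sigma$ and the fact that plays descend along root-paths of $T$) to get, for every $n$, a finite $\sigma$-play tracking $t_0\ldots t_n$, and then apply K\"onig's Infinity Lemma to the finitely branching tree of such partial plays to extract a single infinite play belonging to $\sigma$, whose end must lie in $\Psi\ct$ because $\sigma$ is winning for Colin. The only blemish is the initial ``contradiction'' framing (a play $Z\notin\Phi$ is won by Colin, not by Sarah), but your closing correction turns this into exactly the direct argument the paper gives.
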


\begin{proof}
Assume that $S_\sigma$ has an end and let $\omega$ be an arbitrary end of $S_{\sigma}$.
Then there is a ray $t_1t_2...$ included in  $S_{\sigma}$ that belongs to $\omega$ with $t_1=t_e$.
For each $t_it_{i+1}$, we pick some play $P_i$ in $\sigma$
with Colin's last move in $Y(t_it_{i+1})$.

For each $t_it_{i+1}$, we denote by $Z_i$ the set 
of the initial plays of length $i$ of some $P_j$ with $j\geq i$.
As $T$ is locally finite and as each torso $\bar P(t)$ is finite, there are only finitely many 
possible plays ending with a move in $Y(t_it_{i+1})$. Hence $Z_i$ is finite.

We apply K\"onig's Infinity Lemma to the graph $H$ whose vertex set is the union of the $Z_i$. 
We join $a_i\in Z_i$ with $a_{i+1}\in Z_{i+1}$ by an edge if $a_{i+1}$ is an extension of $a_i$.
By the Infinity Lemma, we get a ray $x_1x_2...$ in $H$.
By construction $x_i$ is in $\sigma$. As $\sigma$ is winning, the union of all these plays is in $\Phi\ct$.
Thus $\omega$ is in $\Psi\ct$, which completes the proof.
\end{proof}

\begin{proof}[Proof of \autoref{thm:loc_fin}.]
As mentioned above, it suffices to show that $(\Ccal_\Psi,\Dcal_\Psi)$ satisfies (O2).
By \autoref{pos_det}, either Sarah or Colin has a winning strategy in the positional game above.
By \autoref{winning_strat_Sarah} we may assume that Colin has a winning strategy.

Let $H$ be the graph obtained from $G'$ by contracting all edges
not in any $P(t)$ with $t\in V(S_{ \sigma})$. 
We obtain $\tilde P(t)$ from $  P(t)$ by contracting all dummy edges
$(tt',v,w)$ with $t'\notin V(S_{ \sigma})$.
By the ``Furthermore''-part of \autoref{td}, it is clear that $H$ has a tree decomposition with tree $S_{ \sigma}$ whose torsos are the $\tilde P(t)$. 

Suppose for a contradiction that there is some 
$o\in \Ccal_{\emptyset}(H)$ with $e\in o\se P+e$.
Let $(S,\bar o)$ be an $\emptyset$-precircuit with underlying set $o$.
If Sarah always plays $\bar o(t)$ in the positional game above, 
Colin always challenges her at some $v\in V(S_{ \sigma})$ when he plays according to $ \sigma$. As $S$ is rayless, eventually Colin cannot challenge, which contradicts the fact that  $ \sigma$ is winning.

Thus there cannot be such an $o$.
Thus by \autoref{emptyset}, there is some $d\in \Dcal_{\emptyset}(H)$ with $e\in d\se Q+e$.
Then $d\in \Dcal_{\Psi}(G')$ since all ends of $S_{ \sigma}$ are in $\Psi\ct$
by \autoref{winning_strat_Colin}. This completes the proof.
\end{proof}

\section{Simplifying winning strategies}\label{games}

In this section we prove \autoref{reduced} which allows us to improve winning strategies in positional games. Our proof of \autoref{main_intro} will rely on the determinacy of certain such games: this is why the set $\Psi$ is required to be Borel.

Given a set $\sigma$ of plays, by $\sigma(m)$ we denote  
the set of those moves that appear as $m$-th moves in plays of $\sigma$.
Given two finite or infinite plays  $P=p_1 \ldots $ and $Q=q_1 \ldots $ of the same length,
then $P\sim_1 Q$ if the first player makes the same moves in both plays, that is, 
$p_{i}=q_{i}$ for all odd $i$.
A winning strategy $\sigma$ for the first player is \emph{reduced} if 
there exists a total ordering $\leq$ of 
the set of positions with the following property: 
for any two plays $P=p_1...p_{2n+1}$ and $Q=q_1...q_{2n+1}$ in $\sigma$
such that $p_1...p_{2n-1}\sim_1 q_1...q_{2n-1}$ 
and $p_1...p_{2n}q_{2n+1}$ is a legal play, we have $p_{2n+1}\leq q_{2n+1}$.

\begin{lem}\label{reduced}
Let $\Gcal$ be a positional game whose set $\Phi$ of winning conditions is closed under $\sim_1$.
If the first player has a winning strategy $\sigma$, then 
the first player has a reduced winning strategy.
\end{lem}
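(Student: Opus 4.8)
The plan is to build a reduced winning strategy $\tau$ out of $\sigma$ by, at each stage, always choosing the $\leq$-least legal first-player response that keeps us inside $\sigma$ \emph{up to $\sim_1$}. More precisely, fix an arbitrary total order $\leq$ on the set of positions (any one will do, since the reducedness condition only constrains $\tau$, not $\sigma$). We define $\tau$ by recursion on the length of plays. Suppose $P = p_1\ldots p_{2n} \in \tau$ (the empty play being in $\tau$ by fiat) and $m$ is a second-player move with $Pm$ legal. Among all plays $Q = q_1\ldots q_{2n}q_{2n+1} \in \sigma$ with $q_1\ldots q_{2n} \sim_1 p_1\ldots p_{2n}$ (equivalently $q_1\ldots q_{2n-1}\sim_1 p_1\ldots p_{2n-1}$, since the last second-player move is irrelevant to $\sim_1$) and with $q_1\ldots q_{2n}\, m$ also legal — wait, we should be careful: we want $Q$'s position after $2n$ moves to admit $m$ as a second-player move, and for that it suffices that $q_{2n} = p_{2n}$; but $\sim_1$ does not force this. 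So instead: among all $Q = q_1\ldots q_{2n+1}\in\sigma$ such that $q_1\ldots q_{2n-1}\sim_1 p_1\ldots p_{2n-1}$, $q_{2n} = m$, i.e. we look at $\sigma$-plays whose history agrees with $P m$ on all first-player moves and whose last move is exactly $m$ — choose the one minimising $q_{2n+1}$ under $\leq$, and set the next move of $\tau$ to be that $q_{2n+1}$, putting $P m q_{2n+1}$ into $\tau$.

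The first key step is to check this recursion never gets stuck: whenever $P\in\tau$ and $Pm$ is legal, there is at least one such $Q\in\sigma$. This I would prove simultaneously with the invariant that for every $P\in\tau$ there exists $\hat P\in\sigma$ with $\hat P \sim_1 P$. Given the invariant for $P$ of length $2n$, take the witness $\hat P\in\sigma$; since $\sigma$ is a strategy and $\hat P$ ends with a first-player move (length $2n$ — hmm, $\sigma$'s plays end with first-player moves, so lengths in $\sigma$ are odd; I should index so that $P\in\tau$ has length $2n$ meaning it ends after the first player's $n$-th move, matching $\sigma$). Re-fixing conventions: plays in $\sigma$ and in $\tau$ have odd length $2n+1$, ending with the first player's move. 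Invariant: each $P\in\tau$ has some $\hat P\in\sigma$ with $\hat P\sim_1 P$ (same length). Step: given $P = p_1\ldots p_{2n+1}\in\tau$ with witness $\hat P\in\sigma$, and given a legal second-player move $m$ from $p_{2n+1}$, I need some $Q\in\sigma$, $q_1\ldots q_{2n+1}\sim_1 p_1\ldots p_{2n+1}$, $q_{2n+2} = m$. Now $\hat P m$ — is that legal in the game? Since $\hat P\sim_1 P$, the first player made the same moves, but the positions $\hat p_{2n+1}$ and $p_{2n+1}$ may differ, so $m$ need not be legal from $\hat p_{2n+1}$. This is the main obstacle, and it means the naive recursion above is not quite right.

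To fix it, I would strengthen the invariant to say that $P\in\tau$ implies $P\in\sigma$ \emph{or} more carefully that the position reached by $\tau$ after a given play equals the position reached by some $\sigma$-play with the same first-player moves — but that is exactly what can fail. The honest resolution, which I expect the author uses: define $\tau$'s plays to literally \emph{be} $\sigma$-plays, by maintaining a single $\hat P\in\sigma$ alongside each $P\in\tau$ with $\hat P\sim_1 P$ \emph{and} $\hat P$ ending at the same position as $P$; this forces $p_{2n+1} = \hat p_{2n+1}$ at every stage, so $\tau$-plays and their witnesses coincide, i.e. $\tau\subseteq\sigma$ as \emph{sets of plays} but with a restricted, order-minimal branching. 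Concretely: $P\in\tau$ will always be a genuine element of $\sigma$; when the second player plays a legal $m$ from $p_{2n+1}$, consider $\{\,Q\in\sigma : q_1\ldots q_{2n+1}\sim_1 p_1\ldots p_{2n+1},\ q_{2n+2}=m\,\}$. Since $P\in\sigma$ and $\sigma$ is a strategy and $Pm$ is legal, $P m$ extends (uniquely in $\sigma$) to $Pmm'\in\sigma$, so the set is nonempty and contains $Pmm'$ itself (here $q_1\ldots q_{2n+1} = p_1\ldots p_{2n+1}$, trivially $\sim_1$). Pick $Q$ in this set minimising $q_{2n+2+1}$; but we also need $q_1\ldots q_{2n+1} = p_1\ldots p_{2n+1}$ exactly, which $\sim_1$ does not give for the second-player moves $q_2,q_4,\ldots$. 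Hmm — but those earlier second-player moves are below the current one and were already fixed by $\tau$'s construction to match $P$'s. I think the clean statement is: we only ever compare/choose among $\sigma$-plays that are \emph{consistent with $\tau$'s history so far}, i.e. agree with $P$ in \emph{all} moves $q_1\ldots q_{2n+1}$, and $\sim_1$-freedom is used only to \emph{verify reducedness}, not in the construction. So $\tau$ is: start with any $\sigma$-play of length $1$ (or the minimal one); at each second-player move $m$, among the $\sigma$-continuations — and $\sigma$ determines a unique continuation $m'$! So actually the minimisation is vacuous and $\tau = \sigma$?? No: the point of $\sim_1$ and Borel/$\Phi$-closure is that we get freedom to \emph{swap which $\sigma$-play we follow} when two $\sigma$-plays have the same first-player moves so far but the second player is about to diverge in a way that only differs by position, not by "which branch". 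I would therefore define: two partial plays are $\tau$-equivalent if $\sim_1$ and same final position; $\tau$ follows, from each $\tau$-equivalence class of reached positions, the $\leq$-least first-player move available across \emph{all} $\sigma$-plays in that class; closure of $\Phi$ under $\sim_1$ guarantees the resulting merged strategy is still winning, and the single global order $\leq$ witnesses reducedness. The hard part is formalising this merging and checking it stays a well-defined strategy closed under $2$-truncation; I would do it by transfinite-free recursion on play length, carrying the invariant "$P\in\tau \Rightarrow \exists \hat P\in\sigma,\ \hat P\sim_1 P,\ \hat P,\,P$ end at the same position", and verifying the recursion step goes through precisely because we are allowed to switch $\hat P$ within its $\sim_1$-class to one that admits $m$.
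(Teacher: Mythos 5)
There is a genuine gap, and it starts with the ``main obstacle'' you identify, which is in fact illusory: the plays occurring in a strategy end with a move of the first player, i.e.\ at an odd index, so $\hat P\sim_1 P$ for two such plays of the same length already forces $\hat p_{2n+1}=p_{2n+1}$ --- the final positions cannot differ. Consequently the invariant you later add (``same final position'') is automatic, and your ensuing claim that it forces the $\tau$-plays to coincide with their witnesses, so that $\tau\subseteq\sigma$, is wrong: the second player's moves in the actual play may differ from those in the witness, and that is the whole point. The paper's construction is exactly the clean version of what you are groping towards: carry along an auxiliary play $s'\in\sigma$ with $s'\sim_1 s$ (hence ending at the same position as the actual play $s$), so that any legal reply $m$ of the second player to $s$ is also legal after $s'$, and update the auxiliary play together with the actual one; the actual plays need never lie in $\sigma$, and closure of $\Phi$ under $\sim_1$ then gives that the new strategy is winning.

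The more serious omission is that you never pin down the minimisation, and you defer precisely the part that carries the mathematical content. The paper minimises over all pairs $(m',u)$ with $s'm'u\in\sigma$ and $smu$ legal: the imagined second-player move $m'$ is allowed to differ from the actual move $m$; only the chosen reply $u$ must be legal from the actual position, and one minimises $u$ first. This wider choice set, together with the inductive fact that $\sim_1$-equivalent plays of the new strategy have the same auxiliary play, is exactly what makes the reducedness check work: given $P,Q$ in the new strategy with $p_1\ldots p_{2n-1}\sim_1 q_1\ldots q_{2n-1}$ and $p_1\ldots p_{2n}q_{2n+1}$ legal, the competing move $q_{2n+1}$ occurs as a candidate in $P$'s minimisation via the pair $(u_{2n},u_{2n+1})$ coming from $Q$'s auxiliary play, whence $p_{2n+1}\leq q_{2n+1}$. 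Under the narrower reading your sketch suggests (only witnesses continued by the same move $m$ are considered), $q_{2n+1}$ need not be a candidate at all, and reducedness does not follow. Since you explicitly leave unverified that the merged object is a strategy (unique responses, closure under $2$-truncation) and that it is reduced, the proposal as it stands is an outline with the decisive steps missing, and with two incorrect intermediate claims along the way.
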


\begin{proof}
First, we pick a well-order $\leq$ of the set of positions.
Next we define a reduced winning strategy $\bar \sigma$ for the first player.
 The first player should play as follows.
 His first move should be the same as in $\sigma$. 
Whenever he has just made a move he should have in mind an auxiliary play according to $\sigma$
which ends at the same position.
Assume that the first player and the second player have already played $2n+1$ moves and 
let $s$ be the current play, and $s'$ the auxiliary play.
Now assume that the second player's response is $m$.
As $\sigma$ is winning, there is a move $t$ of the first player such that $s'mt\in \sigma$.
Let $X$ be the set of those pairs $(m',u)$
such that $s'm'u\in \sigma$ and 
$smu$ is a legal play. $X$ is nonempty since $(m,t)\in X$.
The first player picks $(m',u)\in X$ such that $u$ is minimal with respect to $\leq$ and, subject to this, such that $m'$ is minimal with respect to $\leq$.
He plays $u$ and imagines the auxiliary play $(smu)'=s'm'u$.

It is clear that this defines a strategy for the first player.
Next, we show $\bar \sigma$ is winning.
So let $(s_n|n\in \Nbb)$ be a sequence of plays according to $\bar \sigma$ with $s_n$ of length $2n+1$, each extending the previous one. 
By construction, it is clear that $s_{n+1}'$ extends $s_n'$.
As $s_n'\in \sigma$, the union of the $s_n'$ is in 
$\Phi$. As $\Phi$ is closed under $\sim_1$, the union of the $s_n$ is in 
$\Phi$ as well. Thus $\bar \sigma$ is winning.

By induction, it is straightforward to check that
if $s,t\in \bar \sigma$ and $s\sim_1 t$, then $s'=t'$.

It remains to show that $\bar \sigma$ is reduced.
So let $P=p_1...p_{2n+1}$ and $Q=q_1...q_{2n+1}$ in $\bar\sigma$
with $p_1...p_{2n-1}\sim_1 q_1...q_{2n-1}$ such that $p_1...p_{2n}q_{2n+1}$ is a legal play.
Let $s = p_1 \ldots p_{2n-1}$. 
Let $Q'=u_1...u_{2n+1}$.
Then as noted above we have $s' = u_1 \ldots u_{2n-1}$, so $s'u_{2n}u_{2n+1} \in \sigma$.
So by the construction of $p_{2n+1}$ we have $p_{2n+1} \leq u_{2n+1} = q_{2n+1}$.
Thus $\bar \sigma$ is reduced, which completes the proof.
\end{proof}

The following is a direct consequence of the definition of a reduced strategy.

\begin{rem}\label{change_a-little}
Let $\bar\sigma$ be a reduced winning strategy for the first player.
Let $p_1...p_n\in \bar \sigma$ and $q_1...q_m\in \bar\sigma$ and assume there is some odd $i$ such that $p_1...p_i\sim_1 q_1...q_i$. Then $p_1...p_{i}q_{i+1}...q_m\in \bar \sigma$.
\qed
\end{rem}

\section{Presentations} \label{pres}
Fix a field $k$. For any set $E$ and any element $v$ of the vector space $k^E$, the {\em support} $\supp(v)$ is $\{e \in E | v(e) \neq 0\}$. To simplify our notation, we formally consider such a vector $v$ to be a function with domain the support of $v$. This means that we can also consider $v$ itself to be a member of other vector spaces $k^F$ with $\supp(v) \subseteq F$. Note that addition and scalar multiplication of vectors are unambiguous with respect to this convention. If $V$ is a subspace of $k^E$, we denote by $S(V)$ the set of supports of vectors in $V$.

For $v, w \in k^E$ we say that $v$ and $w$ are {\em orthogonal}, denoted $v \perp w$, if $\sum_{e \in E}v(e)w(e) = 0$. Here and throughout the paper such equalities are taken to include the claim that the sum on the left is well defined, in the sense that only finitely many summands are nonzero. That is, if $v \perp w$ then in particular $\supp(v) \cap \supp(w)$ is finite. If $V$ and $W$ are subspaces of $k^E$ then we say they are {\em orthogonal}, denoted $V \perp W$, if $(\forall v \in V)(\forall w \in W) v \perp w$.

As in \cite{BC:determinacy}, we will need some extra linear structure over $k$ to allow us to stick together matroids along sets of dummy edges of size more than 1. In fact, we will stick together presentations over $k$ of the matroids in question to obtain a presentation of the resulting matroid. Therefore we must specify precisely what objects we will be taking as presentations of infinite matroids over $k$.

\begin{dfn}
Let $E$ be any set.
A {\em presentation} $\Pi$ on $E$ consists of a pair $(V, W)$ of orthogonal subspaces of $k^E$ such that $S(V)$ and $S(W)$ satisfy (O2). Elements of $V$ are called {\em vectors} of $\Pi$ and elements of $W$ are called {\em covectors}. We will sometimes denote the first element of $\Pi$ by $V_{\Pi}$ and the second by $W_{\Pi}$. We say that $\Pi$ {\em presents} the matroid $M$ if the circuits of $M$ are the minimal nonempty elements of $S(V_{\Pi})$ and the cocircuits of $M$ are the minimal nonempty elements of $S(W_{\Pi})$.
\end{dfn}

It is clear from the results of \cite{THINSUMS} that a tame matroid $M$ is representable in the sense of that paper over $k$ if and only if there is a presentation over $k$ which presents $M$.

Note that for any presentation $\Pi$, the pair $(S(V_{\Pi}), S(W_{\Pi}))$ is an orthogonality system. Accordingly, we say a set is {\em $\Pi$-independent} when it is independent with respect to this orthogonality system.

\begin{rem}\label{countE}
If $E$ is a countable set then any presentation on $E$ presents a matroid by \autoref{ortho_axioms+}.
\end{rem}

\begin{dfn}
If $V$ is a subspace of $k^E$ then for $X$ a subset of $E$ we define the {\em restriction} $V \restric_X$ of $V$ to $X$ to be $\{v \in V | \supp(v) \subseteq X\}$. We denote the restriction of $V$ to $E \setminus Q$ by $V \backslash Q$, and say it is obtained from $V$ by {\em removing} $Q$. Similarly, for $X$ a subset of $E$ we define the {\em contraction} $V.X$ of $V$ to $X$ to be $\{v \restric_X | v \in V\}$. We denote the contraction of $V$ to $E \setminus P$ by $V / P$, and say it is obtained from $V$ by {\em contracting} $P$. We also define these terms for presentations as follows:
\begin{eqnarray*}
(V, W) \restric_X &=& (V \restric_X, W.X) \\
(V, W) \backslash Q &=& (V \backslash Q, W / Q) \\
(V, W).X &=& (V.X, W \restric_X) \\
(V, W) / P &=& (V / P, W \backslash P)\\
\end{eqnarray*} 
All of these operations give rise to new presentations, called {\em minors} of the original presentation.
\end{dfn}

We will need some basic lemmas about presentations.

\begin{lem}\label{finpres}
Let $E$ be a finite set. Then a pair $(V, W)$ of subspaces of $E$ is a presentation on $E$ if and only if $V$ and $W$ are orthogonal complements.
\end{lem}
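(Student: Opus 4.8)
The statement \autoref{finpres} says: for a finite set $E$, a pair $(V,W)$ of subspaces of $k^E$ is a presentation on $E$ (i.e. $V\perp W$ and $S(V),S(W)$ satisfy (O2)) if and only if $W=V^\perp$ (equivalently, $V$ and $W$ are orthogonal complements in $k^E$). Since $E$ is finite there are no finiteness subtleties: orthogonality is the usual bilinear-form orthogonality and $\dim V + \dim V^\perp = |E|$.

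The plan is to prove both directions. For the easy direction, assume $W=V^\perp$. Then certainly $V\perp W$, so it remains to check (O2) for $(S(V),S(W))$. Fix a partition $E=P\,\dot\cup\,Q\,\dot\cup\,\{e\}$. Consider the linear map $\pi\colon V\to k$ sending $v\mapsto v(e)$, restricted to $V\restric_{P+e}=\{v\in V:\supp(v)\subseteq P+e\}$. If this map is nonzero, there is $v\in V$ with $\supp(v)\subseteq P+e$ and $v(e)\neq 0$; then $\supp(v)\in S(V)$ is a set through $e$ inside $P+e$, giving the first alternative of (O2). If instead $v(e)=0$ for every $v\in V$ with $\supp(v)\subseteq P+e$, I want to produce $w\in W=V^\perp$ with $\supp(w)\subseteq Q+e$ and $w(e)\neq 0$. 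The key linear-algebra fact: the vector $\chi_e$ (the coordinate functional, or rather the indicator vector of $\{e\}$) lies in $V^\perp + k^Q$; equivalently, the restriction-to-$P$ map $k^E \to k^P$ does not kill $V^\perp$... Let me phrase it cleanly: the condition "$v(e)=0$ for all $v\in V\restric_{P+e}$" says that $\chi_e\restric_{P+e}$ annihilates $V\restric_{P+e}$ under the standard pairing on $k^{P+e}$, hence $\chi_e\restric_{P+e}\in (V\restric_{P+e})^{\perp}$ computed inside $k^{P+e}$. Now $(V\restric_{P+e})^\perp$ inside $k^{P+e}$ equals the image of $V^\perp=W$ under restriction $k^E\to k^{P+e}$ (this is a standard finite-dimensional duality identity: for any subspace $U\le k^E$ and coordinate subset $A$, $(U\restric_A)^\perp = (U^\perp).A$ inside $k^A$). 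So there is $w\in W$ with $w\restric_{P+e} = \chi_e\restric_{P+e}$, i.e. $w(e)=1$ and $w(p)=0$ for all $p\in P$; thus $\supp(w)\subseteq Q+e$ and $e\in\supp(w)\in S(W)$, giving the second alternative. This proves (O2), so $(V,W)$ is a presentation.

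For the converse, assume $(V,W)$ is a presentation; since $V\perp W$ we have $W\subseteq V^\perp$ automatically, and I must show $V^\perp\subseteq W$, equivalently $\dim W \ge \dim V^\perp = |E|-\dim V$, i.e. $\dim V + \dim W \ge |E|$. I would prove $\dim V + \dim W = |E|$ (the reverse inequality $\le$ being immediate from $W\subseteq V^\perp$) by exhibiting, for the orthogonality system $(S(V),S(W))$, a base $B$ and noting $|B|=\dim V$ while $|E\sm B| = \dim W$. Concretely: by \autoref{O3_psi} (tameness is automatic since $E$ is finite) and the fact that finite orthogonality systems satisfying (O1)--(O3) are matroids — or more directly by building $B$ greedily using (O2) — I get a maximal $\Pi$-independent set $B\subseteq E$. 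For each $x\notin B$, (O2) applied to the partition with $P = B$, $\{e\}=\{x\}$, $Q = E\sm(B+x)$ together with maximality of $B$ forces an element of $S(W)$ through $x$ inside $(E\sm B)+x$ — wait, I need to be careful about which side; I would instead use \autoref{base_char}: $B$ is a base iff for each $x\notin B$ there is $o\in S(V)$ with $x\in o\subseteq B+x$, and for each $x\in B$ there is $d\in S(W)$ with $x\in d\subseteq (E\sm B)+x$. The first condition says $\{v\restric_{B+x}:v\in V\}$ contains a vector nonzero at $x$ for each $x\notin B$, which together with $B$ being $\Pi$-independent (so $V\restric_B = 0$) shows $V.B$... hmm. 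Cleaner: $B$ $\Pi$-independent means no nonempty $\supp(v)\subseteq B$, so $\dim(V\restric_B)=0$; and the base property gives, for each $x\notin B$, a $v_x\in V$ with $\supp(v_x)\subseteq B+x$ and $v_x(x)\ne 0$. These $v_x$ for $x\in E\sm B$ are linearly independent (distinct "leading" coordinates $x$) and one checks $V$ is spanned by them since any $v\in V$ can be reduced modulo them to something supported in $B$, hence zero. So $\dim V = |E\sm B|$. Symmetrically (using the dual half of \autoref{base_char} and that $(S(W),S(V))$ is also a presentation), $\dim W = |B|$. Hence $\dim V+\dim W = |E|$, giving $W = V^\perp$.

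\textbf{Main obstacle.} The only real content is the finite-dimensional duality identity $(U\restric_A)^\perp = U^\perp.A$ (restriction/projection dualises orthogonal complement), used in the easy direction, and the verification that the base $B$ produced by the orthogonality axioms has complementary cardinality in $V$ and $W$ — i.e. translating the combinatorial base-characterisation \autoref{base_char} into the linear-algebraic statement $\dim V = |E\sm B|$ and $\dim W=|B|$ via triangular-system arguments. Both are routine over a field with $E$ finite, so I expect no genuine difficulty; the care needed is just in matching up which of $V,W$ plays the role of circuits versus cocircuits and keeping the restriction/contraction bookkeeping straight.
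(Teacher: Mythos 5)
Your proposal is correct, and while your forward direction is essentially the paper's (the paper phrases the same duality fact as $\mathbb{1}_e \notin V + k^P$ implies $V^{\perp}\cap (k^P)^{\perp}\not\subseteq\langle\mathbb{1}_e\rangle^{\perp}$, which is exactly your identity $(U\restric_A)^{\perp}=U^{\perp}.A$), your converse is genuinely different. The paper argues by minimal support: it takes $w\in V^{\perp}\setminus W$ with $\supp(w)$ minimal, applies (O2) to the partition $P=E\setminus\supp(w)$, $Q=\supp(w)-e$, and either gets a vector meeting $\supp(w)$ in a single element (contradicting $w\in V^{\perp}$) or a covector $w'$ through $e$ inside $\supp(w)$, with which it eliminates $e$ and contradicts minimality — a short, self-contained exchange argument needing nothing beyond (O2) and orthogonality. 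You instead extract a maximal $\Pi$-independent set $B$ (trivial to find since $E$ is finite), use \autoref{base_char} to produce the vectors $v_x$ ($x\notin B$) and covectors $w_x$ ($x\in B$) with triangular supports, and conclude $\dim V = |E\setminus B|$, $\dim W = |B|$, hence $\dim V+\dim W=|E|$ and $W=V^{\perp}$ from $W\subseteq V^{\perp}$. This buys a stronger conclusion (the rank formula and the identification of bases of the orthogonality system with linear bases) at the cost of invoking the base machinery; the one point you gloss is the spanning step for $W$, where you need $E\setminus B$ to be $S(W)$-independent so that a covector reduced modulo the $w_x$ to support inside $E\setminus B$ must vanish — this does hold, e.g.\ because a nonempty covector supported in $E\setminus B$ would meet some $v_x$ in exactly one element, violating orthogonality (O1), so it is a sentence to add rather than a gap. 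Both routes are sound; the paper's is shorter and more local, yours is more structural.
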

\begin{proof}
For any subspace $U$ of $k^E$, we will denote the orthogonal complement of $U$ by $U^{\perp}$.
Suppose first of all that $W = V^{\perp}$. We must show that $S(V)$ and $S(W)$ satisfy (O2), so suppose we have a partition $E = P \dot \cup Q \dot \cup \{e\}$. If there is no $v \in V$ with $e \in supp(v) \subseteq P + e$ then $\mathbb{1}_e \not \in V + k^P$, so $V^{\perp} \cap (k^P)^{\perp} \not \subseteq \langle \mathbb{1}_e \rangle^{\perp}$. That is, there is some $w$ which is in $V^{\perp} = W$ and is in $(k^P)^{\perp}$, so that $\supp(w) \subseteq Q + e$, but with $w(e) \neq 0$. Thus $e \in \supp(w) \subseteq Q + e$, as required.

Now suppose that $(V, W)$ is a presentation, so that $S(V)$ and $S(W)$ satisfy (O2). Suppose for a contradiction that $W \neq V^{\perp}$, so that there is some $w \in V^{\perp} \setminus W$. As $E$ is finite, we can choose such a $w$ with minimal support. Since $w \neq 0$, we can pick some $e \in \supp(w)$. Let $P = E \setminus \supp(w)$ and $Q = \supp(w) - e$. Applying (O2) to the partition $E = P \dot \cup Q \dot \cup \{e\}$, we either get some $v \in V$ with $e \in \supp(v) \subseteq P + e$, so that $\supp(v) \cap \supp(w) = \{e\}$, contradicting our assumption that $w \in V^{\perp}$, or else we get some $w' \in W$ with $e \in \supp(w') \subseteq Q + e$. But in that case, letting $w'' = w - \frac{w(e)}{w'(e)}w'$ we have that $w'' \in V^{\perp}$ and $\supp(w'') \subsetneq \supp(w)$, so by minimality of the support of $w$ we have $w'' \in W$. Thus $w = w'' +  \frac{w(e)}{w'(e)}w' \in W$, which is again a contradiction.
\end{proof}

\begin{dfn}
Let $\Pi = (V, W)$ be a presentation on a set $E$ and $x \in k^E$. Then $\Pi_x = (V_x, W^x)$ is the pair of orthogonal subspaces of $k^{E + *}$ given by $V_x = V + \langle x - \mathbb{1}_{*}\rangle$ and $W_x = \{w \in k^{E + *}| w \restric_E \in W \text{ and } w(*) = \sum_{e \in E} w(e)x(e)\}$.
\end{dfn}

\begin{rem}\label{xmincom}
If $P$ and $Q$ are disjoint subsets of $E$ not meeting $\supp(x)$ then $$\Pi_x /P \backslash Q = (\Pi / P \backslash Q)_x\,.$$
\end{rem}

If $E$ is finite then it is clear (using the equivalent characterisation of presentations in \autoref{finpres}) that $\Pi_x$ is again a presentation. In fact this is more generally true:

\begin{lem}
Let $\Pi = (V, W)$ be a presentation on a set $E$ and let $x \in k^E$ have finite support. Then $\Pi_x$ is a presentation on $E + *$.
\end{lem}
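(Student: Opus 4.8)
The plan is to verify the two conditions in the definition of a presentation: that $(V_x, W^x)$ is a pair of orthogonal subspaces of $k^{E+*}$, and that $S(V_x)$ and $S(W^x)$ satisfy (O2). The first is routine and uses only that $\supp(x)$ is finite: $V_x$ is visibly a subspace, and $W^x$ is cut out by the linear condition $w(*) = \sum_{e \in \supp(x)} w(e)x(e)$, which makes sense and is linear precisely because this is a finite sum. For orthogonality it suffices to check that $v \perp w$ and $(x - \mathbb{1}_*) \perp w$ for all $v \in V$ and $w \in W^x$; the first follows from $V \perp W$ since $v(*) = 0$, and the second is the identity $\sum_{e \in E} x(e) w(e) - w(*) = 0$, which holds by definition of $W^x$ and is well defined since $\supp(x - \mathbb{1}_*) = \supp(x) \cup \{*\}$ is finite.

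The substance is (O2). Fix a partition $E + * = P \dot\cup Q \dot\cup \{f\}$. The first step is to unwind the two disjuncts: an element of $S(V_x)$ through $f$ contained in $P + f$ is, after scaling, the support of some $v + \lambda(x - \mathbb{1}_*)$ with $v \in V$, where $\lambda$ is arbitrary when $* \in P$ but forced to be $0$ when $* \notin P$; and an element of $S(W^x)$ through $f$ contained in $Q + f$ is the support of some $w \in W$ with $\supp(w) \se Q + f$, carrying the extra requirement $\langle w, x\rangle = 0$ exactly when $* \notin Q$. With these reformulations I would split into three cases according to where $f$ and $*$ lie.

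If $f \neq *$ and $* \in Q$, the two disjuncts are literally those of (O2) for $\Pi$ at the partition $E = P \dot\cup (Q \sm *) \dot\cup \{f\}$, so this case is immediate. The other two cases both reduce to the dichotomy $(\star)$: \emph{for every bipartition $E = P_0 \dot\cup Q_0$, either $x \in V + k^{P_0}$, or some $u \in W$ with $\supp(u) \se Q_0$ satisfies $\langle u, x\rangle \neq 0$.} When $f = *$, the two disjuncts of (O2) unwind exactly to the two alternatives of $(\star)$ for $E = P \dot\cup Q$. When $f \neq *$ and $* \in P$, apply (O2) for $\Pi$ at $E = (P \sm *) \dot\cup Q \dot\cup \{f\}$: if it returns a vector, it gives the first disjunct directly; if it returns a covector $u_0$ with $f \in \supp(u_0) \se Q + f$, we are done when $\langle u_0, x\rangle = 0$, and otherwise we either find $u'' \in W$ with $\supp(u'') \se Q$ and $\langle u'', x\rangle \neq 0$ --- in which case $u_0 - \tfrac{\langle u_0,x\rangle}{\langle u'',x\rangle}u''$ is the covector we need, as subtracting something supported in $Q$ leaves the value at $f$ unchanged --- or else no such $u''$ exists, and then $(\star)$ for $E = ((P \sm *) \cup \{f\}) \dot\cup Q$ yields $x = v_1 + q$ with $v_1 \in V$ and $\supp(q) \se (P \sm *) + f$; now $\supp(u_0) \cap \supp(q) \se \{f\}$, so $\langle u_0, x\rangle = \langle u_0, q\rangle = u_0(f)q(f) \neq 0$ forces $q(f) \neq 0$, and $q - \mathbb{1}_* \in V_x$ witnesses the first disjunct.

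Thus everything comes down to $(\star)$, and proving it is the step I expect to be the main obstacle (though it is elementary bookkeeping). I would argue by induction on $|\supp(x) \cap Q_0|$. If this is empty then $x \in k^{P_0}$ and we are done. Otherwise pick $e_1 \in \supp(x) \cap Q_0$ and apply (O2) for $\Pi$ to the partition $E = \bigl(P_0 \cup (\supp(x) \cap Q_0) \sm e_1\bigr) \dot\cup \bigl(Q_0 \sm \supp(x)\bigr) \dot\cup \{e_1\}$. If it returns a covector $u$ with $e_1 \in \supp(u)$, then $\supp(u) \cap \supp(x) = \{e_1\}$, so $\supp(u) \se Q_0$ and $\langle u, x\rangle = u(e_1)x(e_1) \neq 0$, giving the second alternative. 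If it returns a vector $v$ with $e_1 \in \supp(v) \se P_0 \cup (\supp(x) \cap Q_0)$, rescale so $v(e_1) = x(e_1)$ and set $x' := x - v$; then $\supp(x') \se \supp(x) \cup P_0$ and $e_1 \notin \supp(x')$, so $|\supp(x') \cap Q_0|$ has strictly decreased, and the induction hypothesis applied to $x'$ gives either $x' \in V + k^{P_0}$, whence $x = x' + v \in V + k^{P_0}$, or $u \in W$ supported in $Q_0$ with $\langle u, x'\rangle \neq 0$, whence $\langle u, x\rangle = \langle u, x'\rangle + \langle u, v\rangle = \langle u, x'\rangle \neq 0$ using $u \perp v$. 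This closes the induction and completes the proof.
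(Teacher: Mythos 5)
Your proof is correct, but it follows a genuinely different route from the paper's. The paper reduces everything to the finite case: given a partition $E + * = P \dot\cup Q \dot\cup \{e\}$, it takes the finite set $F = \supp(x) + e + *$, forms the minor $\Pi' = (\Pi/(P\sm F)\backslash(Q\sm F))_x$ on $F$, uses the commutation of $(\cdot)_x$ with minors (\autoref{xmincom}) together with the facts that minors of presentations are presentations and that over a finite ground set $\Pi_x$ is a presentation because presentations there are exactly pairs of orthogonal complements (\autoref{finpres}), then applies (O2) in the finite presentation $\Pi'$ and lifts the witness back to $\Pi_x$. You instead verify (O2) for $\Pi_x$ head-on, splitting into cases according to where $*$ and the distinguished element sit, and reducing everything to your dichotomy $(\star)$, which you prove by induction on $|\supp(x)\cap Q_0|$ using only (O2) for $\Pi$ and orthogonality; your case analysis and the lifting steps (rescaling $u_0$ by $u''$, and extracting $q - \mathbb{1}_*$ from $x = v_1 + q$) all check out. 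What your approach buys is self-containedness: it avoids the minor machinery entirely, in particular the fact (asserted without proof in the paper's definition of minors) that minors of presentations are again presentations, as well as \autoref{finpres}; the cost is length, since the paper's argument is a three-line reduction once that machinery is available. One small point to tidy: in your induction the modified vector $x' = x - v$ need not have finite support (you only guarantee $\supp(x')\cap Q_0 \subsetneq \supp(x)\cap Q_0$), so $(\star)$ should be stated for arbitrary $y \in k^E$ with $\supp(y)\cap Q_0$ finite rather than for finitely supported $y$; this costs nothing, because for $u\in W$ with $\supp(u)\se Q_0$ the sum $\sum_{e\in E}u(e)y(e)$ is then still a finite sum, and your inductive step goes through verbatim.
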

\begin{proof}
It is clear that $V_x \perp W^x$, so we just have to prove (O2). Suppose we have some partition $E + * = P \dot \cup Q \dot \cup \{e\}$. Let $F$ be the finite set $\supp(x) + e + *$. Now consider the presentation $\Pi' = (\Pi / (P \setminus F)  \backslash (Q  \setminus F))_x$ on the finite set $F$. By \autoref{xmincom}, we have $\Pi' = \Pi_x / (P \setminus F) \backslash (Q \setminus F)$. We now apply (O2) in $\Pi'$ to the partition $F = (P \cap F) \dot \cup (Q \cap F) \dot \cup \{e\}$. If we find a vector $v$ of $\Pi'$ with $e \in \supp(v) \subseteq (P \cap F) + e$ then we can extend $v$ to a vector $v'$ of $\Pi_x$ which witnesses (O2) in that $e \in \supp(v) \subseteq P + e$. The case that there is a covector $w$ of $\Pi$ with $e \in \supp(w) \subseteq (Q \cap F) + e$ is dealt with similarly.
\end{proof}

By \autoref{O3_psi}, for any presentation $(V, W)$ we must have that $S(V)$ satisfies (O3). We are now in a position to prove a more general (O3)-like principle.

\begin{lem}\label{O3pres}
Let $\Pi$ be a presentation on a set $E$, $v_0$ a vector of $\Pi$, $X$ a subset of $E$ and $F$ a finite subset of $E$ disjoint from $X$. Then amongst the set $L_{v_0}^{F, X}(\Pi)$ of vectors $v$ of $\Pi$ such that $v \restric_F = v_0 \restric_F$ and $\supp(v) \subseteq \supp(v_0) \cup X$, there is one with $\supp(v) \setminus X$ minimal.
\end{lem}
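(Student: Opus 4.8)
The plan is to reduce the statement about the possibly infinite ground set $E$ to the finite case, exactly in the style of the proof that $\Pi_x$ is a presentation. The key observation is that the conclusion only concerns vectors agreeing with $v_0$ on the finite set $F$ and supported within $\supp(v_0) \cup X$; since $v_0$ has finite support, $\supp(v_0) \cup F$ is finite, so the only ``interesting'' coordinates outside $X$ already lie in a finite set, and the part of the support lying in $X$ is irrelevant to the minimality condition except insofar as it is forced by orthogonality.

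First I would fix a partition of $E$ that isolates a finite set. Let $A = \supp(v_0) \cup F$, a finite set disjoint from $X \setminus A$ after we discard the overlap; more precisely set $X' = X \setminus A$ and consider the presentation $\Pi' = \Pi / (X \cap (\text{stuff}))$... actually the cleanest choice is $\Pi'' = (\Pi \backslash (E \setminus (A \cup X)))$, i.e. restrict to $A \cup X$, which changes nothing since no candidate vector has support outside $A \cup X$; and then contract $X' = X \setminus A$ to land on the finite set $A$. Write $\Pi' = \Pi'' / X'$, a presentation on the finite set $A$ by the minor constructions of \autoref{pres} together with \autoref{finpres}. The vector $v_0 \restric_A$ is a vector of $\Pi'$. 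Now $S(V_{\Pi'})$ satisfies (O3) by \autoref{O3_psi} (any presentation's support set does), so applying (O3) to $v_0 \restric_A \in S(V_{\Pi'})$, the element $e$ of $F$... wait, (O3) is stated for a single element $e \in C$; I would instead apply the stronger finite-matroid fact directly, or iterate: the point is that in the finite matroid presented by $\Pi'$, among vectors agreeing with $v_0 \restric_A$ on $F$, there is one whose support outside... hmm.

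Here is the cleaner route. In the finite presentation $\Pi'$ on $A$, consider the set of vectors $v'$ with $v' \restric_F = v_0 \restric_F$; this is a coset of the subspace $V_{\Pi'} \restric_{A \setminus F} = \{v' \in V_{\Pi'} \mid \supp(v') \subseteq A \setminus F\}$, so it is finite (as $A$ is finite and $k$ may be infinite, we instead argue: the relevant quantity $\supp(v') \setminus X$ here is $\supp(v') \cap (A \setminus X') = \supp(v') \cap A$ minus... but $A$ is already outside $X'$). Since we only care about minimising $\supp(v') \setminus X$, and in $\Pi'$ the ground set $A$ is disjoint from $X'$, this is just minimising the support of $v'$ among the coset — and there are only finitely many possible supports (subsets of the finite set $A$), so a minimal one exists, say achieved by $v'_{\min}$. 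Then I would lift $v'_{\min}$ to a vector $v$ of $\Pi$: by definition of contraction, $v'_{\min} = v \restric_A$ for some $v \in V_{\Pi''} \subseteq V_{\Pi}$, and we may choose $v$ with $\supp(v) \subseteq A \cup X' = A \cup X$, hence $\supp(v) \subseteq \supp(v_0) \cup X$; and $v \restric_F = v'_{\min} \restric_F = v_0 \restric_F$. Finally $\supp(v) \setminus X \subseteq A \setminus X' = A$, and in fact $\supp(v) \setminus X = \supp(v'_{\min}) \setminus X'$ (coordinates of $v$ in $A$ are exactly those of $v'_{\min}$), so minimality of $\supp(v'_{\min})$ among the coset transfers to minimality of $\supp(v) \setminus X$ among $L_{v_0}^{F,X}(\Pi)$: any competitor $w \in L_{v_0}^{F,X}(\Pi)$ restricts to a member of the coset in $\Pi'$ whose support is $\supp(w) \cap A \supseteq \supp(w) \setminus X$, wait — I need $\supp(w) \setminus X \supseteq \supp(w'_{\min}) \setminus X'$ forced... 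Let me reorganise: $w \restric_A$ lies in the coset, so $\supp(w \restric_A) \supseteq \supp(v'_{\min})$ is false in general; rather minimality gives $\supp(v'_{\min})$ is minimal, so it is $\subseteq$ every other support only if it is the least — but (O3)-type minimality is just "minimal", not "least". So the correct statement is: $\supp(v'_{\min}) \setminus X'$ is a minimal element of $\{\supp(w\restric_A) \setminus X' : w \in L^{F,X}_{v_0}(\Pi)\}$ because that set of sets equals the set of supports (restricted to $A$, which is disjoint from $X'$) of members of the finite coset, and a minimal element exists since the whole collection is a finite family of finite sets. Then since $\supp(w) \setminus X = (\supp(w) \cap A) \setminus X = \supp(w \restric_A) \setminus X'$ for every such $w$ (as $\supp(w) \subseteq A \cup X$ forces $\supp(w) \setminus X \subseteq A$), this $v$ works.

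The main obstacle is bookkeeping the three set operations ($A \cup X$ versus $X$ versus $X' = X \setminus A$ versus $A \setminus X'$) correctly so that "minimal support in the finite minor" really does translate to "$\supp(v) \setminus X$ minimal in $\Pi$"; everything else is routine, using only \autoref{finpres}, the minor constructions from \autoref{pres}, and \autoref{O3_psi} (or merely the trivial fact that a finite family of finite sets has a minimal element — which is all we actually need, so \autoref{O3_psi} is not even required here). One subtlety worth stating explicitly: I should check that contracting $X'$ in $\Pi''$ genuinely produces all the candidate restrictions, i.e. that for $v \in L^{F,X}_{v_0}(\Pi)$ we indeed have $v \in V_{\Pi''}$ (immediate, since $\supp(v) \subseteq \supp(v_0) \cup X \subseteq A \cup X$) and hence $v \restric_A \in V_{\Pi'}$ — and conversely that any $v' \in V_{\Pi'}$ lifts to some $v \in V_{\Pi}$ with $\supp(v) \subseteq A \cup X$, which is exactly the definition of contraction of subspaces. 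With that in hand the argument closes.
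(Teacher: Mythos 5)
There is a genuine gap, and it is at the very first step: you assume that $v_0$ has finite support, so that $A = \supp(v_0) \cup F$ is finite. In this paper a vector of a presentation is an arbitrary element of a subspace of $k^E$ and may well have infinite support -- this is the whole point of the thin-sums setting (see \autoref{cex}, which is built from a vector and a covector of infinite support, and the footnote to \autoref{stellareg}, which singles out the finitary presentations as a special case). Once $\supp(v_0)$ may be infinite, your reduction to a presentation on a finite ground set collapses, and with it the concluding step ``a finite family of finite sets has a minimal element'': the sets $\supp(v)\sm X$ being compared can be infinite, and there can be infinitely many candidates, so the existence of a minimal element is exactly the nontrivial content of (O3) for tame orthogonality systems (\autoref{O3_psi}) and cannot be obtained by counting. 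Your parenthetical remark that \autoref{O3_psi} ``is not even required here'' is precisely where the argument breaks; what your proof establishes is only the finitary special case.

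For the general case one really does have to invoke (O3), and the difficulty you brushed against -- that (O3) controls only a single element $e$ of a support, whereas the lemma fixes the values of $v$ on the whole finite set $F$ -- is the actual point of the proof. The paper resolves it by passing to the auxiliary presentation $\Pi_{v_0\restric_F}$ on $E + *$: adjoining the single coordinate $*$ encodes the constraint $v\restric_F = v_0\restric_F$ as the condition $* \in \supp$, the map $v \mapsto v - v_0\restric_F + \mathbb{1}_*$ is an order-preserving bijection onto the corresponding family in $\Pi_{v_0\restric_F}$, and (O3) applied there (legitimate, since $\Pi_x$ was shown to be a presentation for $x$ of finite support, and $v_0\restric_F$ does have finite support because $F$ is finite) produces the required minimal element. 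If you try to repair your route by applying (O3) inside your minor instead of a finiteness count, you will find you still need some device of this kind to capture all of $F$ at once -- at which point you have reconstructed the paper's proof.
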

\begin{proof}
We put a preordering on $L_{v_0}^{F,X}(\Pi)$ by $v \leq v'$ if $\supp(v) \setminus X \subseteq \supp(v') \setminus X$.
The function $v \mapsto v - v_0\restric_F + \mathbb1_*$ is an order-preserving bijection from $L_{v_0}^{F,X}(\Pi)$ to $L_{v_0 - v_0\restric_F + \mathbb1_*}^{\{*\}, X}(\Pi_{v_0 \restric_F})$. The latter collection has a minimal element by (O3) applied to the set of supports of vectors of the presentation $\Pi_{v_o \restric_F}$. Hence the former collection also has a minimal element.
\end{proof}

\begin{rem}\label{minisind}
Let $v\in L_{v_0}^{F, X}(\Pi)$ such that $\supp(v) \setminus X$ is minimal.
Then the set $\supp(v) \setminus X$ is $\Pi/X$-independent.
\end{rem}

\begin{cor}\label{fromO3}
Let $\Pi$ be a presentation on a set $E$, $F$ a finite subset of $E$ and $P$ a subset of $E$ disjoint from $F$. Then there is a $\Pi$-independent subset $P'$ of $P$ such that $(\Pi / P)\restric_F = (\Pi / P') \restric_F$.
\end{cor}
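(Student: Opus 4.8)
The plan is to reduce the statement to the (O3)-like principle \autoref{O3pres} applied to a cleverly chosen vector. First I would observe what $(\Pi/P)\restric_F$ actually is: a vector $u \in k^F$ lies in $V_{\Pi/P}\restric_F$ exactly when there is a vector $v$ of $\Pi$ with $v\restric_F = u$ and $\supp(v)\subseteq F\cup P$ (the support outside $F$ being absorbed by the contraction of $P$). So the equality $(\Pi/P)\restric_F = (\Pi/P')\restric_F$ says precisely: every vector $v$ of $\Pi$ with $\supp(v)\subseteq F\cup P$ can be matched, on $F$, by a vector $v'$ of $\Pi$ with $\supp(v')\subseteq F\cup P'$ — and conversely, but the converse is automatic once $P'\subseteq P$. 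Since $\supp(v)\cap (F\cup P)$ need not be finite a priori, one subtlety is that $F$ is finite but $P$ is arbitrary; however, for the forward inclusion it is enough to produce, for \emph{each} individual vector $v$ supported in $F\cup P$, a witness supported in $F\cup P'$, and this is where finiteness of $F$ together with a minimality argument will do the work.

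The key step is to choose $P'$ to be $\supp(v)\setminus(F\cup\text{nothing})$ for a single vector obtained from \autoref{O3pres}, but since we need one $P'$ working for all $v$ simultaneously, I would instead take $P'$ to be the support (inside $P$) of a base of $P$ in the presentation $\Pi/F$ — or more carefully, proceed as follows. Apply \autoref{fromO3}'s companion \autoref{O3pres} / \autoref{minisind}: among all vectors $v$ of $\Pi$ with prescribed behaviour, minimality of $\supp(v)\setminus X$ yields $\Pi/X$-independence of $\supp(v)\setminus X$. Here I want $X = F$. The cleanest route: let $P'$ be a $\Pi/F$-independent subset of $P$ that is maximal with the property of being "spanning for $P$ relative to $F$", which exists because — after contracting the finite set $F$ — we can invoke (O3) on the presentation $\Pi_{\text{something}}$ over a finite index set, exactly as in the proof of \autoref{O3pres}. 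Concretely: for each $e\in P$, either $\{e\}$ is already dependent over $\Pi/(F\cup(P'\text{ built so far}))$, in which case $e$ is redundant, or we add it; \autoref{O3pres} guarantees this process stabilises in the sense that the resulting $P'$ captures everything.

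The honest way to phrase the main argument, and the step I expect to be the main obstacle, is handling the "simultaneously for all $v$" quantifier with only the single-vector conclusion of \autoref{O3pres}. I would resolve it like this: take any vector $v$ of $\Pi$ with $\supp(v)\subseteq F\cup P$; apply \autoref{O3pres} with $v_0 = v$, with the finite set there equal to $F$, and with $X = P'$ once $P'$ is fixed, to get a vector $v'\in L_{v}^{F,P'}(\Pi)$ with $\supp(v')\setminus P'$ minimal — by \autoref{minisind} this support is $\Pi/P'$-independent, and one checks it must then be contained in $F$, giving $\supp(v')\subseteq F\cup P'$ and $v'\restric_F = v\restric_F$ as desired. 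The circularity-free construction of $P'$ is: start from $P'_0 = \emptyset$; but this does not obviously terminate for infinite $P$. So instead I would define $P'$ directly as the set of $e\in P$ lying in the support of some $v'$ arising as the minimal witness for some $v$ — and then argue, again via \autoref{O3pres} applied inside the finite set $F\cup(\text{relevant finite support})$ exactly as in the proof of \autoref{O3pres} (passing to $\Pi_{v_0\restric_F}$ over a finite ground set), that a \emph{single} choice of $P'$ works: namely one obtained from a base of $\Pi/F$ restricted to $P$. The finite-ground-set reduction inside \autoref{O3pres}'s proof is the template; the extra work here is only the bookkeeping to see that independence over $\Pi/P'$ forces the leftover support into the finite set $F$, and that this is exactly the statement $(\Pi/P)\restric_F = (\Pi/P')\restric_F$.
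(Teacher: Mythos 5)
There is a genuine gap, and it is exactly at the point you flag as the main obstacle: the construction of a single $P'$ that works for all vectors at once. Your fallback choice, ``$P'$ obtained from a base of $\Pi/F$ restricted to $P$'', is not available: a base here means a maximal independent set, and for an arbitrary presentation there is no known guarantee that such maximal independent sets exist --- this is precisely the content of axiom (IM), which general presentations are not known to satisfy (establishing (IM) for the glued presentations is the whole point of \autoref{IM} of the paper), and Zorn's lemma does not apply because a union of a chain of independent sets can pick up a dependency of infinite support in the limit. Moreover, even granting such a $P'$, your step ``independence over $\Pi/P'$ forces the leftover support into $F$'' does not follow: if $e\in\supp(v')\cap(P\setminus P')$, maximality of $P'$ only yields a vector $z$ with $e\in\supp(z)\se F\cup P'+e$, and since $z$ may be nonzero on $F$, subtracting a multiple of $z$ destroys the constraint $v'\restric_F=v\restric_F$, so neither the minimality of $\supp(v')\setminus P'$ nor \autoref{minisind} is contradicted. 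Your other two candidates fare no better: the greedy one-element-at-a-time process need not terminate (as you note), and defining $P'$ as the union of the extra supports of minimal witnesses over \emph{all} vectors $v$ is circular (the witnesses are taken relative to $X=P'$) and in any case a union of infinitely many independent increments need not be $\Pi$-independent.

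The missing idea is a finiteness reduction that you never invoke: because $F$ is finite, $(V_\Pi/P)\restric_F$ is a finite-dimensional subspace of $k^F$, so it suffices to produce witnesses for the finitely many elements of a base of $(\Pi/P)\restric_F$. This is what the paper does: lift each base element to a vector of $\Pi$ supported in $F\cup P$, and apply \autoref{O3pres} and \autoref{minisind} \emph{successively}, at step $i$ taking $X$ to be the union $P'_{i-1}$ of the extra supports already chosen; the new increment $\supp(v_i)\setminus(F\cup P'_{i-1})$ lies in $P$ and is $\Pi/P'_{i-1}$-independent, so the union stays $\Pi$-independent, and the process stops after finitely many steps. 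Linearity then gives $(V_\Pi/P)\restric_F\se(V_\Pi/P')\restric_F$, the reverse inclusion is trivial since $P'\se P$, and the covector halves of the two restrictions agree because, by \autoref{finpres}, a presentation on the finite set $F$ is determined by its vectors (a point your write-up also leaves untreated). Without the finite base and the ``contract what you have already built'' bookkeeping, none of your proposed definitions of $P'$ closes the simultaneity gap.
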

\begin{proof}
We successively apply \autoref{O3pres} and \autoref{minisind} to elements of a base of $(\Pi / P) \restric_F$.
\end{proof}

\section{Trees of presentations} \label{treepres}

We can now mimic the construction of \cite{BC:determinacy} to glue together trees of presentations.

\begin{dfn}
A {\em tree of presentations} $\Tcal$ consists of a tree $T$, together with functions $\overline V$ and $\overline W$ assigning to each node $t$ of $T$ a presentation $\Pi(t) = (\overline V(t), \overline W(t))$ on the ground set $E(t)$, such that for any two nodes $t$ and $t'$ of $T$, $E(t) \cap E(t')$ is finite and if $E(t) \cap E(t')$ is nonempty then $tt'$ is an edge of $T$.

For any edge $tt'$ of $T$ we set $E(tt') = E(t) \cap E(t')$. We also define the {\em ground set} of $\Tcal$ to be $E = E(\Tcal) = \left(\bigcup_{t \in V(T)} E(t)\right) \setminus \left(\bigcup_{tt' \in E(T)} E(tt')\right)$. 

We shall refer to the edges which appear in some $E(t)$ but not in $E$ as {\em dummy edges} of $M(t)$: thus the set of such dummy edges is $\bigcup_{tt' \in E(T)} E(tt')$.
\end{dfn}

In sticking together such a tree of presentations, we shall make use of some additional information, namely a set $\Psi$ of ends of $T$. We think of the ends in $\Psi$ as being available to be used by the new vectors and those in the complement $\Psi\ct$ of $\Psi$ as being available to be used by the new covectors. More formally:

\begin{dfn}
Let $\Tcal = (T, \overline V, \overline W)$ be a tree of presentations. A {\em pre-vector} of $\Tcal$ is a pair $(S, \overline v)$, where $S$ is a subtree of $T$ and $\overline v$ is a function sending each node $t$ of $S$ to some $\overline v(t) \in \overline V(t)$, such that for each $t\in S$ we have 
$\overline v(t) \restric_{E(tu)} = \overline v(u) \restric_{E(tu)}\neq 0$ if $u\in S$, and $\overline v(t) \restric_{E(tu)}=0$ otherwise. 
 The {\em underlying vector} $\underline{(S, \overline v)}$ of $(S, \overline v)$ is the element of $k^{E(\Tcal)}$ which at a given $e \in E(\Tcal)$ takes the value $\overline v(t)(e)$ if there is some $t \in S$ with $e \in E(t)$, and otherwise takes the value 0. The {\em support} $\supp(S, \overline v)$ of a pre-vector is the support of the underlying vector.

Now let $\Psi$ be a set of ends of $T$. A pre-vector $(S, \overline v)$ is a {\em $\Psi$-pre-vector} if all ends of $S$ are in $\Psi$. The space $V_{\Psi}(\Tcal)$ of {\em $\Psi$-vectors} is the subspace of $k^E$ generated\footnote{under finite linear combinations} by the underlying vectors of $\Psi$-pre-vectors. 

A {\em pre-covector} of $\Tcal$ is a pair $(S, \overline w)$, where $S$ is a subtree of $T$ and $\overline w$ is a function sending each node $t$ of $S$ to some $\overline w(t) \in \overline W(t)$, such that for each $t\in S$ we have 
$\overline w(t) \restric_{E(tu)} = - \overline w(u) \restric_{E(tu)} \neq 0$ if $u\in S$, and $\overline w(t) \restric_{E(tu)}=0$ otherwise (note the change of sign from the definition of pre-vectors). Underlying covectors and supports are defined as above. A pre-covector $(S, \overline w)$ is a {\em $\Psi$-pre-covector} if all ends of $S$ are in $\Psi$. The space $W_{\Psi}(\Tcal)$ of {\em $\Psi\ct$-covectors} is the subspace of $k^E$ generated by the underlying covectors of $\Psi\ct$-pre-covectors. Finally, $\Pi_{\Psi}(\Tcal)$ is the pair $(V_{\Psi}(\Tcal), W_{\Psi}(\Tcal))$. We may omit the subscripts from $V_{\Psi}(\Tcal)$, $W_{\Psi}(\Tcal)$ and $\Pi_{\Psi}(\Tcal)$ if the set of ends of $T$ is empty.
\end{dfn}

\begin{rem}\label{minors}
Let $P$ and $Q$ be sets which don't meet any of the sets $E(tu)$ with $tu$ an edge of $T$. Then $\Pi_{\Psi}(\Tcal)/P\backslash Q = \Pi_{\Psi}(T, \overline V / P \backslash Q, \overline W \backslash P / Q)$, where $\overline V / P \backslash Q \colon t \mapsto \overline V (t) /P \backslash Q$ and $\overline W \backslash P / Q \colon t \mapsto \overline W(t) \backslash P /Q$.
\end{rem}

Our notation suggests that $V_{\Psi}(\Tcal)$ and $W_{\Psi}(\Tcal)$ should be orthogonal. This is often true, but as the following example shows some extra restriction is needed to ensure that intersections of supports of vectors with supports of covectors are finite.

\begin{eg}\label{cex}
Let $(V, W)$ be any presentation having some vector $v$ of infinite support and some covector $w$ of infinite support. Let $(e_i | i \in \Nbb)$ be an infinite sequence of distinct elements of $\supp(v) \setminus \supp(w)$ and $(f_i | i \in \Nbb)$ an infinite sequence of distinct elements of $\supp(w) \setminus \supp(v)$. We also introduce for each $i \in \Nbb$ the presentation $\Pi_i = (V_i, W_i)$ on ground set $E_i = (e_i, f_i, g_i, h_i)$, where the $g_i$ and $h_i$ are all chosen distinct and outside $E$, and where $V_i = \{v \in k^{E_i} | v(f_i) = 0 \text{ and } v(g_i) = v(h_i)\}$ and $W_i = \{w \in k^{E_i} | w(e_i) = 0 \text{ and } w(g_i) = -w(h_i)\}$. Let $v_i \in V_i$ be the vector taking the value $v(e_i)$ at $e_i$, $g_i$ and $h_i$ and 0 at $f_i$. Let $w_i \in W_i$ be the covector taking the value 0 at $e_i$, $-w(f_i)$ at $f_i$ and $g_i$ and $w(f_i)$ at $h_i$.

Let $T$ be the star with central node $*$ and whose leaves are the natural numbers. Then we get a tree of presentations $\Tcal = (T, \overline V, \overline W)$ by letting $\overline V(*) = V$ and $\overline V(i) = V_i$ for each $i \in \Nbb$ and defining $\overline W$ similarly. We get a pre-vector $(T, \overline v)$ by letting $\overline v(*) = v$ and $\overline v(i) = v_i$ and a pre-covector $(T, \overline w)$ by letting $\overline w(*) = w$ and $\overline w(i) = w_i$. Then the intersection of the supports of $(T, \overline v)$ and $(T, \overline w)$ includes $\bigcup_{i \in \Nbb}\{g_i, h_i\}$, and so is infinite.
\end{eg}

In order to avoid this sort of situation, we introduce the following restriction:

\begin{dfn}
Let $\Pi$ be a presentation on a set $E$, and let $\Fcal$ be a set of disjoint subsets of $E$. We say that $\Pi$ is {\em neat} with respect to $\Fcal$ if for any $v \in V_{\Pi}$ and $w \in W_{\Pi}$ there are only finitely many $F \in \Fcal$ meeting the supports of both $v$ and $w$. We say that a tree $\Tcal = (T, \overline V, \overline W)$ of presentations is {\em neat} if for each node $t$ of $T$ the presentation $\Pi(t)$ is neat with respect to the set of sets $E(tu)$ with $u$ adjacent to $t$ in $T$.
\end{dfn}

\begin{lem}
Let $\Tcal = (T, \overline V, \overline W)$ be a neat tree of presentations, and $\Psi$ a set of ends of $T$. Then $V_{\Psi}(\Tcal) \perp W_{\Psi}(\Tcal)$. 
\end{lem}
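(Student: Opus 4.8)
The plan is to reduce the claim to a statement about a single pre-vector and a single pre-covector, since $V_\Psi(\Tcal)$ and $W_\Psi(\Tcal)$ are generated under finite linear combinations by the underlying vectors of $\Psi$-pre-vectors and $\Psi\ct$-pre-covectors respectively, and orthogonality is bilinear and respected by finite linear combinations. So it suffices to show that for a $\Psi$-pre-vector $(S,\overline v)$ and a $\Psi\ct$-pre-covector $(S',\overline w)$, the underlying vectors $v := \underline{(S,\overline v)}$ and $w := \underline{(S',\overline w)}$ satisfy $\sum_{e \in E} v(e)w(e) = 0$, including the implicit claim that only finitely many summands are nonzero.

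**First I would** analyse the intersection $\supp(v) \cap \supp(w)$. Since $\Psi \cap \Psi\ct = \emptyset$, the subtrees $S$ and $S'$ cannot share an end, so $S \cap S'$ (an intersection of subtrees of $T$) is a subtree with no ends, hence by König's Lemma (\autoref{Infinity_Lemma}) it is finite — possibly empty. Any $e \in E(\Tcal)$ contributing to $\supp(v) \cap \supp(w)$ lies in some $E(t)$ with $t \in S$ and some $E(t')$ with $t' \in S'$; since ground-set elements of $\Tcal$ are not dummy edges, this forces $t = t'$ (else $E(t)\cap E(t')$ would be a set of dummy edges not containing $e$), so $t \in S\cap S'$. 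Thus $\supp(v)\cap\supp(w) \subseteq \bigcup_{t \in S\cap S'}\supp(\overline v(t))\restric_{E(t)}$. For each such $t$ we have $\overline v(t) \perp \overline w(t)$ (both lie in the orthogonal spaces of the presentation $\Pi(t)$), so in particular $\supp(\overline v(t))\cap\supp(\overline w(t))$ is finite; since $S\cap S'$ is finite, $\supp(v)\cap\supp(w)$ is finite and the sum $\sum_e v(e)w(e)$ is well defined.

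**Next**, to compute the sum, I would telescope over the finite tree $S\cap S'$. The idea: $\sum_{e\in E} v(e)w(e)$ equals $\sum_{t\in S\cap S'} \sum_{e\in E(t)\setminus(\text{dummy edges})} \overline v(t)(e)\overline w(t)(e)$, since each $e\in\supp(v)\cap\supp(w)$ is a non-dummy element of exactly one $E(t)$ with $t\in S\cap S'$. For each $t$, orthogonality $\overline v(t)\perp\overline w(t)$ in $\Pi(t)$ gives $\sum_{e\in E(t)}\overline v(t)(e)\overline w(t)(e)=0$, i.e. the non-dummy contribution equals $-\sum_{tu\in E(T)}\sum_{e\in E(tu)}\overline v(t)(e)\overline w(t)(e)$. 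Now sum over $t\in S\cap S'$: the terms indexed by edges $tu$ with $u\notin S\cap S'$ vanish, because either $u\notin S$ (so $\overline v(t)\restric_{E(tu)}=0$) or $u\notin S'$ (so $\overline w(t)\restric_{E(tu)}=0$); and for an edge $tu$ with both endpoints in $S\cap S'$, the contribution from $t$ is $\sum_{e\in E(tu)}\overline v(t)(e)\overline w(t)(e)$ and from $u$ is $\sum_{e\in E(tu)}\overline v(u)(e)\overline w(u)(e)$, and these cancel because $\overline v(t)\restric_{E(tu)}=\overline v(u)\restric_{E(tu)}$ while $\overline w(t)\restric_{E(tu)}=-\overline w(u)\restric_{E(tu)}$ (the sign convention in the definition of pre-covectors). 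Hence $\sum_{e\in E}v(e)w(e)=0$.

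**The main obstacle** is purely bookkeeping: making sure the double sum over $(t,e)$ pairs is genuinely finite before rearranging it, so that the telescoping manipulation is legitimate rather than a formal symbol-pushing argument over a divergent series. This is where \emph{neatness} is not even needed for a single pre-vector/pre-covector pair — the real content there came entirely from $\Psi\cap\Psi\ct=\emptyset$ forcing $S\cap S'$ finite via König's Lemma. (Neatness becomes essential only when one passes to finite linear combinations and must control, for a fixed vector $v\in V_\Psi(\Tcal)$ and covector $w\in W_\Psi(\Tcal)$, how the various pre-vectors' and pre-covectors' supports interact at a single node — but for the bilinear-extension argument above, the per-generator finiteness already proven suffices, since a finite sum of finite sets is finite.) I would therefore state the single-generator case as the crux, verify the finiteness carefully, and then note the extension to arbitrary elements of $V_\Psi(\Tcal)$ and $W_\Psi(\Tcal)$ is immediate by bilinearity.
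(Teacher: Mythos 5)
There is a genuine gap, and it sits exactly where you dismiss the hypothesis of the lemma. You argue that $S \cap S'$ has no ends (correct, since its ends would lie in $\Psi \cap \Psi\ct = \emptyset$) and then conclude it is finite ``by K\"onig's Lemma''. But K\"onig's Lemma (\autoref{Infinity_Lemma}) only says that a \emph{locally finite} rayless tree is finite, and nothing in the definition of a tree of presentations makes $T$, or $S \cap S'$, locally finite: nodes may have infinitely many neighbours. Your parenthetical claim that neatness is not needed for a single pre-vector/pre-covector pair is refuted by \autoref{cex} in the paper: there $T$ is an infinite star (rayless, so $S \cap S' = T$ has no ends), yet a single pre-vector and a single pre-covector have infinite support intersection, so the sum $\sum_e v(e)w(e)$ is not even well defined in the paper's sense. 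Without finiteness of $S\cap S'$ both your finiteness claim for $\supp(v)\cap\supp(w)$ and the subsequent telescoping rearrangement collapse.

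The role of neatness is precisely to supply the missing local finiteness, and this is how the paper's proof runs. If $t \in S \cap S'$ and $u$ is a neighbour of $t$ lying in $S \cap S'$, then by the definitions of pre-vectors and pre-covectors $\overline v(t)\restric_{E(tu)} \neq 0$ and $\overline w(t)\restric_{E(tu)} \neq 0$, so $E(tu)$ meets both $\supp(\overline v(t))$ and $\supp(\overline w(t))$; neatness of $\Pi(t)$ with respect to the family of sets $E(tu)$ says only finitely many $E(tu)$ can do this, so $t$ has finite degree in $S \cap S'$. Raylessness plus this local finiteness then gives, via K\"onig's Lemma, that $S \cap S'$ is finite, after which your remaining steps (per-node finiteness of $\supp(\overline v(t)) \cap \supp(\overline w(t))$ from $\overline v(t) \perp \overline w(t)$, cancellation across edges of $S \cap S'$ using the sign convention, vanishing on edges leaving $S \cap S'$, and the bilinear extension to finite linear combinations) agree with the paper's argument and are fine. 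So the fix is a single missing observation, but it is the one place where the hypothesis of the lemma actually enters.
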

\begin{proof}
It suffices to show that for any $\Psi$-pre-vector $(S, \overline v)$ and any $\Psi\ct$-pre-covector $(S', \overline w)$ we have $\underline{(S, \overline v)} \perp \underline{(S', \overline w)}$. All ends of the tree $S \cap S'$ must be in $\Psi \cap \Psi \ct = \emptyset$: that is, $S \cap S'$ is rayless. Since $\Tcal$ is neat, each vertex of $S \cap S'$ has finite degree in $S \cap S'$. 
Thus by K\"onig's Lemma the tree $S \cap S'$ is finite. The intersection of the supports of $(S, \overline v)$ and $(S', \overline w)$ is a subset of the finite set $\bigcup_{t \in S \cap S'} (\supp(\overline v(t)) \cap \supp(\overline w(t)))$ and so is finite.

For any edge $tu$ of $S \cap S'$ and $e \in E(tu)$ we have $\overline v(t)(e) \overline w(t)(e) + \overline v(u)(e) \overline w(u)(e) = 0$, and so we have
$$\sum_{e \in E} \underline{(S, \overline v)}(e) \underline{(S', \overline w)}(e) = \sum_{t \in S \cap S'}\sum_{e \in E(t)} \overline v(t)(e) \overline w(t)(e) = 0 \,.$$
\end{proof}

However, our aim is to use the construction of $\Pi_{\Psi}(\Tcal)$ to produce matroids, so we are also interested in the question of when 
$\Pi_\Psi(\Tcal)$ presents a matroid, that is, the minimal nonempty $\Psi$-vectors and the  minimal nonempty $\Psi\ct$-covectors satisfy (O2) and (IM). It is not even clear that our construction will yield matroids when applied to the simplest sorts of trees, namely stars with all leaves finite. More precisely:

\begin{dfn}
Let $\Pi$ be a presentation on a set $E$ and let $\Fcal$ be a set of disjoint subsets of $E$. An {
\em $\Fcal$-star} of presentations around $\Pi$ is a tree $(T, \overline V, \overline W)$ of presentations where $T$ is the star with central node $*$ and leaf set $\Fcal$, $(\overline V(*), \overline W(*)) = \Pi$, and for each $F \in \Fcal$ the set $E(F)$ is finite and $E(*F) = F$. We say that $\Pi$ is {\em stellar} with respect to $\Fcal$ if for any $\Fcal$-star $\Tcal$ of presentations around $\Pi$, the pair $\Pi_{\emptyset}(\Tcal)$ is a presentation and presents a matroid. We say that a tree $\Tcal = (T, \overline V, \overline W)$ of presentations is {\em stellar} if for each node $t$ of $T$ the presentation $\Pi(t)$ is stellar with respect to the set of sets $E(tu)$ with $u$ adjacent to $t$ in $T$.
\end{dfn}

\begin{rem}\label{stellareg}There are many examples of stellar presentations. For example, if $\Pi$ is finitary\footnote{A presentation $(V,W)$ is \emph{finitary} if every element in $V$ is finite. } or $\Fcal$ is finite then $\Pi$ is stellar with respect to $\Fcal$. If $\Pi'$ is a minor of $\Pi$ on the set $E'$ and $\Pi$ is stellar with respect to $\Fcal$ then $\Pi'$ is stellar with respect to $\{F \cap E' | F \in \Fcal\}$. Furthermore, if $\Pi$ is stellar with respect to $\Fcal$ and $\Fcal'$ is a set of disjoint sets such that each $F' \in \Fcal'$ is a subset of some $F \in \Fcal$ then $\Pi$ is also stellar with respect to $\Fcal'$. This fact, together with the construction given in \autoref{cex}, shows that if $\Pi$ is stellar with respect to $\Fcal$ then it is necessarily also neat with respect to $\Fcal$.
\end{rem}

Our strategy, aiming at maximal generality, is to leave the question of precisely which presentations are stellar open but to reduce the question of when sticking together trees of presentations gives a presentation of a matroid to this problem. That is, we shall show that if $\Tcal$ is a stellar tree of presentations and $\Psi$ is a Borel set of ends then $\Pi_{\Psi}(\Tcal)$ is a presentation of a matroid. (O2) will be proved in \autoref{O2} and (IM) in \autoref{IM}. We note, however, that the following question remains open:

\begin{oque}
If a presentation $\Pi$ is neat with respect to some $\Fcal$, must it also be stellar with respect to $\Fcal$?
\end{oque}

We will rely on the following straightforward rearrangement of the definition of stellarity:

\begin{lem}\label{stellagain}
Let $\Pi = (V, W)$ be a presentation on a set $E$ which is stellar with respect to $\Fcal \subseteq \Pcal(E)$, and let $F_0 \in \Fcal$ and $w_0 \in k^{F_0}$. Let $Q$ be a set disjoint from all $F \in \Fcal$. For each $F \in \Fcal - F_0$ let $W(F)$ be a subset of $k^F$. Suppose that for every $v \in V$ with $v \not \perp w_0$ and $\supp(v) \cap Q = \emptyset$ there is some $F \in \Fcal - F_0$ and some $w \in W(F)$ such that $w \not \perp v$. Then there is some $w \in W$ such that $w \restric_{F_0} = w_0$, $\supp(w) \subseteq Q \cup \bigcup \Fcal$, and for each $F \in \Fcal - F_0$ we have $w \restric_F \in \langle W(F) \rangle$.
\end{lem}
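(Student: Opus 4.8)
The plan is to apply the definition of stellarity to a carefully constructed $\Fcal$-star around $\Pi$ and to read off the desired covector from a single application of (O2).

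First I would dispose of the trivial case $w_0 = 0$ by taking $w = 0$, and reduce to $Q \se E$ by replacing $Q$ with $Q \cap E$ (this changes neither the hypothesis, since $\supp(v) \se E$ for $v \in V$, nor the conclusion); since $Q$ is disjoint from every $F \in \Fcal$ we then have $Q \se E \sm \bigcup\Fcal$. Next I would pick an element $z \notin E$ and build the $\Fcal$-star $\Tcal$ around $\Pi$ whose leaf at $F \in \Fcal - F_0$ is the presentation $(\langle W(F)\rangle^\perp, \langle W(F)\rangle)$ on $E(F) := F$, and whose leaf at $F_0$ is the presentation $(\langle w_0 + \mathbb1_z\rangle^\perp, \langle w_0 + \mathbb1_z\rangle)$ on $E(F_0) := F_0 + z$. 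These are genuine presentations by \autoref{finpres} (on a finite set the listed pairs are orthogonal complements; note $w_0 + \mathbb1_z \neq 0$), and $\Tcal$ is readily seen to be an $\Fcal$-star around $\Pi$; so by stellarity $\Pi_\emptyset(\Tcal)$ is a presentation, and in particular $\bigl(S(V_\emptyset(\Tcal)), S(W_\emptyset(\Tcal))\bigr)$ satisfies (O2). The ground set of $\Tcal$ is $E_\Tcal := (E \sm \bigcup\Fcal) \cup \{z\}$.

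The heart of the argument is to describe $V_\emptyset(\Tcal)$ and $W_\emptyset(\Tcal)$ explicitly. Since a star has no ends, every pre-vector is an $\emptyset$-pre-vector, and its underlying subtree is empty, a single leaf, the central node alone, or the central node together with a nonempty set of leaves; and a leaf occurring without the central node forces (by the compatibility condition on dummy edges) its leaf-vector to be $0$ for our choices of leaf presentations, so such pre-vectors contribute nothing. Carrying out this case analysis — and noting that the set of underlying vectors of pre-vectors is already closed under linear combinations — I expect to obtain
$$V_\emptyset(\Tcal) = \bigl\{\, v\restric_{E\sm\bigcup\Fcal} - \langle v, w_0\rangle\,\mathbb1_z \ :\ v \in V,\ v\restric_F \in \langle W(F)\rangle^\perp \text{ for all } F \in \Fcal - F_0 \,\bigr\}$$
and, using in addition the opposite sign convention in the pre-covector compatibility condition,
$$W_\emptyset(\Tcal) = \bigl\{\, w\restric_{E\sm\bigcup\Fcal} + \mu\,\mathbb1_z \ :\ \mu \in k,\ w \in W,\ w\restric_{F_0} = -\mu w_0,\ w\restric_F \in \langle W(F)\rangle \text{ for all } F \in \Fcal - F_0 \,\bigr\}.$$
The only features I actually need are: the $z$-coordinate of a vector of $\Pi_\emptyset(\Tcal)$ is $-\langle v, w_0\rangle$ for its associated $v \in V$; and a covector of $\Pi_\emptyset(\Tcal)$ with nonzero $z$-coordinate $\mu$ has associated $w \in W$ with $w\restric_{F_0} = -\mu w_0 \neq 0$.

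Finally I would apply (O2) to the partition $E_\Tcal = P \,\dot\cup\, Q \,\dot\cup\, \{z\}$ with $P := (E\sm\bigcup\Fcal) \sm Q$. If there is a vector of $\Pi_\emptyset(\Tcal)$ through $z$ supported in $P + z$, writing it as $v\restric_{E\sm\bigcup\Fcal} - \langle v, w_0\rangle\mathbb1_z$ gives $v \not\perp w_0$ and $\supp(v) \cap Q = \emptyset$, whence the hypothesis produces $F \in \Fcal - F_0$ and $w \in W(F)$ with $w \not\perp v$ — contradicting $v\restric_F \in \langle W(F)\rangle^\perp$. So (O2) instead yields a covector of $\Pi_\emptyset(\Tcal)$ through $z$ supported in $Q + z$; writing it as $w\restric_{E\sm\bigcup\Fcal} + \mu\mathbb1_z$, one has $\mu \neq 0$, $\supp(w) \se Q \cup \bigcup\Fcal$ (as $w$ vanishes on $P$), $w\restric_{F_0} = -\mu w_0$, and $w\restric_F \in \langle W(F)\rangle$ for each $F \in \Fcal - F_0$; then $-\mu^{-1}w$ is the required covector of $\Pi$. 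The main obstacle is getting the two displayed descriptions right — in particular handling the dummy-edge compatibility conditions for leaf-only subtrees and keeping track of the sign difference between pre-vectors and pre-covectors; everything after that is bookkeeping.
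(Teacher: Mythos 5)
Your proposal is correct, and it reaches the conclusion by a slightly different mechanism than the paper, although the core move is the same: both proofs apply stellarity to a star whose leaf at each $F \in \Fcal - F_0$ carries the presentation $(\langle W(F)\rangle^{\perp}, \langle W(F)\rangle)$ (a presentation by \autoref{finpres}), and both derive the desired covector from the fact that the glued object $\Pi_\emptyset(\Tcal)$ is a presentation. The difference is in how $F_0$ and $w_0$ are handled. The paper keeps $F_0$ in the ground set (it uses the $(\Fcal - F_0)$-star), passes to the minor $(\Pi_{\emptyset}(\Tcal)\backslash Q).F_0$ on the finite set $F_0$, and uses orthogonal complementation there (\autoref{finpres} again) to convert ``no vector avoiding $Q$ and non-orthogonal to $w_0$'' into ``$w_0$ is a covector of the minor'', which then unwinds to the required $w \in W$. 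You instead make $F_0$ itself a leaf, augmented by an auxiliary element $z$ with the rank-one covector space $\langle w_0 + \mathbb{1}_z\rangle$, and extract the conclusion from a single application of (O2) of $\Pi_\emptyset(\Tcal)$ at $z$; this has the small advantages of invoking stellarity with respect to $\Fcal$ itself (the paper implicitly uses \autoref{stellareg} to pass to $\Fcal - F_0$) and of replacing the complementation step by the axiom (O2) directly. The cost is that you must compute $V_\emptyset(\Tcal)$ and $W_\emptyset(\Tcal)$ explicitly, and that is indeed where all the content sits -- but your two displayed descriptions are correct: since a star has no rays, every pre-(co)vector qualifies; leaf-only pre-vectors and (for $w_0 \neq 0$) leaf-only pre-covectors are forced to vanish by the ``restriction $=0$ off the subtree'' clause; the nonzero-overlap clause is satisfied by letting the subtree consist of the centre together with exactly those leaves where the restriction is nonzero; and the claimed sets are already subspaces (images of subspaces under linear maps), so taking spans adds nothing. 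With those descriptions in hand your final (O2) step and the rescaling $-\mu^{-1}w$ are exactly right, including the conclusion $w\restric_F \in \langle W(F)\rangle$ because $\langle W(F)\rangle$ is a subspace. (Like the paper's own proof, your argument tacitly uses that the members of $\Fcal$, in particular $F_0$, are finite -- which is forced whenever $\Fcal$-stars exist at all and holds in every application -- so this is not a point of difference.)
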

\begin{proof}
Without loss of generality each $W(F)$ is a subspace of the corresponding space $k^F$. Let $V(F) = W(F)^{\perp}$, and $\Pi(F) = (V(F), W(F))$, which is a presentation by \autoref{finpres}. Let $\Tcal = (T, \overline V, \overline W)$ be the $(\Fcal - F_0)$-star of presentations around $\Pi$, where the presentation at the leaf $F$ is $\Pi(F)$. Since $\Pi$ is stellar, $\Pi_{\emptyset}(\Tcal)$ is a presentation. Now we consider the presentation $(\Pi_{\emptyset}(\Tcal) \backslash Q).F_0$, which by \autoref{finpres} consists of a pair $(V_0, W_0)$ of complementary subspaces of $k^{F_0}$. What we have to prove is just that $w_0 \in W_0$.

Suppose not for a contradiction. Then there is some $v_0 \in V_0$ with $v_0 \not \perp w_0$. By definition this $v_0$ must arise as $\overline v(*) \restric_{F_0}$ for some pre-vector $(S, \overline v)$ of $\Tcal$ whose support does not meet $Q$. Then $\overline v(*) \not \perp w_0$, so there is some $F \in \Fcal - F_0$ and some $w \in W(F)$ such that $w \not \perp \overline v(*) \restric_F = \overline v(F)$, contradicting the fact that $\overline v(F) \in V(F)$.
\end{proof}

\section{(O2) for trees of presentations}\label{O2}

Our aim in this section is to show that, for any stellar tree $\Tcal = (T, \overline V, \overline W)$ of presentations and any Borel set $\Psi$ of ends of $T$, the sets $S(V_{\Psi}(\Tcal))$ and $S(W_{\Psi}(\Tcal))$ satisfy (O2). Thus we begin by fixing such a $\Tcal$ and $\Psi$. We also fix a partition $E(\Tcal) = P \dot \cup Q \dot \cup \{e\}$. We shall consider the vertex $t_0$ of $T$ with $e \in E(t_0)$ to be the root of $T$, and we consider $T$ as a directed graph with the edges directed away from $t_0$. To prove (O2), it suffices to prove that there is either a $\Psi$-pre-vector $(S, \overline v)$ with $e \in \supp(S, \overline v) \subseteq P+e$ or else a $\Psi\ct$-pre-covector $(S, \overline w)$ with $\in \supp(S, \overline w) \subseteq Q+e$. To this end, we recall two games, called the {\em circuit game} and {\em cocircuit game}, from \cite{BC:determinacy}. To match the formalism of \autoref{games}, we shall present these games as positional games.

To simplify notation in this section, we shall not distinguish
between an end $\omega$ of a rooted tree $T$ and the unique ray belonging to $\omega$ that starts at the root.

\begin{dfn}
Let $X$ be the set of pairs $(t, v)$ with $t$ a vertex of $T$ and $v \in \overline V(t)$ such that $\supp(v) \cap Q = \emptyset$. Let $Y$ be the set of pairs $(tu, w)$ with $tu$ an edge of $T$ and $w \in k^{E(tu)}$. 

The {\em circuit game} $\Gcal = \Gcal(T, \overline V, \overline W, \Psi, P, Q)$ is the positional game played on the digraph $D$ with vertex set  $X \sqcup Y \sqcup \{a\}$ and with edges given as follows:
\begin{itemize}
\item an edge from $a$ to $(t_0, v) \in X$ when $e \in \supp(v)$.
\item an edge from $(t, v) \in X$ to $(tu, w) \in Y$ when $v \not \perp w$.
\item an edge from $(tu, w) \in Y$ to $(u, w) \in X$ when $v \not \perp w$.
\end{itemize}

Any infinite walk from an outneighbour  of $a$ in $D$ induces an infinite walk from $t_0$ in $T$, which is an end of $T$. The set $\Phi$ of winning conditions of $\Gcal$ is the set of infinite walks from outneighbours of $a$ in $D$ which induce walks to ends in $\Psi$. We call the two players of the circuit game Sarah and Colin, with Sarah playing first.

The {\em cocircuit game} is the game like the dual circuit game $\Gcal(T, \overline W, \overline V, \Psi\ct, Q, P)$ but with the roles of Sarah and Colin reversed.
\end{dfn}

It is not hard to see that this definition is just a reformulation of \cite[Definition 8.1]{BC:determinacy}. Using the arguments of that paper, we may now obtain the following results:

\begin{lem}
Either Sarah or Colin has a winning strategy in the circuit game.\qed
\end{lem}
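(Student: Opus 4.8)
The plan is to deduce this from Borel determinacy, exactly as \autoref{pos_det} was obtained in \autoref{easy_o2}. First I would make precise the passage from infinite plays of the circuit game to ends of $T$. An infinite play is a walk $a, (t_0, v_0), (t_0u_1, w_1), (u_1, v_1), (u_1u_2, w_2), (u_2, v_2), \dots$ in $D$; reading off the first coordinates of the positions lying in $X$ gives a sequence $t_0, u_1, u_2, \dots$ of vertices of $T$ in which consecutive terms are adjacent, since a move from $(t,v) \in X$ to $(tu, w) \in Y$ and then to $(u, w) \in X$ traverses the edge $tu$ directed away from the root $t_0$. As $T$ is a tree rooted at $t_0$, this sequence is a ray starting at $t_0$ and hence determines an end $f(P) \in \Omega(T)$. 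If one prefers the usual formulation of determinacy in which every play is infinite, one first adjoins to each position from which the player to move is stuck a single out-edge to a new sink carrying a loop, recording arrival at that sink as a loss for the stuck player; this leaves $\Phi$ unchanged up to a trivial Borel modification.

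Next I would verify that $f$ is continuous. Give the set of infinite plays the topology in which a basic open set is determined by fixing a finite initial segment of the play, and $\Omega(T)$ its usual topology, for which --- since $T$ is rooted --- a basic open set is determined by an initial segment of a ray from $t_0$. Any finite initial segment of the ray $f(P)$ depends only on a finite initial segment of $P$, so the preimage under $f$ of any basic open set of $\Omega(T)$ is open; thus $f$ is continuous. Since $\Psi$ is Borel and $\Phi = f^{-1}(\Psi)$, the set $\Phi$ is Borel.

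Finally I would invoke Martin's Borel determinacy theorem, which applies to positional games with arbitrary (here possibly uncountable) move sets and Borel winning conditions, to conclude that the circuit game is determined; that is, Sarah or Colin has a winning strategy. There is essentially no substantive obstacle here --- this is why the result is simply ``obtained from the arguments of \cite{BC:determinacy}''. The only points needing any care are the routine check of continuity of $f$ and, if one insists on the all-plays-infinite formulation, the trivial sink-and-loop modification above; the determinacy of such games given a Borel payoff is exactly the fact already used as \autoref{pos_det}.
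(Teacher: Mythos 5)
Your proposal is correct and matches the intended argument: the paper proves this lemma only by appeal to the arguments of \cite{BC:determinacy}, which is exactly the route you take --- the play-to-end map is continuous, so the Borel set $\Psi$ pulls back to a Borel winning condition $\Phi$, and Martin's Borel determinacy (valid for arbitrary, possibly uncountable, move sets) gives determinacy, precisely as in \autoref{pos_det}. Your handling of finite (stuck) plays via the sink modification is a routine but legitimate way to fit the positional game into the standard determinacy framework.
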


\begin{lem}
Colin has a winning strategy in the circuit game if and only if he has one in the cocircuit game.
\end{lem}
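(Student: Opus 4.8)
I would prove the two implications separately; in each case I would turn a winning strategy of Colin in one game into a winning strategy of Colin in the other, passing information between the nodes through the restriction maps to the dummy edges. For the direction \emph{from the cocircuit game to the circuit game}, suppose Colin has a winning strategy $\rho$ in the cocircuit game, where he is the player who builds covectors and wins when the induced end lies in $\Psi\ct$. I would have Colin play the circuit game while privately running $\rho$ against an imaginary opponent: at a position $(t,v)$ he asks $\rho$ for the covector $w_t\in\overline W(t)$ it would have him play at $t$. Since $\Pi(t)$ is a presentation, $\overline V(t)\perp\overline W(t)$, so $v\perp w_t$; decomposing $\sum_{f\in E(t)}v(f)w_t(f)=0$ according to the real edges of $E(t)$ and the edges of $E(t)$ to the parent and to the children of $t$, the real-edge terms vanish (as $\supp(v)\cap Q=\emptyset$ and $\supp(w_t)\cap P=\emptyset$), except that at $t=t_0$ the term at $e$ is nonzero since $e\in\supp(v)$ and $e\in\supp(w_{t_0})$, the latter being forced by the opening rule of the cocircuit game; and at $t\ne t_0$ the parent-edge term is nonzero, because $w_t$ was $\rho$'s legal response to an imaginary challenge whose fragment is $v$ restricted to that edge. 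So some child-edge term is nonzero, giving a child $u$ of $t$ with $v\not\perp w_t\restric_{E(tu)}\ne 0$; Colin challenges along $tu$ with $w_t\restric_{E(tu)}$, and when Sarah answers with $v'$ at $u$ he feeds $v'\restric_{E(tu)}$ to $\rho$ as the next imaginary challenge. Colin never gets stuck, the imaginary play is a legal $\rho$-play visiting the same nodes as the real one, so if the real play is infinite its end equals that of the imaginary play, hence lies in $\Psi\ct$, and Colin wins.

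For the direction \emph{from the circuit game to the cocircuit game} one must manufacture covectors. From a winning strategy $\tau$ of Colin in the circuit game I would build, recursively downward from $t_0$, a $\Psi\ct$-pre-covector $(S,\overline w)$ with $e\in\supp(S,\overline w)\se Q+e$, together with, for each $t\in S$, a set $\Pi_t$ of finite $\tau$-plays ending at $t$; playing this pre-covector then wins the cocircuit game for Colin, since the pre-covector conditions force every challenge of Sarah along an edge of $S$, so the play cannot leave $S$. To obtain $\overline w(t)$ from the data already fixed above $t$, apply \autoref{stellagain} to $\Pi(t)$ with $F_0$ the edge from $t$ to its parent (at $t_0$, the singleton $\{e\}$, which may be adjoined to the dummy edges without losing stellarity), with $w_0$ the value imposed by the pre-covector conditions (at $t_0$, any nonzero value at $e$), with the set ``$Q$'' of \autoref{stellagain} equal to $Q\cap E(t)$, and with $W(E(tu))$ the set of fragments with which $\tau$ challenges along $tu$ in answer to plays in $\Pi_t$. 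Its hypothesis holds precisely because $\tau$ is winning: any admissible vector $v$ at $t$ is the last position of some play in $\Pi_t$, and $\tau$ must answer with a legal challenge, which is along some child edge $tu$ with a fragment not orthogonal to $v$. The output $\overline w(t)$ has real support in $Q+e$, agrees with the prescription on the parent edge, and restricts on each child edge into $\langle W(E(tu))\rangle$; put $u\in S$ precisely when $\overline w(t)\restric_{E(tu)}\ne 0$, and let $\Pi_u$ consist of the plays of $\Pi_t$ along which $\tau$ challenged $tu$, each extended by one legal response of Sarah at $u$. The recursion runs because the hypothesis of \autoref{stellagain} is inherited at a child $u$: given an admissible $v'$ at $u$ not orthogonal to the prescribed parent fragment $-\overline w(t)\restric_{E(tu)}$, write $\overline w(t)\restric_{E(tu)}$ as a combination of fragments in $W(E(tu))$, pick one, $w_\ast$, with $v'\not\perp w_\ast$, and a vector $v_\ast$ at $t$ witnessing $w_\ast\in W(E(tu))$; the $\tau$-play to $(t,v_\ast)$, followed by the challenge $(tu,w_\ast)$ and the response $v'$, lies in $\Pi_u$, and $\tau$'s legal challenge there witnesses the hypothesis at $u$. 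Finally every end of $S$ lies in $\Psi\ct$: $S$ sits inside the subtree of $T$ traversed by $\tau$-plays, and a K\"onig's Lemma argument of the kind used in the proof of \autoref{winning_strat_Colin} shows that every ray of $S$ underlies an infinite $\tau$-play, whose end lies in $\Psi\ct$ since $\tau$ is winning.

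The second implication is the substantial one, and I expect its delicate steps to be verifying that the hypothesis of \autoref{stellagain} propagates down the tree --- which is exactly where stellarity is indispensable --- and managing the bookkeeping of the sets $\Pi_t$ of consistent plays, which is forced on us because Borel determinacy yields only an ordinary, not a positional, winning strategy $\tau$. The concluding K\"onig's Lemma step, and the verification that following the pre-covector is a winning strategy in the cocircuit game, should be routine once this setup is in place.
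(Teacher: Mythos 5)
Your first direction (simulating the cocircuit strategy to guide Colin's challenges in the circuit game via orthogonality at each node) is sound, and your overall plan for the converse --- manufacture covectors node by node with \autoref{stellagain}, keeping books on the circuit-game plays that justify each application --- is the right idea; it is essentially the argument of \cite[Lemma 8.5]{BC:determinacy} that the paper invokes. The genuine gap is in the step you declare routine: the K\"onig's Lemma argument that every ray of $S$ underlies an infinite $\tau$-play. Your propagation of the hypothesis of \autoref{stellagain} forces $\Pi_u$ to contain, for \emph{every} admissible vector $v'$ at $u$, a play ending at $(u,v')$ (your own verification uses exactly such a play), so the sets $\Pi_t$ are in general infinite: the node presentations may have infinitely many vectors, $k$ may be infinite, and $T$ need not be locally finite. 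The argument of \autoref{winning_strat_Colin} that you cite depends on there being only finitely many plays of each bounded length (finite torsos, locally finite tree); here the levels are infinite, K\"onig's Lemma does not apply, and an infinitely branching tree of arbitrarily long $\tau$-plays along a ray of $S$ need not contain an infinite one --- in which case nothing places that end in $\Psi\ct$, and Sarah can steer the cocircuit play along it. This is precisely the difficulty that the paper's reduced-strategy machinery (\autoref{reduced} and the $S_\sigma$ lemma of \autoref{O2}) exists to overcome, and it is why the paper does \emph{not} build the pre-covector inside this lemma: the lemma only transfers Colin's winning strategy to the game in which he moves first, and the end-control is obtained afterwards from a reduced strategy.

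The step can be repaired, but it needs an extra idea rather than routine checking: since each $E(tu)$ is finite and $\overline w(t)\restric_{E(tu)}$ is a finite linear combination of challenge fragments, you may keep, for each edge of $S$, only finitely many ``witnessing'' plays (one per fragment in such a combination), note that each kept play at a node extends a kept play at its predecessor, and run K\"onig's Lemma on these finite levels, while still using the full infinite family of plays to verify the hypothesis of \autoref{stellagain}. A secondary, smaller issue: your treatment of the root assumes that $\{e\}$ ``may be adjoined to the dummy edges without losing stellarity''. That closure property is not among those stated in \autoref{stellareg} (which only allows shrinking the sets of $\Fcal$, taking minors, or finite $\Fcal$), so it needs an argument --- or the root case should be handled directly from the definition of stellarity, mimicking the proof of \autoref{stellagain} with the partition $(P\cap E(t_0),\,Q\cap E(t_0),\,\{e\})$ in place of a prescribed fragment $w_0$.
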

\begin{proof}
Just like the proof of \cite[Lemma 8.5]{BC:determinacy}, but using \autoref{stellagain} in place of \cite[Sublemma 8.6]{BC:determinacy}
\end{proof}

From now on we shall assume that Sarah has a winning strategy $\sigma$ in the circuit game: the argument if Colin has a winning strategy there is dual to the one which follows. Let $S_{\sigma}$ be the subtree of $T$ consisting of those vertices $t$ for which there is some $v$ such that Sarah might at some point play $(t, v)$ when playing according to $\sigma$. We would like to mimic the argument of \cite[Lemma 8.2]{BC:determinacy} to construct a $\Psi$-precircuit from $\sigma$. In order to do this, we would need all ends of $S_{\sigma}$ to be in $\Psi$. Although there is no reason to expect this to happen in general, it will happen if $\sigma$ is reduced.

\begin{lem}
Let $\sigma$ be a reduced winning strategy in the circuit game, and let $S_{\sigma}$ be defined as above. Then all ends of $S_{\sigma}$ are in $\Psi$.
\end{lem}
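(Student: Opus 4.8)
The plan is to show that an end $\omega$ of $S_\sigma$ which is \emph{not} in $\Psi$ would let Colin defeat Sarah's winning strategy, contradiction. So fix a ray $R = t_0 t_1 t_2 \ldots$ in $S_\sigma$ belonging to $\omega \notin \Psi$. The idea is to build a single infinite play of the circuit game which follows $\sigma$ but whose induced ray is exactly $R$; since $\omega \notin \Psi$ this play is not in $\Phi$, so Sarah loses it, contradicting that $\sigma$ is winning.

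The work is in producing that play, and this is where reducedness is used. For each $i$, because $t_i \in S_\sigma$ there is some play $P_i \in \sigma$ in which Sarah plays a position $(t_i, v_i) \in X$; since $R \subseteq S_\sigma$ and $S_\sigma$ is a subtree, we may in fact assume $P_i$ passes through $(t_j,\cdot)$ for every $j \le i$ (this needs a small argument — walk down from $t_i$ towards the root, noting each ancestor also lies in $S_\sigma$; I would phrase it via the description of $S_\sigma$'s vertices and the fact that a play reaching $t_i$ has already reached every ancestor of $t_i$, since the underlying walk in $T$ moves along edges away from the root). Now I want to splice the $P_i$ together into one coherent play. The obstruction is that different $P_i$ may disagree about which vectors/covectors Sarah chose at the early nodes $t_0, \ldots, t_{j}$. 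This is exactly what \autoref{change_a-little} (via \autoref{reduced}) repairs: given $P \in \bar\sigma$ reaching $t_i$ and $P' \in \bar\sigma$ reaching $t_{i+1}$, if their odd-indexed (Sarah) moves agree up through the position at $t_i$, then the hybrid play agreeing with $P$ up to $t_i$ and with $P'$ afterwards is again in $\bar\sigma$.

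So the construction I would carry out: inductively build an increasing sequence of finite plays $s_0 \subseteq s_1 \subseteq \cdots$ in $\sigma$ such that $s_n$ ends at a position $(t_n, \cdot) \in X$ and the induced walk in $T$ is $t_0 t_1 \cdots t_n$. For the step, given $s_n$ ending at $(t_n, v)$, take the play $P_{n+1} \in \sigma$ that reaches $t_{n+1}$ (and hence passes through some position at $t_n$). Since $\sigma$ is reduced and closed under the relevant manipulations, the Sarah-moves of $s_n$ and of the initial segment of $P_{n+1}$ up to $t_n$ must already coincide — here I would invoke that a reduced strategy makes Sarah's move depend only on the $\sim_1$-class of the play so far, formalised in \autoref{change_a-little}: both $s_n$ and the truncation of $P_{n+1}$ to $t_n$ lie in $\bar\sigma$ and have the same induced $T$-walk $t_0\cdots t_n$, from which one extracts that we may glue $s_n$ to the tail of $P_{n+1}$ and stay in $\sigma$. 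Letting $s_{n+1}$ be this glued play extended to the position at $t_{n+1}$ finishes the step. The union of the $s_n$ is then an infinite play belonging to $\sigma$ whose induced ray is $R$, hence lies in $\omega \notin \Psi$, so it is not in $\Phi$ — contradicting that $\sigma$ is winning. Therefore every end of $S_\sigma$ is in $\Psi$.

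The main obstacle is purely the bookkeeping of the gluing step: making precise that ``$\sigma$ reduced'' forces the Sarah-moves along the common prefix $t_0\cdots t_n$ to agree, so that \autoref{change_a-little} applies. Everything else (that ancestors of a node in $S_\sigma$ lie in $S_\sigma$; that an infinite play following $\sigma$ with induced ray $R$ contradicts $\sigma$ being winning when $\omega \notin \Psi$) is routine given the definitions of the circuit game and of $\Phi$.
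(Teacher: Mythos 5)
There is a genuine gap at the heart of your gluing step. You assert that, because $s_n$ and the initial segment of $P_{n+1}$ up to $t_n$ both lie in the reduced strategy and have the same induced walk $t_0\cdots t_n$ in $T$, their Sarah-moves ``must already coincide''. This is false: a position of the circuit game is a pair $(t,v)$, and Sarah's choice of vector $v$ at a node depends on which covectors Colin has played along the way, not merely on which nodes of $T$ have been visited. Reducedness does not make Sarah's moves a function of the induced $T$-walk; it only constrains her to play $\leq$-minimally among responses compatible with the \emph{same} earlier Sarah-moves. Consequently $s_n$ and the truncation of $P_{n+1}$ need not be $\sim_1$-equivalent, and \autoref{change_a-little} --- which requires literal agreement of the odd (Sarah) moves, i.e.\ the same vectors, up to an odd index --- simply does not apply. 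So your inductive construction of the nested plays $s_0\subseteq s_1\subseteq\cdots$ does not go through as written; the coherence you need is exactly the difficult point, not a consequence of reducedness alone.

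What is missing is a finiteness (compactness) argument that makes the splicing possible. The paper's proof shows, using reducedness together with linear algebra, that for each edge $tu$ of $T$ a given Sarah-move-sequence ending at $t$ has at most $|E(tu)|$ extensions in $\tau=\{\hat s\mid s\in\sigma\}$ ending at $u$: if there were more, the corresponding vectors restricted to $k^{E(tu)}$ would be linearly dependent, and then the covector Colin played against the $\leq$-maximal one would also be non-orthogonal to a $\leq$-smaller one, producing a legal play that violates reducedness. Hence along the ray $t_0t_1t_2\ldots$ each set $\tau_n$ of Sarah-move-sequences reaching $t_n$ is finite, and \autoref{Infinity_Lemma} yields a sequence $\widehat{s^n}\in\tau_n$ with $\widehat{s^{n+1}}$ extending $\widehat{s^n}$; only for such a genuinely nested family of Sarah-moves does \autoref{change_a-little} let you splice the plays $s^n$ into one infinite play according to $\sigma$, whence $\omega\in\Psi$. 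Your overall framing (contradiction from an end outside $\Psi$, plus the observation that a play reaching $t_i$ passes through all ancestors) is fine, but without the $|E(tu)|$-branching bound and the appeal to K\"onig's Infinity Lemma the argument does not close.
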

\begin{proof}
For any finite sequence $s$ we denote the last element of $s$ by $l(s)$.
For any finite play $s$ in $\Gcal$, let $\hat s$ be the sequence of moves played by Sarah in $s$ (that is, the sequence $(s_{2k + 1} | 0 \leq k \leq \text{length}(s)/2)$). Let $\tau = \{\hat s | s \in \sigma\}$. 

First of all we will show that for any edge $tu$ of $T$ and any $s \in \tau$ with $\pi_1(l(s)) = t$ there are no more than $|E(tu)|$ extensions $s' \in \tau$ of $s$ with $\pi_1(l(s')) = u$. Suppose for a contradiction that there are more than this. Then each such $s'$ gives rise to a vector $\pi_2(l(s')) \restric_{E(tu)}$ in $k^{E(tu)}$, and there must be some linear dependence of these vectors. So suppose that $\sum_{i = 1}^n \lambda_i \pi_2(l(s^i))\restric_{E(tu)} = 0$, where for each $i$ $\lambda_i$ is nonzero and $s^i$ is an extension of $s$ in $\tau$ with $\pi_1(l(s^i)) = u$. 
Let $k$ be the length of $s$, and let $j$ be such that $l(s^j)$ is maximal in the order $\leq$. Without loss of generality $j = n$. Let $s' = s'_1 ... s'_{2k +1} \in \sigma$ with $\widehat {s'} = s^n$. Then 
$$\pi_2(s'_{2k}) \not \perp \pi_2(l(s^n)) \restric_{E(tu)} = -\frac1{\lambda_n}\sum_{i = 1}^{n-1} \lambda_i \pi_2(l(s^i))\restric_{E(tu)} \, ,$$
so there is some $i < n$ with $\pi_2(s'_{2k}) \not \perp \pi_2(l(s^i)) \restric_{E(tu)}$. But then $s'_1s'_2...s'_{2k}l(s^i)$ is a legal play in $\Gcal$ and $l(s^i) < s'_{2k+1}$, contradicting our assumption that $\sigma$ is reduced. Thus there are at most $|E(tu)|$ (and in particular only finitely many) extensions $s' \in \tau$ of $s$ with $\pi_1(l(s')) = u$.

Now let $\omega = (t_i | i \in \Nbb)$ be any end of $S_{\sigma}$. For each $n$, let $\tau_n$ be the set of those $s \in \tau$ with $\pi_1(l(s)) = t_n$. Then, repeatedly using what we have just shown, it follows by induction on $n$ that each $\tau_n$ is finite. Let $f_n \colon \tau_{n+1}  \to \tau_n$ be given by restriction. Then by K\"onig's Infinity Lemma we can find $s^n \in \sigma$ with $\widehat {s^n} \in \tau_n$ for each $n$ such that $f_n(\widehat {s^{n+1}}) = \widehat {s^n}$ for each $n$. Let $s$ be the infinite sequence $s^1_1s^1_2s^2_3s^2_4s^3_5s^3_6...$. Then $s$ is an infinite play according to $\sigma$ by \autoref{change_a-little}, so since $\sigma$ is winning we have $\omega \in \Psi$. 
\end{proof}

By \autoref{reduced}, we may assume without loss of generality that Sarah's winning strategy $\sigma$ is reduced, and so that all ends of $S_{\sigma}$ are in $\Psi$. It follows, using the argument of \cite[Lemma 8.2]{BC:determinacy}, that there is a $\Psi$-pre-vector $(S, \overline v)$ with $e \in \supp(S, \overline v) \subseteq P+e$. This completes our proof of (O2).

\section{(IM) for trees of presentations}\label{IM}

Our aim in this section is to show that gluing together stellar trees of presentations gives presentations which satisfy (IM), which is the only remaining part of the task of showing that this construction gives rise to matroids. To prove (IM), it suffices by \autoref{base} to show that we can construct a base. We will do this recursively, successively building the parts of the base at each node of the tree. When building the part of the base at a particular node, we will want to ignore the details of the branches of the tree which remain when this node is removed. To this end, we will replace each such branch by a finite matroid which retains just enough information for our argument. This will be done with the following Lemma:

\begin{lem}\label{pickPQ}
Let $\Pi = (V, W)$ be a presentation on a set $E$, and let $F$ be a finite subset of $E$. Then there are disjoint subsets $P_F$ and $Q_F$ of $E \setminus F$ such that $E \setminus (P_F \cup Q_F)$ is finite and $\Pi' \restric_F = \Pi \restric_F$ and $\Pi'.F = \Pi.F$, where $\Pi' = \Pi/P_F \backslash Q_F$.
\end{lem}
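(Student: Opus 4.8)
The plan is to build $P_F$ and $Q_F$ by a "contract-or-delete" procedure on the elements of $E \setminus F$, using the (O3)-principle of \autoref{O3pres} (equivalently \autoref{fromO3}) to control what happens after restriction to $F$, and the dual principle (applied to $W$, i.e. to the cocircuit side) to control what happens after contraction to $F$. First I would isolate the two requirements separately. For $\Pi' \restric_F = \Pi \restric_F$ we need: every vector of $\Pi$ supported inside $F \cup P_F$ but not meeting $Q_F$ is already a vector supported inside $F$ up to the contraction, and conversely; more precisely we need $(\Pi / P_F \backslash Q_F)\restric_F = \Pi\restric_F$, which by the definition of the minor operations on presentations unwinds to a statement about $V / P_F \restric_F = V\restric_F$ together with the matching statement for $W . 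F$. For $\Pi'.F = \Pi.F$ we need the dual: $V . F = (V \backslash Q_F / P_F \text{ appropriately}).F$ and the matching covector statement. The key observation is that these are exactly the kind of statements that \autoref{fromO3} produces when the set to be contracted or deleted is chosen to be a suitable independent (or coindependent) set.

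The main step is the following. Fix a finite base $B$ of $\Pi\restric_F$ and a finite base $B^*$ of the cocircuit-side restriction $\Pi . F$ (both finite since $F$ is finite). Apply \autoref{fromO3} to $\Pi$ with the finite set $F$ and with $P := E \setminus F$: this yields a $\Pi$-independent subset $P_F$ of $E \setminus F$ with $(\Pi / P_F)\restric_F = (\Pi / (E\setminus F))\restric_F$. Now $(\Pi/(E\setminus F))\restric_F$ is a presentation on the \emph{finite} set $F$, so by \autoref{finpres} it is determined by a single subspace, and one checks directly (again from \autoref{finpres}, since everything in sight is finite and complementary) that contracting \emph{all} of $E\setminus F$ and then restricting to $F$ is the same as $\Pi.F$ — no wait, that is not quite what we want either. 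Let me restate the intended choice: we want $P_F$ so that contracting it does not change $\Pi\restric_F$, and we want $Q_F$ so that deleting it does not change $\Pi.F$; and crucially the \emph{complement} $E \setminus (P_F \cup Q_F)$ should come out finite, which forces $P_F \cup Q_F$ to be cofinite in $E \setminus F$. The way to arrange this is to apply \autoref{O3pres} simultaneously to a base $B$ of $(\Pi/(E\setminus F))\restric_F$ on the vector side and a base $B^*$ of the dual restriction on the covector side: each element of $B$ gives, via \autoref{O3pres} and \autoref{minisind}, a vector whose support outside $F$ is a $\Pi/F$-independent \emph{finite} set, and the union $P_F'$ of these finitely many finite sets is a finite subset of $E\setminus F$ such that contracting $P_F'$ already realises $(\Pi / (E \setminus F))\restric_F$ up to the finite set $F \cup P_F'$. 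Dually we get a finite $Q_F'$. Then I would \emph{not} take $P_F = P_F'$; instead I would set $P_F := (E \setminus F) \setminus (F \cup P_F' \cup Q_F' \cup Q_F)$ contracted and $Q_F$ the leftover deleted — i.e. split the cofinite remainder $E \setminus (F \cup P_F' \cup Q_F')$ arbitrarily, putting it (say) entirely into $P_F$, and check that the choices $P_F' , Q_F'$ being independent/coindependent witnesses means this extra bulk contraction/deletion changes nothing on $F$. The finiteness of $E \setminus (P_F \cup Q_F)$ is then immediate: this set is contained in $F \cup P_F' \cup Q_F'$, which is finite.

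The verification that $\Pi'\restric_F = \Pi\restric_F$ and $\Pi'.F = \Pi.F$ then reduces, via \autoref{xmincom} and \autoref{minors}-style commutation of minor operations, to checking equalities of presentations on the finite set $F \cup P_F' \cup Q_F'$, where \autoref{finpres} converts everything into a statement about orthogonal complements of finite-dimensional spaces, and the defining property of $P_F'$ (resp. $Q_F'$) as coming from \autoref{fromO3}/\autoref{minisind} makes the required equalities hold. I expect the main obstacle to be purely bookkeeping: one must be careful that \emph{simultaneously} contracting a vector-side independent set and deleting a covector-side coindependent set does not interfere — i.e. that the witnesses produced by \autoref{O3pres} on the $V$ side remain valid after also deleting $Q_F$, and symmetrically. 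This should follow because a $\Pi/X$-independent set stays independent after further deletions in the complement, but spelling out the interaction between the contraction set and the deletion set (and making sure neither $P_F$ nor $Q_F$ meets $F$ or the other) is where the care is needed; the use of \autoref{fromO3} rather than raw \autoref{O3pres} streamlines exactly this point.
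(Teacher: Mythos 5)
There is a genuine gap, and it occurs at the two load-bearing points of your plan. First, the finiteness of $P_F'$ and $Q_F'$ is unjustified: \autoref{O3pres} and \autoref{minisind} only give a vector $v$ with $\supp(v)\setminus X$ \emph{minimal} under inclusion (and that minimal set $\Pi/X$-independent); minimal sets can be infinite, since vectors and covectors of a presentation may have infinite support. So the ``finite set'' you take unions of need not be finite, and nothing in \autoref{fromO3} repairs this. Second, even granting finite $P_F'$ and $Q_F'$, the step where you dump the cofinite remainder of $E\setminus F$ entirely into the contraction side and claim that independence of $P_F'$ means ``this extra bulk contraction changes nothing on $F$'' is false. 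Contracting almost everything outside $F$ generally enlarges the restriction (already $(V/(E\setminus F))\restric_F = V.F$, which typically strictly contains $V\restric_F$), and on the covector side every $w\in W$ whose support meets $P_F$ is destroyed in $W\backslash P_F/Q_F$; if some basis element of $W.F$ has only lifts of infinite support, all its witnesses meet your cofinite $P_F$, so $W.F\not\subseteq(W\backslash P_F).F$ and $\Pi'\restric_F\neq\Pi\restric_F$. Independence of $P_F'$ protects against neither effect.

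The idea you are missing is that $P_F$ and $Q_F$ need not be finite, independent, or produced by any (O3)-type principle; only their complement must be finite, and that finiteness comes from orthogonality (tameness), not from (O3). The paper's proof lifts finite \emph{linear} bases $B_V$ of $V.F$ and $B_W$ of $W.F$ to $\hat v\in V$ and $\hat w\in W$; since $\hat v\perp\hat w$ forces $\supp(\hat v)\cap\supp(\hat w)$ to be finite, the set $F' = F\cup\bigl(\bigcup_{v\in B_V}\supp(\hat v)\cap\bigcup_{w\in B_W}\supp(\hat w)\bigr)$ is finite. Taking $P_F=\bigcup_{v\in B_V}\supp(\hat v)\setminus F'$ and $Q_F=E\setminus(P_F\cup F')$, the lifts $\hat v$ avoid $Q_F$ and the lifts $\hat w$ avoid $P_F$, so $V.F$ and $W.F$ are unchanged; then \autoref{finpres} (a presentation on the finite set $F$ is determined by either component, being a pair of orthogonal complements) yields both $\Pi'.F=\Pi.F$ and $\Pi'\restric_F=\Pi\restric_F$. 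Your instinct to reduce to \autoref{finpres} on a finite set is right, but \autoref{fromO3} is the tool for the later recursion in \autoref{IMstar}, not for this lemma.
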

\begin{proof}
Let $B_V$ be a (linear) basis of $V.F$ and $B_W$ a (linear) basis of $W.F$. For each $v \in B_V$, choose some $\hat v \in V$ with $\hat v \restric_F = v$. Similarly, for each $w \in B_W$ choose some $\hat w \in W$ with $\hat w \restric_F = w$. Let $F' = F \cup \left[\left(\bigcup_{v \in B_V} \supp(\hat v) \right) \cap \left( \bigcup_{w \in B_W} \supp(\hat w) \right)\right]$, which is finite because it is the union of $F$ with a finite union of sets of the form $\supp(\hat v) \cap \supp(\hat w)$. Let $P_F = \bigcup_{v \in B_V} \supp(\hat v) \setminus F'$, and $Q_F = E \setminus (P_F \cup F')$. Thus $P_F$ and $Q_F$ are disjoint, and $E \setminus (P_F \cup Q_F) = F'$ is finite.

For each $v \in B_V$, we have $\supp(\hat v) \subseteq E \setminus Q_F$, so $v \in (V \setminus Q_F).F$. Thus $V.F \subseteq (V \setminus Q_F).F$. It is clear that the reverse inclusion $(V \setminus Q_F).F \subseteq V.F$ also holds, and so $(V \setminus Q_F).F = V.F$. Since by \autoref{finpres} any presentation on a finite set is determined by its set of vectors, we may deduce that $\Pi'.F = (\Pi \setminus Q_F).F = \Pi.F$. The proof that $\Pi' \restric_F = \Pi \restric_F$ is similar.
\end{proof}

Using this, we can now obtain the lemma which will be applied at each node:

\begin{lem}\label{IMstar}
Let $\Pi$ be a presentation on a set $E$ which is stellar with respect to a set $\Fcal$ of disjoint subsets of $E$. Let $\Tcal = (T, \overline V, \overline W)$ be a tree of presentations, where $T$ is a star with central node $*$ and leaf set $\Fcal$, and $(\overline V(*), \overline W(*)) = \Pi$ and for each $F \in \Fcal$ we have $E(*F) = F$. Let $E' = E \setminus \bigcup \Fcal$. Let $X$ and $Y$ be disjoint subsets of $E(\Tcal)$ such that $X$ is $S(V(\Tcal))$-independent and $Y$ is $S(W(\Tcal))$-independent. Then there are disjoint subsets $X'$ and $Y'$ of $E(\Tcal)$ extending $X$ and $Y$ respectively such that:

\begin{itemize}
\item $E' \subseteq X' \cup Y'$
\item $X'$ is $S(V(\Tcal))$-independent and $Y'$ is $S(W(\Tcal))$-independent.
\item For any $e \in E' \setminus X'$ there is some $C \in S(V(\Tcal))$ with $e \in C \subseteq X' + e$.
\item For any $e \in E' \setminus Y'$ there is some $D \in S(W(\Tcal))$ with $e \in D \subseteq Y' + e$.
\item There do not exist leaves $F, F' \in \Fcal$ such that there is a connected component of $\Pi(\Tcal)/X' \backslash Y'$ meeting both $E(F)$ and $E(F')$. 
\end{itemize}
\end{lem}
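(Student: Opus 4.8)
The plan is to build $X'$ and $Y'$ in two stages: first construct a large independent pair over the central presentation $\Pi$ alone, handling the ground set $E'$ and the crucial last bullet about connected components; then extend into the leaves. For the first stage, I would apply \autoref{pickPQ} at each leaf $F\in\Fcal$ to replace the branch structure $E(F)$ by the finite trace presentation: $\Pi(F)$ restricted to $F$ carries exactly the information about which subsets of $F$ are $S(V(\Tcal))$-independent "from the leaf side''. More precisely, the point is that the orthogonality system $(S(V(\Tcal)),S(W(\Tcal)))$ restricted to $E'\cup\bigcup\Fcal$, after contracting/deleting appropriately inside each leaf, is an orthogonality system on a set that is $E'$ together with finitely many extra elements per leaf; it is stellar by \autoref{stellareg} (minors of stellar presentations are stellar) and hence presents a matroid $N$ on that finite-per-leaf ground set. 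Since $N$ is (the orthogonality system of) a presentation that presents a matroid, it satisfies (IM), so I can extend $X$ to a base $X'_0$ of $E'$-relative-to-this-matroid inside $N$ while keeping $Y$ avoidable, using \autoref{base_char} to get the circuit/cocircuit witnesses.

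The key subtlety is the last bullet, which asks that no connected component of $\Pi(\Tcal)/X'\backslash Y'$ meets two distinct leaf sets $E(F),E(F')$. I would arrange this by being greedy about $X'$: whenever an element $e\in E'$ could be placed in $X'$ without creating a dependency, put it in $X'$; only put $e\in Y'$ when forced. The claim is that in the resulting minor, every connected component meeting some $E(F)$ must be "absorbed'' — i.e. after contracting $X'$ and deleting $Y'$, the leaves become pairwise separated. This is where I expect the main obstacle to lie, and the right tool is \autoref{connected_component}: a component of an orthogonality system is determined by the minimal circuits (equivalently minimal cocircuits) through its elements. If a minimal circuit $o$ of $\Pi(\Tcal)/X'\backslash Y'$ met both $E(F)$ and $E(F')$, it would have to pass through $E'$ (since leaf ground sets only overlap at the centre), and by the greedy choice every such $E'$-element is in $X'$, so contracting $X'$ would have already split $o$; one has to check this carefully using that $o$ minimal in the contraction lifts to a circuit of $\Pi(\Tcal)$ using only $o\cup X'$, and then use \autoref{O3pres}/\autoref{fromO3} to trim the $X'$-part down to an independent set, deriving a contradiction with independence of $X'$. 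The parallel statement for cocircuits and the symmetry between the $\Psi$-vector and $\Psi\ct$-covector sides is handled dually; note the first four bullets are symmetric in $X'/V$ versus $Y'/W$, so once the circuit witnesses are produced the cocircuit witnesses come from applying the same argument to the dual orthogonality system.

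For the second stage, I extend $X'_0\cup Y$ into the leaves: for each leaf $F$, I now have fixed the trace on $E(F)$, and I must choose the part of $X'$ and $Y'$ lying in $E(F)\setminus F$. Since $\Pi(F)$ is a presentation on a finite set (so presents a matroid by \autoref{finpres}, hence satisfies (IM)), and since the trace on $F$ is already determined, I can independently in each leaf extend to a base of $E(F)$ relative to the appropriate contraction/restriction, producing the required circuit and cocircuit witnesses locally; these local circuits/cocircuits glue via \autoref{change_a-little}-style reasoning (or directly, since $F$ is a separator) to circuits/cocircuits of $\Pi(\Tcal)$. Because each leaf is handled in isolation and $E(F)\setminus F$ cannot belong to a component also touching another leaf (the only shared elements are in $F$, which is entirely inside $X'\cup Y'$ plus the finitely many new elements), the last bullet is preserved. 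Finally I take $X'=\bigcup_F X'_F$ and $Y'=\bigcup_F Y'_F$ together with $X'_0,Y'$; the global independence of $X'$ (that it contains no nonempty member of $S(V(\Tcal))$) follows because any $\Psi$-pre-vector with empty end set has finite support tree by König, so lives in finitely many leaves, and within those the traces were kept independent — this is the routine verification I would not grind through here, but it is exactly the place \autoref{finpres} and the neatness consequence of \autoref{stellareg} are used.
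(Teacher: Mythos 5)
Your first stage does match the paper's opening moves: finitise each leaf with \autoref{pickPQ}, observe that the resulting $\Fcal$-star around $\Pi$ presents a matroid because $\Pi$ is stellar (this is the definition of stellarity for $\Pi$, not an appeal to \autoref{stellareg}), and pick a base $B$ extending $X$ and avoiding $Y$. But the argument you give for the last bullet is based on a false claim. A circuit of $\Pi(\Tcal)/X'\backslash Y'$ meeting both $E(F)$ and $E(F')$ need not pass through $E'$ at all: the leaves are attached to the centre through the dummy edges in $\bigcup\Fcal$, which do not lie in $E(\Tcal)$ and never appear in the support of a pre-vector, so the central vector $\overline v(*)$ of the offending pre-vector can be supported entirely inside $\bigcup\Fcal\cup X'$. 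Hence ``every $E'$-element on $o$ is in $X'$, so contracting $X'$ splits $o$'' does not apply, and no greedy placement of elements of $E'$ into $X'$ enforces the separation. In the paper the separation comes from the choice of the sets inside the leaves: using \autoref{fromO3} one picks $X_F\se P_F\cup(B\cap E(F))$ and $Y_F\se Q_F\cup(B'\cap E(F))$ so that $(\Pi_F\backslash Y_F).F=(\Pi_F/X_F)\restric_F$; if a minimal $C$ met two leaves, the leaf-$F$ part of its pre-vector could then be replaced by a vector supported in $X_F\cup F$, and subtracting yields a nonempty element of $S(V(\Tcal)/X'\backslash Y')$ contained in $C$, containing $e$ but not $e'$, contradicting minimality. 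This leaf-level choice, which your proposal omits, is the actual mechanism behind the final bullet.

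Your second stage has a further gap: the leaf ground sets $E(F)$ are not assumed finite (only the overlaps $F=E(*F)$ are, by the definition of a tree of presentations), so $\Pi(F)$ is just a presentation — it need not present a matroid nor satisfy (IM) — and you cannot ``extend to a base of $E(F)$'' inside each leaf. Nor is that needed, since the lemma only requires $X'\cup Y'$ to cover $E'$, not the leaves. Moreover, the independence of $X'$ is not the routine verification you defer: the paper proves it by converting any pre-vector supported in $X'$ into a pre-vector of the finitised star with support inside $B$ (using the defining property of $P_F,Q_F$ from \autoref{pickPQ}), forcing that support to be empty, and then contradicting the independence of the $X_F$; with leaf parts chosen as ``bases of the leaves'' rather than via \autoref{fromO3}, this conversion is not available and the claimed independence of $X'$ is unsupported.
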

\begin{proof}
For each $F \in \Fcal$ we will denote the presentation $(\overline V(F), \overline W(F))$ by $\Pi_F$.
By \autoref{minors} we may assume without loss of generality that $X$ and $Y$ are subsets of $E'$.

We begin by picking, for each $F \in \Fcal$, sets $P_F$ and $Q_F$ as in \autoref{pickPQ} for the finite subset $F$ of $E(F)$. Let $\tilde V(F) = \overline V(F)/P_F\backslash Q_F$ and $\tilde W(F) = \overline W(F) \backslash P_F / Q_F$. Taking $\tilde V(*) = V_{\Pi}$ and $\tilde W(*) = W_{\Pi}$ we get an $\Fcal$-star $\tilde \Tcal = (T, \tilde V, \tilde W)$ of presentations around $\Pi$. By construction, $X$ is $S(V(\tilde \Tcal))$-independent and $Y$ is $S(W(\tilde \Tcal))$-independent. Since $\Pi$ is stellar, we can choose a base $B$ extending $X$ and disjoint from $Y$ for the matroid $M$ presented by $\Pi(\tilde \Tcal)$: let $B'$ be the base of the dual matroid $M^*$ given by taking the complement of $B$. For each $F \in \Fcal$, let $X_F$ be an independent subset of $P_F \cup (B \cap E(F))$ such that $(\Pi_F/X_F)\restric_F = (\Pi_F/(P_F \cup (B \cap E(F))))\restric_F$ as in \autoref{fromO3} and let $Y_F$ be a coindependent subset of $Q_F \cup (B' \cap E(F))$ such that $(\Pi_F\backslash Y_F).F = (\Pi_F\
backslash (Q_F \
\cup (B' \cap E(F)))).F$. Note that $(\Pi_F \backslash Y_F).F = (\Pi_F / X_F) \restric_F$. Let $X' = (B \cap E') \cup \bigcup_{F \in \Fcal}X_F$ and $Y' = (B' \cap E') \cup \bigcup_{F \in \Fcal}Y_F$. It is clear that $X'$ and $Y'$ are disjoint, cover $E'$, and respectively extend $X$ and $Y$. 

Now suppose for a contradiction that $X'$ is $S(V(\Tcal))$-dependent. Then there is some $\Tcal$-prevector $(S, \hat v)$ whose support $C$ is nonempty and included in $X'$. The tree $S$ cannot consist of just a single leaf of $T$ by independence of of the sets $X_F$, so it must contain $*$. For each leaf $F$ of $T$ in $S$, we have $\hat v(*) \restric_F \in \overline V(F)/X_F$, so by the definition of $X_F$ we have $\hat v(*) \in (\overline V(F)/(P_F \cup (B \cap E(F))))\restric_F$, that is, there is some vector $\hat v'(F)$ of $\tilde V(F)$ whose support is included in $(B \cap E(F)) \cup F$ and with $\hat v'(F) \restric_F = \hat v(*) \restric_F$. Letting $\hat v'(*) = \hat v(*)$, we obtain a $\tilde \Tcal$-prevector $(S, \hat v')$ whose support is included in $B$, and so must be empty. So for each leaf $F$ of $T$ in $S$ we have $\hat v'(F) \in k^F$ and so, by our choice of $P_F$ and $Q_F$, $\hat v'(F) \in \overline V(F)$, so since $\hat v'(F) = \hat v(*) \restric_F$ we have 
$\hat v(*) \restric_F \in \overline V(F)$. Also, 
$C$ cannot meet $E'$, so since $C$ is nonempty there is some leaf $F$ of $T$ in $S$ for which the support of $\hat v(F)$ isn't a subset of $F$. Then $\hat v(F) - \hat v(*) \restric_{F}$ is a vector in $\overline V(F)$ whose support is nonempty and included in $X_F$, contradicting the independence of $X_F$.

This shows that $X'$ is $S(V(\Tcal))$-independent, and a dual argument shows that $Y'$ is $S(W(\Tcal))$-independent.

Next we will show that for any $e \in E' \setminus X'$ there is some $C \in S(V(\Tcal))$ with $e \in C \subseteq X' + e$. Since $e \in B'$, there is some circuit $C_0$ of $M$ with $e \in C_0 \subseteq B + e$. Let $(S, \hat v)$ be a $\tilde \Tcal$-prevector with support $C_0$. Then for each leaf $F$ of $T$ in $S$, we have $\hat v(*) \restric_F \in (\overline V(F)/(P_F \cup (B \cap E(F))))\restric_F = (\overline V(F)/X_F)\restric_F$, so that there is some $\hat v'(F) \in \overline V(F)$ with $\supp(\hat v'(F)) \subseteq X_F \cup F$ and $\hat v'(F) \restric_F = \hat v(*) \restric_F$. Letting $\hat v '(*) = \hat v(*)$, we get a $\Tcal$-prevector $(S, \hat v')$ whose support is the desired $C$. A dual argument shows that for any $e \in E' \setminus Y'$ there is some $D \in S(W(\Tcal))$ with $e \in D \subseteq Y' + e$.

It remains to prove the final condition of the Lemma. Suppose for a contradiction that this condition fails, and let $F\in \Fcal$ such that there is a connected component of $\Pi(\Tcal)/X' \backslash Y'$ containing some edge $e$ of $E(F)$  and some edge $e'$ of $E(F')$ for some $F' \neq F$. Let $C$ be a minimal nonempty element of $S(V(\Tcal)/X' \backslash Y')$ containing both $e$ and $e'$, and let $(S, \hat v)$ be a $\Tcal$-prevector whose support includes $C$ but is a subset of $C \cup X'$. Both $F$ and $*$ must be in $S$. Then the support of $\hat v(F)$ can't meet $Y_F$, so $\hat v(F) \restric_F$ is a vector of $(\overline V(F) \backslash Y_F).F = (\overline V(F) / X_F) \restric_F$, so that there is some $v \in \overline V(F)$ with $\supp(v) \subseteq X_F \cup F$ and $v \restric_F = \hat v(F) \restric_F$. Then $(\{F\}, F \mapsto v(F) - v)$ is a $\Tcal$-prevector whose support is a subset of $C \cup X'$ containing $e$ but not $e'$, contradicting the minimality of $C$. This completes the proof.
\end{proof}

We now apply this lemma recursively to build the necessary bases. We will need a little notation for our recursive construction. For any tree $T$ and directed edge $st$ of $T$, let $T_{s \to t}$ be the subtree of $T$ on the set of vertices $u$ for which the unique path from $s$ to $u$ in $T$ contains $t$. For $\Tcal = (T, \bar V, \bar W)$ a tree of presentations and $st$ a directed edge of $G$, let $\Tcal_{s \to t}$ be the tree of presentations $(T_{s \to t}, \bar V\restric_{T_{s \to t}}, \bar W \restric_{T_{s \to t}})$.

\begin{thm}
Let $\Tcal = (T, \overline V, \overline W)$ be a stellar tree of presentations, and let $\Psi$ be a Borel set of ends of $\Tcal$. Then $\Pi_{\Psi}(\Tcal)$ presents a matroid.
\end{thm}
\begin{proof}
We have already shown that $\Pi_{\Psi}(\Tcal)$ is a presentation. Indeed, our results so far show that for each edge $tt'$ of $\Tcal$ the pair $\Pi_{\Psi}(\Tcal_{t \to t'})$ is a presentation.

It remains to show that $S(V_{\Psi}(\Tcal))$ satisfies (SM), for which by \autoref{minors} and \autoref{base} it is enough to show that there is some partition of $E(\Tcal)$ into a base $X$ and a cobase $Y$, that is, $Y$ is a subset of the $S(V_{\Psi}(\Tcal))$-span of $X$ and $X$ is a subset of the $S(W_{\Psi})(\Tcal))$-span of $Y$. We build $X$ and $Y$ recursively. More precisely, we pick a root $t_0$ for $T$ and order the vertices of $T$ by the tree order $\leq$ with respect to this root. This is a well-founded order, and we construct subsets $X_t$ and $Y_t$ of $E(\Tcal)$ for each node $t$ of $T$ by recursion over $\leq$ such that:

\begin{enumerate}
\item $X_t$ and $Y_t$ are disjoint.
\item $X_t \subseteq X_{t'}$ and $Y_t \subseteq Y_{t'}$ for $t \leq t'$.
\item $X_{t'} \setminus X_{t} \subseteq E(\Tcal_{t \to t'})$ and  $Y_{t'} \setminus Y_{t} \subseteq E(\Tcal_{t \to t'})$ for any edge $tt'$ of $T$ with $t \leq t'$.
\item $E(t) \cap E(\Tcal) \subseteq X_t \cup Y_t$.
\item $X_t$ is $S(V_{\Psi}(\Tcal))$-independent and $Y_t$ is $S(W_{\Psi}(\Tcal))$-independent.
\item For any $e \in E(t) \cap E(\Tcal) \setminus X_t$ there is some $C \in S(V(\Tcal))$ with $e \in C \subseteq X_t + e$
\item For any $e \in E(t) \cap E(\Tcal) \setminus Y_t$ there is some $D \in S(W(\Tcal))$ with $e \in D \subseteq Y_t + e$
\item There is no edge $tt'$ of $T$ with $t \leq t'$ such that there is a connected component of $\Pi(\Tcal)/X_t \backslash Y_t$ meeting both $E(\Tcal_{t \to t'})$ and $E(\Tcal_{t' \to t})$.
\end{enumerate}
If we can find such $X_t$ and $Y_t$ then the sets $X = \bigcup_{t \in V(T)} X_t$ and $Y = \bigcup_{t \in V(T)}(Y_t)$ will give the base and cobase we require: they will be disjoint by conditions 1, 2 and 3, will cover by condition 4 and will be respectively spanning and cospanning by conditions 6 and 7. It remains to show that this recursive construction can be carried out.

We construct $X_{t_0}$ and $Y_{t_0}$ by applying \autoref{IMstar} to the star of presentations with central node $\Pi(t_0)$ and with a leaf for each neighbour $t$ of $t_0$ in $T$ labelled with the presentation $\Pi_{\Psi}(\Tcal_{t_0 \to t})$, and taking $X = Y = \emptyset$.

The construction of $X_t$ and $Y_t$ for $t \neq t_0$ is very similar. Let $s$ be the predecessor of $t$ in the tree order. Let $E'$ be the set $E(t) \cap E(\Tcal)$ of real edges of $E(t)$. Let $T'$ be the subtree $T_{t\to s}+t$ of $T$. Let $\Pi = (\Pi_{\Psi}(\Tcal') / (X_s \cap E(\Tcal_{t \to s}))) \restric_{E(t) \setminus E(st)}$. Note that by condition 8 applied at $s$ we also have $\Pi = (\Pi_{\Psi}(\Tcal') \backslash (Y_s \cap E(\Tcal_{t \to s}))).(E(t) \setminus E(st))$. Then we build $X'$ and $Y'$ by applying \autoref{IMstar} to the star of presentations with central node $\Pi$ and with a leaf for each successor $t'$ of $t$ in $T$ labelled with the presentation $\Pi_{\Psi}(\Tcal_{t \to t'})$. We take the $X$ and $Y$ of the Lemma to be the intersections of $X_s$ and $Y_s$ with $E(\Tcal_{s \to t})$ respectively. Finally, we let $X_t = X_s \cup X'$ and $Y_t = Y_s \cup Y'$.
\end{proof}

\bibliographystyle{plain}
\bibliography{literatur}

\begin{thebibliography}{10}

\bibitem{THINSUMS}
H.~Afzali and N.~Bowler.
\newblock Thin sums matroids and duality.
\newblock To appear in Adv. Math, available at
  {http://arxiv.org/pdf/1204.6294}‎.

\bibitem{BC:rep_matroids}
N.~Bowler and J.~Carmesin.
\newblock An excluded minors method for infinite matroids.
\newblock Preprint 2012, current version available at
  {http://arxiv.org/pdf/1212.3939v1}.

\bibitem{BC:determinacy}
N.~Bowler and J.~Carmesin.
\newblock Infinite matroids and determinacy of games.
\newblock Preprint 2013, current version available at
  {http://arxiv.org/abs/1301.5980}.

\bibitem{BC:wild_matroids}
N.~Bowler and J.~Carmesin.
\newblock Matroids with an infinite circuit-cocircuit intersection.
\newblock to appear in J.~Combin.\ Theory (Series B), current version available
  at {http://www.math.uni-hamburg.de/spag/dm/papers/WildMatroids.pdf}.

\bibitem{BC:ubiquity}
N.~Bowler and J.~Carmesin.
\newblock The ubiquity of psi-matroids.
\newblock Preprint 2012, current version available at {\small\tt
  http://www.math.uni-hamburg.de/spag/dm/papers/ubiquity\_psi\_final.pdf}.

\bibitem{BCC:graphic_matroids}
N.~Bowler, J.~Carmesin, and R.~Christian.
\newblock Infinite graphic matroids.
\newblock Preprint 2013, current version available at\\ {\small\tt
  http://arxiv.org/abs/1309.3735}.

\bibitem{RD:HB:graphmatroids}
Henning Bruhn and Reinhard Diestel.
\newblock Infinite matroids in graphs.
\newblock {\em Discrete Math.}, 311(15):1461--1471, 2011.

\bibitem{matroid_axioms}
Henning Bruhn, Reinhard Diestel, Matthias Kriesell, Rudi Pendavingh, and Paul
  Wollan.
\newblock Axioms for infinite matroids.
\newblock {\em Adv. Math.}, 239:18--46, 2013.

\bibitem{C:undom_td}
J.~Carmesin.
\newblock On the end structure of infinite graphs.
\newblock Preprint 2014, available at `www.arXiv.org'.

\bibitem{DiestelBook10}
R.~Diestel.
\newblock {\em Graph {T}heory \emph{(4th edition)}}.
\newblock Springer-Verlag, 2010.
\newblock \\ Electronic edition available at:\\ {\small\tt
  http://diestel-graph-theory.com/index.html}.

\bibitem{DP:dualtrees}
R.~Diestel and J.~Pott.
\newblock Dual trees must share their ends.
\newblock Preprint 2011, current version available at
  {http://www.math.uni-hamburg.de/home/diestel/papers/Dualtrees.pdf}.

\bibitem{Oxley}
J.~Oxley.
\newblock {\em {M}atroid {T}heory}.
\newblock Oxford University Press, 1992.

\end{thebibliography}
\end{document}